\theoremstyle{plain}
\newtheorem{thm}{Th\'eor\`eme}[section]
\newtheorem{corol}[thm]{Corollaire}
\newtheorem{lemme}[thm]{Lemme}
\newtheorem{prop}[thm]{Proposition}
\newtheorem{question}[thm]{Question}
\newtheorem{conjecture}[thm]{Conjecture}
\theoremstyle{definition}
\newtheorem{definition}[thm]{D\'efinition}
\newtheorem{rem}[thm]{Remarque}
\newtheorem{ex}[thm]{Exemple}
\newtheorem{nota}[thm]{Notation}
\newtheorem{nota-rem}[thm]{Notation-Remarque}
\DeclareMathOperator{\ext}{ext}
\DeclareMathOperator{\id}{id}
\DeclareMathOperator{\GL}{GL}
\DeclareMathOperator{\End}{End}
\DeclareMathOperator{\Nef}{Nef}
\DeclareMathOperator{\Psef}{Psef}
\DeclareMathOperator{\Semi}{Semi}
\DeclareMathOperator{\N}{N}
\DeclareMathOperator{\ZZ}{\mathbb{Z}}
\DeclareMathOperator{\rg}{rang}
\DeclareMathOperator{\extr}{ext}
\DeclareMathOperator{\Sym}{\mathbf{S}}
\DeclareMathOperator{\Symm}{Sym}
\DeclareMathOperator{\G}{G}
\DeclareMathOperator{\Hg}{Hg}
\DeclareMathOperator{\Sp}{Sp}
\DeclareMathOperator{\QQ}{\mathbb{Q}}
\DeclareMathOperator{\SL}{SL}
\DeclareMathOperator{\cone}{cone}
\DeclareMathOperator{\conv}{conv}
\DeclareMathOperator{\im}{im}
\DeclareMathOperator{\C}{\mathcal{C}}
\DeclareMathOperator{\Strong}{Strong}
\DeclareMathOperator{\transpose}{{}^t \!}  
\DeclareMathOperator{\Mat}{Mat} 
\DeclareMathOperator{\NN}{\mathbb{N}}
\DeclareMathOperator{\can}{can}
\DeclareMathOperator{\CC}{\mathbb{C}}
\DeclareMathOperator{\OO}{O}
\DeclareMathOperator{\RR}{\mathbb{R}}
\DeclareMathOperator{\Hom}{Hom}
\begin{document}

\title{Cycles positifs dans les vari\'et\'es ab\'eliennes}
\date{}
\author{Max Rempel}
\maketitle

\selectlanguage{french}
\begin{abstract}
 Dans la premi\`ere partie, on \'etudie
la structure de la $\RR$-alg\`ebre engendr\'ee par les classes de Hodge sur une puissance $A^e$ d'une vari\'et\'e ab\'elienne principalement polaris\'ee tr\`es g\'en\'erale $A$ de dimension $n$.
La deuxi\`eme partie est consacr\'ee \`a la comparaison de diverses notions de positivit\'e pour des cycles de codimension sup\'erieure dans $A^e$.  
En particulier, on montre qu'il y a, en toute codimension $2 \le k \le en-2$, des
classes num\'eriquement effectives qui ne sont pas pseudoeffectives, ce qui g\'en\'eralise un r\'esultat de Debarre, Ein, Lazarsfeld et Voisin.
\end{abstract}

\selectlanguage{english}
\begin{abstract}
 In the first part, we study the structure of the $\RR$-algebra generated by the Hodge classes on the self-product $A^e$ of a very general
principally polarized abelian variety $A$. In the second part, we compare various notions of positivity for cycles of higher codimension in $A^e$. In particular,
we prove that, in every codimension $2 \le k \le en-2$, there exist classes that are numerically effective but not pseudoeffective, which generalises a result
of Debarre, Ein, Lazarsfeld and Voisin.
\end{abstract}
\selectlanguage{french}

\tableofcontents

\section{Introduction}
Soit  $B$ une vari\'et\'e ab\'elienne complexe de dimension $m$. Notons
$\N^k(B)$ le $\RR$-espace vectoriel des classes de cohomologie de
cycles de codimension $k$ \`a coefficients r\'eels sur $B$ et posons
\[N^\bullet(B)=\bigoplus_{k \ge 0} \N^k(B).\]
Le produit d'intersection munit ce $\RR$-espace vectoriel d'une structure de $\RR$-alg\`ebre.

Soit $A$ une vari\'et\'e ab\'elienne principalement polaris\'ee tr\`es g\'en\'erale de dimension fix\'ee.
Le but de cet article est d'\'etudier la structure de l'alg\`ebre $\N^\bullet(A^e)$ et de divers c\^ones
de classes \og positives \fg \hskip 1mm dans $\N^k(A^e)$.

\begin{question}
\label{question_structure}
 Que peut-on dire de la structure de la $\RR$-alg\`ebre $\N^\bullet(A^e)$?
\end{question}

Il est connu que l'alg\`ebre $\N^\bullet(A^e)$ est engendr\'ee
par $\N^1(A^e)$ (cf. \cite{Tankeev} ou th\'eor\`eme \ref{Tankeev}). En utilisant l'action naturelle
de $\GL_e(\RR)$ sur $\N^\bullet(A^e)$, on montre
le th\'eor\`eme suivant (th\'eor\`eme \ref{thm_Thompson} et corollaire \ref{corol_rel_explicites}).

\begin{thm}
Soit $A$ une vari\'et\'e ab\'elienne principalement polaris\'ee tr\`es g\'en\'erale de dimension $n$.
L'id\'eal $I$ tel que 
\[\N^\bullet(A^e)=\Sym^\bullet \N^1(A^e)/I,\]
i.e., l'id\'eal des relations dans $\Sym^\bullet \N^1(A^e)$,  
est engendr\'e par des classes de cycles de codimension $n+1$ et l'on peut en d\'ecrire des g\'en\'erateurs explicitement.
En particulier, l'application
\[\Sym^k \N^1(A^e) \to \N^k(A^e)\]
est un isomorphisme si et seulement si $k \in \{0, \dots, n\}$.
\end{thm}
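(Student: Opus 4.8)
La stratégie est d'identifier chaque terme gradué $\N^k(A^e)$ comme un espace d'invariants sous le groupe de Hodge, puis de comparer sa décomposition en représentations de $\GL_e(\RR)$ avec celle de $\Sym^k\N^1(A^e)$. Comme $A$ est très générale, on a $\End(A)=\ZZ$ et le groupe de Hodge de $A$ est le groupe symplectique tout entier $\Sp(W)$, où $W=H^1(A,\RR)$ est de dimension $2n$, muni de la forme symplectique $\theta$ issue de la polarisation. Puisque $H^1(A^e,\RR)=W\otimes V$ avec $V=\RR^e$, le théorème \ref{Tankeev} (l'algèbre des classes de Hodge, qui coïncide ici avec $\N^\bullet(A^e)$, est engendrée par les diviseurs) identifie
\[\N^k(A^e)=\Bigl(\textstyle\bigwedge^{2k}(W\otimes V)\Bigr)^{\Sp(W)},\]
espace sur lequel $\GL_e(\RR)=\GL(V)$ agit naturellement. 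En degré $1$ on obtient $\N^1(A^e)=\theta\otimes\Sym^2 V\cong\Sym^2 V$, l'espace des matrices symétriques $e\times e$ sur lequel $\GL_e$ agit par congruence, et l'application multiplication $\Sym^k\N^1(A^e)\to\N^k(A^e)$ est $\GL_e$-équivariante et surjective.

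Je décomposerais ensuite les deux membres en $\GL_e$-représentations irréductibles $\Sym^\lambda V$ (foncteurs de Schur, de plus haut poids $\lambda$). Par le pléthysme classique de Littlewood,
\[\Sym^k(\Sym^2 V)=\bigoplus_{\substack{\lambda\vdash 2k,\ \lambda\ \mathrm{pair}\\ \ell(\lambda)\le e}}\Sym^\lambda V,\]
la somme portant sur les partitions paires (toutes les parts paires) de $2k$ à au plus $e$ parts. D'autre part, la dualité de Howe antisymétrique pour $\bigwedge(W\otimes V)$ écrit ce dernier comme $\bigoplus_\mu \Sym^\mu W\otimes\Sym^{\mu'}V$, $\mu$ parcourant les partitions contenues dans le rectangle $2n\times e$ ; la règle de restriction $\GL(W)\downarrow\Sp(W)$ (le trivial de $\Sp(W)$ apparaît dans $\Sym^\mu W$ si et seulement si la conjuguée $\mu'$ est paire) donne alors, en posant $\lambda=\mu'$,
\[\N^k(A^e)=\bigoplus_{\substack{\lambda\vdash 2k,\ \lambda\ \mathrm{pair}\\ \ell(\lambda)\le e,\ \lambda_1\le 2n}}\Sym^\lambda V.\]
La seule différence entre les deux décompositions est la contrainte $\lambda_1\le 2n$, provenant de $\dim W=2n$.

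Ces deux sommes étant sans multiplicité, le lemme de Schur montre que la surjection $\Sym^k\N^1\to\N^k$ est la projection qui tue exactement les facteurs $\Sym^\lambda V$ avec $\lambda_1\ge 2n+2$ ; la composante de degré $k$ de l'idéal $I$ est donc $\bigoplus\Sym^\lambda V$ sur les $\lambda$ pairs, $\ell(\lambda)\le e$, $\lambda_1\ge 2n+2$. Ce noyau est nul si et seulement si aucune partition paire de $2k$ n'a de part $>2n$ ; comme $(2k)$ est toujours une telle partition, cela équivaut à $2k\le 2n$, soit $k\le n$, ce qui donne l'assertion sur les isomorphismes. En degré $n+1$, les conditions $|\lambda|=2n+2$ et $\lambda_1\ge 2n+2$ forcent $\lambda=(2n+2)$ ; la composante de $I$ en codimension $n+1$ est donc l'unique irréductible $\Sym^{(2n+2)}V$, d'où les générateurs explicites annoncés.

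L'obstacle principal est le dernier point : montrer que $I$ est engendré par cette composante de codimension $n+1$. Il suffit pour cela de vérifier que toute composante $\Sym^\lambda V$ avec $\lambda$ pair, $\ell(\lambda)\le e$ et $\lambda_1\ge 2n+2$ apparaît dans l'image du produit $\Sym^{(2n+2)}V\cdot\Sym^{\bullet}(\Sym^2 V)$ ; la comparaison de dimensions ci-dessus montrerait alors que l'idéal engendré coïncide avec le noyau tout entier. C'est une question de type Pieri / Littlewood–Richardson — s'assurer que la multiplication par $\Sym^{(2n+2)}V$ atteint toute partition paire dont la première part dépasse $2n$ — qui demande un contrôle combinatoire précis des produits de foncteurs de Schur ; c'est là qu'interviennent le théorème \ref{thm_Thompson} et le corollaire \ref{corol_rel_explicites}. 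Les étapes d'identification et de décomposition sont relativement directes, mais c'est la génération en codimension $n+1$ qui constitue le cœur de la démonstration.
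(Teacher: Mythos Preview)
Your proposal is correct and follows essentially the same route as the paper: identify $\N^k(A^e)$ as $\Sp$-invariants, decompose both $\Sym^k(\Sym^2 V)$ and $\N^k(A^e)$ into Schur functors indexed by even partitions (the paper cites Thompson and Weyman where you invoke Howe duality and Littlewood's plethysm directly, but the content is the same), and then isolate the generation in codimension $n+1$ as the crux, which the paper likewise does not reprove but imports from Abeasis (proposition~\ref{prop_abeasis}) and Thompson (th\'eor\`eme~\ref{thm_Thompson}). Note that your $V$ and $W$ are swapped relative to the paper's notation, but the mathematics is identical.
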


La deuxi\`eme partie de l'article est consacr\'ee \`a la question suivante, qui a \'et\'e abord\'ee dans \cite{DELV} pour $e=2$.
\begin{question}
\label{question_pos}
 Que peut-on dire des classes \og positives \fg \hskip 1mm dans $\N^k(A^e)$?
\end{question}

Pour pr\'eciser cette question, on rappelle d'abord diverses notions de positivit\'e qui donnent chacune lieu \`a un c\^one convexe dans $\N^k(B)$
(cf. \cite[\S 1]{DELV} et \cite[Ch. III]{Demailly}). 

\begin{enumerate}
 \item Le c\^one des classes \emph{pseudoeffectives} $\Psef^k(B)$ est le c\^one convexe ferm\'e engendr\'e par les classes de cycles effectifs.
  \item Le c\^one des classes \emph{num\'eriquement effectives (nef)}  $\Nef^k(B)$ est d\'efini comme le dual du c\^one $\Psef^{m-k}(B)$ par rapport au produit d'intersection.
\end{enumerate}

Si l'on \'ecrit $B=V/\Lambda$ avec $V$ un $\CC$-espace vectoriel et $\Lambda$ un r\'eseau dans $V$, on peut identifier une classe $\alpha \in \N^k(B)$ avec une 
$(k,k)$-forme r\'eelle sur $V$ qui s'identifie encore avec une forme hermitienne sur $\bigwedge^k V$ (cf. section \ref{section_prel}). 
Cela nous permet de d\'efinir deux autres notions de positivit\'e.

\begin{enumerate}
 \item Une classe $\alpha \in \N^k(B)$ est dite 
\emph{fortement positive} si la $(k,k)$-forme associ\'ee s'\'ecrit comme combinaison lin\'eaire convexe de
formes 
\[  i l_1 \wedge  \bar l_1 \wedge  \dots \wedge i l_k \wedge  \bar l_k\]
avec $l_j \in V^*$ pour $j=1, \dots, k$.
On obtient ainsi le c\^one ferm\'e $\Strong^k(B)$ engendr\'e par les classes fortement positives. 
\item Enfin, on dit qu'une classe est \emph{semipositive} si la forme hermitienne associ\'ee est semipositive et l'on
note $\Semi^k(B)$ le c\^one engendr\'e par ces classes.
\end{enumerate}
Le lien entre ces c\^ones est donn\'e par la cha\^ine d'inclusions \cite[Lemma 1.5]{DELV}
\begin{equation}
 \label{inclusions1}
\Sym^k \Psef^1 (B) \subset \Psef^k(B) \subset \Strong^k(B) \subset \Semi^k(B) \subset \Nef^k(B),
\end{equation}
o\`u l'on note $\Sym^k \Psef^1(B)$ le c\^one convexe engendr\'e par les produits de $k$ \'el\'ements
de $\Psef^1(B)$.
Pour $k=1, m-1$, tous les c\^ones de la cha\^ine (\ref{inclusions1}) co\"incident, de sorte que l'on a un seul c\^one qui a
\'et\'e d\'etermin\'e par Prendergast-Smith dans \cite{PS}. Pour $2 \le k \le m-2$, on se demande quelles inclusions 
sont strictes et quelles inclusions sont des \'egalit\'es.

La conjecture suivante est un cas particulier d'une conjecture de Harvey et Knapp \cite{HK}, qui a \'et\'e pr\'ecis\'ee par Lawson dans
\cite{Lawson}.
\begin{conjecture}
\label{question12}
Soit $B$ une vari\'et\'e ab\'elienne de dimension $m$. Alors on a, pour tout $1 \le k \le m$,
 \[\Psef^k(B)=\Strong^k(B).\]
\end{conjecture}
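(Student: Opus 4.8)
The plan is to prove the nontrivial inclusion $\Strong^k(B) \subseteq \Psef^k(B)$, since the reverse inclusion is already part of the chain \eqref{inclusions1}. By the identification of $\N^k(B)$ with translation-invariant (constant-coefficient) $(k,k)$-forms used in Section~\ref{section_prel}, the cone $\Strong^k(B)$ is, by its very definition, the closed cone generated by the decomposable forms $\eta_W := i l_1 \wedge \bar l_1 \wedge \dots \wedge i l_k \wedge \bar l_k$, where $W = \bigcap_j \ker l_j \subset V$ runs over the complex subspaces of codimension $k$. The same translation-invariant picture controls $\Psef^k(B)$ through an averaging principle special to abelian varieties: every translation $t_x$ is homotopic to the identity, so $t_x^\ast$ acts trivially on $H^\bullet(B)$, and averaging a closed positive current over $x \in B$ replaces it by a constant-coefficient representative of the same class and of the same positivity type. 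Thus the whole problem reduces to showing that the combinations $\alpha = \sum_j c_j\,\eta_{W_j}$ that happen to lie in $\N^k(B)$ are limits of nonnegative combinations of fundamental classes of effective cycles. When a single $W$ is \emph{rational}, i.e. $W \cap \Lambda$ has full rank and $W/(W\cap\Lambda)$ is an abelian subvariety $B_W$, the form $\eta_W$ is a positive multiple of $[B_W]$, hence effective; so the rational generators already lie in $\Psef^k(B)$.

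For the general case the plan is an approximation argument, and for $B = A^e$ one can try to make it effective using the explicit description of $\N^\bullet(A^e)$. By Theorem~\ref{Tankeev} the algebra is generated by $\N^1$, and by Theorem~\ref{thm_Thompson} and Corollary~\ref{corol_rel_explicites} the ideal of relations is understood explicitly; combined with the $\GL_e(\RR)$-action this should reduce $\Strong^k(A^e)$ to finitely many orbit representatives. One would then match these against the effective classes at hand: the cone $\Sym^k \Psef^1(A^e)$, and the fundamental classes of the images of the homomorphisms $A^r \to A^e$. Since $\End(A)=\ZZ$ for $A$ very general, these images are the subtori with tangent space $V_0 \otimes_\CC U_\CC$ for a rational subspace $U \subseteq \QQ^e$, where $V_0 = T_0 A$; their codimension $n(e-\dim U)$ is a multiple of $n=\dim A$. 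Intersection products of these, together with $\Sym^k\Psef^1$, provide the pool of effective classes from which one hopes to approximate all strongly positive classes.

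The hard part will be producing enough effective cycles. The subtori above only reach codimensions divisible by $n$, so in the remaining codimensions one must build genuinely non-homomorphic effective cycles whose averaged classes fill out $\Strong^k$; moreover a generic generator $\eta_W$ is irrational and then not even a Hodge class, so pseudoeffectivity cannot be tested generator by generator and must be extracted from the full rational class $\alpha$. A structural reformulation makes the difficulty precise: by the duality between strong and weak positivity, the dual cone of $\Strong^k$ is the cone of weakly positive $(m-k,m-k)$-classes, which contains $\Semi^{m-k}$; hence $\Strong^k \subseteq (\Semi^{m-k})^\vee$, and the only gap separating this from $\Psef^k = (\Nef^{m-k})^\vee$ is the possible strict inclusion $\Semi^{m-k} \subsetneq \Nef^{m-k}$, precisely the phenomenon analysed in the second part of this paper. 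Closing that gap — i.e. showing that no strongly positive class escapes the closed cone of effective classes in the codimensions where $\Semi \neq \Nef$ — is the genuine content of the conjecture, and the reason it remains open in general.
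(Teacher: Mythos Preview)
The statement you are addressing is a \emph{conjecture}, not a theorem: the paper does not prove it and explicitly says so. It is introduced as a special case of a conjecture of Harvey--Knapp (refined by Lawson), and the introduction states that ``la conjecture \ref{question12} \dots\ reste ouverte pour $k \ge 3$ et $e \ge 2$''. There is therefore no proof in the paper to compare your proposal against.

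Your proposal is, in the end, not a proof either, and you say so yourself in the last paragraph. What you have written is an outline of a strategy together with an honest diagnosis of where it breaks. A few comments on the content:
\begin{itemize}
\item The averaging-over-translations argument and the observation that rational decomposable generators $\eta_W$ give classes of abelian subvarieties are correct and standard; they explain why the conjecture is easy for those particular generators but say nothing about irrational $W$, as you note.
\item Your duality reformulation is essentially correct: since strong and weak positivity are dual, $\Strong^k = (\Weak^{m-k})^\vee \subset (\Semi^{m-k})^\vee$, while $\Psef^k = (\Nef^{m-k})^\vee$. Thus $\Semi^{m-k} = \Nef^{m-k}$ would indeed force $\Strong^k \subset \Psef^k$. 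But the main result of this very paper (Th\'eor\`eme~\ref{thm_inclusion_stricte_2}) is that $\Semi^{m-k}(A^e) \subsetneq \Nef^{m-k}(A^e)$ for all the interesting codimensions, so this route is blocked on $A^e$, not merely ``possibly'' blocked.
\item The idea of producing effective cycles from images of homomorphisms $A^r \to A^e$ is fine, but as you observe their codimensions are multiples of $n$, so they cannot generate $\Psef^k$ in intermediate codimensions; and there is no mechanism in your outline for manufacturing the missing effective classes.
\end{itemize}
In short: there is no gap to point out in the usual sense, because you have not claimed a proof; but you should be aware that the paper contains no proof of this statement and treats it as open.
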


En g\'en\'erale, il est difficile de d\'ecrire les c\^ones $\Psef^k(B)$ et $\Strong^k(B)$ explicitement,
ce qui rend la v\'erification de la conjecture \ref{question12} difficile. 

Pour
$A$ une vari\'et\'e ab\'elienne principalement polaris\'ee tr\`es g\'en\'erale de dimension $n \ge 2$, on a par la proposition 5.2 de \cite{DELV},
\[\Sym^2 \Psef^1(A \times A)=\Semi^2(A \times A),\]
et
\[\Sym^{2n-2}\Psef^1(A\times A)= \Strong^{2n-2}(A \times A),\]
ce qui v\'erifie la conjecture \ref{question12} dans ces deux cas.
\begin{question}
\label{question2}
Si $A$ est une vari\'et\'e ab\'elienne principalement polaris\'ee tr\`es
g\'en\'erale de dimension $n$, a-t-on 
\[\Sym^k \Psef^1(A^e)=\Psef^k(A^e)=\Strong^k(A^e)=\Semi^k(A^e)\]
pour tout $k \in \{0, \dots, ne\}$ et tout $e \ge 2$?
\end{question}

On obtient le r\'esultat partiel suivant (th\'eor\`eme \ref{thm_inclusion_stricte_1} et proposition \ref{prop_semi4}).
\begin{thm}
\label{thm_intro2}
Soit $A$ une vari\'et\'e ab\'elienne principalement polaris\'ee tr\`es g\'en\'erale de dimension $n$ et soit $e \ge 2$. 
 \begin{enumerate}
  \item 
On a
\[\Sym^k \Psef^1(A^e) \varsubsetneq \Semi^k(A^e)\]
pour $3 \le k \le n$, et
les rayons extr\'emaux du c\^one $\Sym^k \Psef^1(A \times A)$ sont aussi extr\'emaux dans 
le c\^one  $\Semi^k(A \times A)$ pour $2 \le k \le n$.
\item Pour $n=3$, on a 
\[\Sym^4 \Psef^1(A \times A)= \Semi^4(A \times A).\]
\end{enumerate}
\end{thm}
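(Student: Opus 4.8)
The plan is to work throughout with the identification of $\N^1(A^e)$ with the space $\Sym^2\RR^e$ of real symmetric $e\times e$ matrices, under which $\Psef^1(A^e)=\Nef^1(A^e)$ is the cone of positive semidefinite matrices and a rank one matrix $vv^{t}$ ($v\in\RR^e$) corresponds to $p_v:=\phi_v^*\theta$, where $\phi_v\colon A^e\to A$ is the morphism attached to $v$. Writing $A^e=V/\Lambda$ with $V=\bigoplus_{a=1}^e U_a$, $U_a\cong\CC^n$, the class $p_v$ is the $(1,1)$-form $\tfrac{i}{2}\sum_{j=1}^n \ell^v_j\wedge\bar\ell^v_j$ with $\ell^v_j=\sum_a v_a\,dz^{(a)}_j$. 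For every $k$ one has $\Sym^k\Psef^1(A^e)=\cone\{p_{v_1}\cdots p_{v_k}\}$ by definition, and for $k\le n$ the first theorem identifies all four cones inside $\Sym^k(\Sym^2\RR^e)$. The first step is to record the Hermitian form on $\bigwedge^k V$ attached to a product: expanding gives
\[p_{v_1}\cdots p_{v_k}=\sum_{(j_1,\dots,j_k)}\eta_{(j)}\otimes\bar\eta_{(j)},\qquad \eta_{(j)}=\ell^{v_1}_{j_1}\wedge\cdots\wedge\ell^{v_k}_{j_k},\]
a positive semidefinite form whose range is $R:=\Span\{\eta_{(j)}\}\subset\bigwedge^k V^*$.

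For the extremality statement of part (1) (the case $e=2$) I would argue as follows. If $p_{v_1}\cdots p_{v_k}=\alpha+\beta$ with $\alpha,\beta\in\Semi^k(A\times A)$, then $H_\alpha,H_\beta\succeq 0$ sum to a form with range $R$, whence $\alpha$ and $\beta$ both have Hermitian form supported on $R$. The problem is thereby reduced to the purely linear-algebraic assertion that for $2\le k\le n$ the only classes in $\N^k(A\times A)$ whose Hermitian form is positive semidefinite and supported on $R$ are the nonnegative multiples of $p_{v_1}\cdots p_{v_k}$; for $e=2$ one makes $R$ explicit through the incidence relations among the $\ell^{v_s}_j=(v_s)_1dz^{(1)}_j+(v_s)_2dz^{(2)}_j$ and checks that $\N^k$ meets this cone of supported forms in a single ray. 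I expect this last verification to be the main obstacle: it demands a concrete understanding of which Hermitian forms on $\bigwedge^k V$ are Hodge classes, i.e. of $\N^k$ as a subspace of Hermitian forms, and it is exactly here that the very general hypothesis (so that the Hodge classes are the $\Sp_{2n}$-invariants) and the $e=2$ description of $R$ must be combined. The same computation shows $p_{v_1}\cdots p_{v_k}$ is extremal already in $\Sym^k\Psef^1(A\times A)$, so it gives both halves at once.

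For the strict inclusion $\Sym^k\Psef^1(A^e)\varsubsetneq\Semi^k(A^e)$ with $3\le k\le n$ and arbitrary $e\ge 2$, I would separate an explicit semipositive class from $\Sym^k\Psef^1(A^e)$ by a hyperplane: it suffices to exhibit $\beta\in\N^{en-k}(A^e)$ pairing nonnegatively with every product $p_{v_1}\cdots p_{v_k}$ (that is, $\beta\in(\Sym^k\Psef^1)^\vee$) together with a class whose Hermitian form is positive semidefinite but on which $\beta$ is strictly negative. Decomposing $\Sym^k(\Sym^2\RR^e)=\bigoplus_{\mu\vdash k}S^{2\mu}$ under $\GL_e$, the products $p_{v_1}\cdots p_{v_k}$ span the whole representation but generate only a proper subcone, and for $k\ge 3$ a Schur component other than $S^{(2k)}$ (already present at $k=3$) supplies the separating $\beta$, while semipositivity of the matching class is checked directly on its Hermitian form. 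The growth of the extremal locus of $\Semi^k$ against the $k$-dimensional family $\{p_{v_1}\cdots p_{v_k}\}$ (for $e=2$) signals the strictness and explains why it first appears at $k=3$.

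For part (2) I would use that $k=4$ is the codimension $m-2$ case for $m=\dim(A\times A)=6$, and that $\N^4(A\times A)\cong\N^2(A\times A)$ is six-dimensional, with $\N^4(A\times A)=S^{(6,2)}\oplus S^{(4,4)}$ as a $\GL_2$-representation. By Proposition 5.2 of \cite{DELV} one has $\Sym^4\Psef^1(A\times A)=\Strong^4(A\times A)$, so the asserted equality reduces to the single inclusion $\Semi^4(A\times A)\subset\Strong^4(A\times A)$: on this six-dimensional space every positive semidefinite Hermitian form that is a Hodge class must be shown to be a convex combination of decomposable forms $i\ell_1\wedge\bar\ell_1\wedge\cdots\wedge i\ell_4\wedge\bar\ell_4$. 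I would verify this by an explicit $\GL_2$-equivariant computation: invariance reduces the description of $\Semi^4(A\times A)$ to locating its boundary rays in each of the two isotypic planes, and one checks that these extremal rays are exactly the products $p_{v_1}\cdots p_{v_4}$, which recovers $\Strong^4(A\times A)$ and hence the equality $\Sym^4\Psef^1(A\times A)=\Semi^4(A\times A)$.
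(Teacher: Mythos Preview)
Your proposal outlines a plausible strategy but leaves the decisive steps as unfulfilled promises, and in one place the approach is harder than what is actually needed.

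\textbf{Extremality of the rays (part 1).} You correctly observe that if $p_{v_1}\cdots p_{v_k}=\alpha+\beta$ with $\alpha,\beta$ semipositive, then $H_\alpha,H_\beta$ must be supported on $R=\Span\{\eta_{(j)}\}$. But the reduction you then propose---showing that the only Hodge classes in $\N^k(A\times A)$ whose Hermitian form is supported on $R$ are the nonnegative multiples of $p_{v_1}\cdots p_{v_k}$---is both unproved (you flag it yourself as ``the main obstacle'') and unnecessarily delicate: the subspace $R\subset\bigwedge^k V^*$ is large (its dimension grows with $n$), and pinning down which Hermitian forms supported on $R$ lie in the $\Sp$-invariant subspace $\N^k$ requires exactly the structural input you have not supplied. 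The paper avoids this entirely. It proves (Proposition~\ref{prop_bilinear_decomp}, Lemma~\ref{lemme_mat_M}) that the Hermitian forms $B_\alpha$ block-diagonalise along the decomposition of $\bigwedge^k V_\RR$ into irreducible $\GL(W)$-modules, and that for $e=2$ projection onto a single copy of $\Sym^k W$ yields an isomorphism $b'\colon \N^k(A\times A)\xrightarrow{\sim}\Symm_{k+1}(\RR)$ sending $\Semi^k$ into $\Symm_{k+1}^+(\RR)$. An easy induction via the identity $b'_{\theta_1\cdot\alpha}=\bigl(\begin{smallmatrix}b'_\alpha&0\\0&0\end{smallmatrix}\bigr)$ shows $b'_{g_1\theta_1\cdots g_k\theta_1}$ has rank $1$; since rank-$1$ matrices are extremal in $\Symm_{k+1}^+(\RR)$ and $b'$ is an isomorphism, extremality in $\Semi^k$ follows immediately.

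\textbf{Strict inclusion (part 1).} Your separating-hyperplane argument is just a restatement of the goal in dual form; you do not exhibit the separating $\beta$ nor the semipositive witness. The paper is completely concrete here: it takes $\alpha=\theta_1^2+\theta_1\theta_2+\theta_2^2+\lambda^2\in\N^2(A\times A)$, checks from the explicit block matrices that $\alpha\notin\Semi^2=\Sym^2\Psef^1$ while $\theta_1\cdot\alpha\in\Semi^3$, and then uses $\Semi\cdot\Semi\subset\Semi$ together with the injectivity statement (Proposition~\ref{prop_im_inv}: $\theta_1\cdot\gamma\in\Sym^{k+1}\Psef^1\Rightarrow\gamma\in\Sym^k\Psef^1$) to obtain $\theta_1^{k-2}\cdot\alpha\in\Semi^k\setminus\Sym^k\Psef^1$ for every $3\le k\le n$. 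The case $e\ge 3$ follows by pullback along a projection $A^e\to A\times A$.

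\textbf{Part (2).} Your reduction via \cite[Prop.~5.2]{DELV} to proving $\Semi^4(A\times A)\subset\Strong^4(A\times A)$ is valid, and what you describe as the remaining $\GL_2$-equivariant check---that the extremal rays of $\Semi^4$ are products $p_{v_1}\cdots p_{v_4}$---is exactly what the paper carries out, but again using the explicit block matrices: $\Semi^4$ is identified with $\Symm_3^+(\RR)\cap L_{\ge 0}$ for a half-space $L_{\ge 0}$, its extremal rays are shown to be the rank-$1$ matrices therein, and each such matrix is matched to $g\cdot(\theta_1^2\theta_2^2)$ or $g\cdot(\theta_1^3\theta_2)$ by solving a small polynomial system. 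Your outline points in the right direction but omits the computation that constitutes the proof.
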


La conjecture \ref{question12}, resp. la question de savoir si le c\^one $\Psef^k(A^e)$ co\"incide avec l'un des deux c\^ones $\Sym^k \Psef^1(A^e)$ ou
$\Semi^k(A^e)$, reste ouverte pour $k \ge 3$ et $e \ge 2$ (resp. $k \ge 2$ et $e \ge 3$).

Pour la relation entre les c\^ones $\Psef^k(A \times A)$ est $\Nef^k(A \times A)$, les auteurs de \cite{DELV} montrent que, pour $A$ une surface, on a 
\[\Psef^2(A \times A) \varsubsetneq \Nef^2(A \times A),\]
ce qui nous m\`ene \`a la question suivante.

\begin{question}
\label{question3}
Si $A$ est une vari\'et\'e ab\'elienne principalement polaris\'ee tr\`es
g\'en\'erale de dimension $n$, a-t-on 
\[\ \Psef^k(A^e) \varsubsetneq \Nef^k(A^e) \]
pour tout $2 \le k \le en-2$?
\end{question}

Concernant la question \ref{question3}, on a le r\'esultat suivant (th\'eor\`eme \ref{thm_inclusion_stricte_2}).
\begin{thm}
\label{thm_intro3}
Soit $A$ une vari\'et\'e ab\'elienne principalement polaris\'ee tr\`es g\'en\'erale de dimension $n$ et soit $e \ge 2$. 
On a
\[\Psef^k(A^e) \varsubsetneq \Nef^k (A^e)\]
pour $2 \le k \le en-2$. 
\end{thm}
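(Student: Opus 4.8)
Le plan consiste à ramener l'énoncé à la construction, en chaque codimension $k$, d'une paire de classes nef, en codimensions complémentaires, dont le produit est strictement négatif. Rappelons d'abord le critère de séparation suivant, conséquence de la bidualité des cônes convexes fermés : une classe $\gamma \in \N^k(A^e)$ n'est \emph{pas} pseudoeffective si et seulement s'il existe $\eta \in \Nef^{en-k}(A^e)$ avec $\gamma \cdot \eta < 0$. En effet, $\Psef^k(A^e)$ est un cône convexe fermé dont le cône dual est, par définition, $\Nef^{en-k}(A^e)$, et un point hors d'un tel cône est séparé par une forme linéaire du cône dual qui y prend une valeur négative. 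Ainsi, pour établir l'inclusion stricte $\Psef^k(A^e) \varsubsetneq \Nef^k(A^e)$, il suffit d'exhiber $\gamma \in \Nef^k(A^e)$ et $\eta \in \Nef^{en-k}(A^e)$ avec $\gamma \cdot \eta < 0$ : la classe $\gamma$ est alors nef mais non pseudoeffective. Il s'agit précisément du phénomène, déjà au c\oe ur de \cite{DELV}, selon lequel le produit de deux classes nef n'est pas nécessairement pseudoeffectif.

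Pour produire de telles classes, j'utiliserais l'inclusion $\Semi^k(A^e) \subset \Nef^k(A^e)$ de la cha\^ine (\ref{inclusions1}) : il suffit de construire $\gamma$ et $\eta$ \emph{semipositives}. L'avantage est que la semipositivité se lit sur la forme hermitienne associée, qui doit \^etre semidéfinie positive, condition que l'on peut tester explicitement dans le modèle fourni par la première partie, où $\N^1(A^e)$ s'identifie à l'espace des matrices symétriques $e \times e$ (avec l'action $M \mapsto gMg^t$ de $\GL_e(\RR)$) et $\N^\bullet(A^e)$ au quotient $\Sym^\bullet \N^1(A^e)/I$. Le point conceptuel est que, pour $2 \le k \le en-2$, le cône $\Semi^k$ des formes faiblement positives contient strictement le cône $\Strong^k$ des formes fortement positives, et que l'accouplement de deux formes faiblement positives peut \^etre négatif, ce qui n'est possible ni pour deux formes fortement positives, ni pour un couple fort/faible.

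Je procéderais ensuite par deux réductions afin de couvrir toute la plage $2 \le k \le en-2$ à partir d'un nombre fini de cas de base. D'une part, la dualité de Poincaré donne $\Psef^k(A^e) = \Nef^k(A^e)$ si et seulement si $\Psef^{en-k}(A^e) = \Nef^{en-k}(A^e)$ (en dualisant l'égalité $\Psef^k = (\Psef^{en-k})^\vee$) ; il suffit donc de traiter les codimensions $k \le en/2$. D'autre part, l'accouplement se comporte bien vis-à-vis des produits extérieurs : si $\gamma_0 \in \Semi^{k_0}(A^2)$ et $\eta_0 \in \Semi^{2n-k_0}(A^2)$ vérifient $\gamma_0 \cdot \eta_0 < 0$, alors, en notant $\theta'$ une classe ample sur $A^{e-2}$ et $\boxtimes$ le produit extérieur, les classes
\[\gamma = \gamma_0 \boxtimes \theta'^{\,k-k_0} \in \Semi^k(A^e), \qquad \eta = \eta_0 \boxtimes \theta'^{\,(e-2)n-(k-k_0)} \in \Semi^{en-k}(A^e)\]
sont semipositives (le produit extérieur de classes semipositives l'est) et vérifient
\[\gamma \cdot \eta = (\gamma_0 \cdot \eta_0)\,\bigl(\theta'^{\,(e-2)n}\bigr) < 0,\]
puisque $\theta'^{\,(e-2)n} > 0$. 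En faisant varier $k_0 \in \{2, \dots, n\}$ et $e \ge 2$, ces classes atteignent toutes les codimensions de $\{2, \dots, (e-1)n\}$, donc, avec la dualité ci-dessus, toute la plage voulue. Tout se ramène ainsi aux cas de base $e=2$.

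La difficulté principale sera la construction explicite, pour chaque $k_0 \in \{2, \dots, n\}$, des classes semipositives $\gamma_0, \eta_0 \in \N^\bullet(A \times A)$ en codimensions complémentaires et de produit négatif : c'est la généralisation à toute codimension de l'exemple de \cite{DELV}, qui traite le cas $n=2$, $k_0=2$. Concrètement, dans le modèle où $\N^\bullet(A \times A)$ est engendré par $\theta_1$, $\theta_2$ et la classe de correspondance $\lambda$, il s'agit d'écrire des polynômes convenables en ces trois classes, de vérifier la semipositivité des formes hermitiennes associées sur $\bigwedge^{k_0}(V_0 \otimes \CC^2)$, puis de calculer leur accouplement et d'en contr\^oler le signe. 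Je m'attends à ce que ce calcul, de nature essentiellement combinatoire et représentation-théorique (via l'action de $\GL_2(\RR)$ mise en place dans la première partie), constitue le c\oe ur technique de la preuve.
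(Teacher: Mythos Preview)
Your reduction scheme (bidualité, dualité de Poincaré, produit extérieur par une classe ample) est correcte et proche de celle du papier. Mais le choix de chercher $\gamma$ et $\eta$ toutes deux \emph{semipositives} est une vraie obstruction, pas seulement une difficulté technique.

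En codimension $k_0=2$ sur $A\times A$, la proposition~5.2 de \cite{DELV} (rappelée dans l'introduction) donne $\Sym^2\Psef^1(A\times A)=\Semi^2(A\times A)$, donc $\Semi^2(A\times A)=\Psef^2(A\times A)=\Strong^2(A\times A)$. Toute classe $\gamma_0\in\Semi^2(A\times A)$ est ainsi pseudoeffective (et m\^eme fortement positive), de sorte que $\gamma_0\cdot\eta_0\ge 0$ pour \emph{toute} classe nef $\eta_0\in\Nef^{2n-2}(A\times A)$, a fortiori pour toute $\eta_0$ semipositive. Le cas de base $k_0=2$ de ton plan est donc impossible, et c'est précisément celui dont tu as besoin pour atteindre la codimension $k=2$ (et, par dualité, $k=en-2$). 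Pour $k_0\ge 3$, l'existence d'un tel couple $(\gamma_0,\eta_0)$ semipositif de produit négatif n'est établie nulle part et irait d'ailleurs à l'encontre de la conjecture de Harvey--Knapp si l'on avait $\Semi=\Psef$ en ces codimensions.

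Le papier contourne cela en sortant du c\^one semipositif : la classe $\mu=4\theta_1\theta_2-\lambda^2$ est nef (lemme~\ref{mu_nef}, qui demande un argument spécifique via la décomposition de $\N^{2n-2k}$ en $\GL(W)$-modules) mais \emph{pas} semipositive. On obtient alors des classes nef non semipositives $\theta_1^k\mu$ et $\theta_1^{n-2}\theta_2^k\mu$ qui, n'étant pas dans $\Semi\supset\Psef$, ne sont pas pseudoeffectives. Autrement dit, le témoin $\gamma$ que tu cherches doit \^etre nef sans \^etre semipositif ; l'inclusion $\Semi\subset\Nef$ ne suffit pas, il faut exploiter la stricte inclusion et produire la nefness par un autre argument (ici : $\Nef\cdot\Psef\subset\Nef$ appliqué à $\mu$). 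La réduction de $e$ à $e-1$ dans le papier se fait d'ailleurs par image réciproque via une projection $A^e\to A^{e-1}$, qui préserve à la fois la nefness et la non-semipositivité (lemme~\ref{lemme_pullback_semi}), plut\^ot que par produit extérieur.
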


Les points clefs dans les d\'emonstrations du th\'eoreme \ref{thm_intro2} et du th\'eor\`eme \ref{thm_intro3} 
sont d'un c\^ot\'e l'utilisation du fait que tous les c\^ones dans la cha\^ine (\ref{inclusions1}) sont invariants sous
l'action de $\GL_e(\RR)$ sur $\N^\bullet(A^e)$ et d'autre c\^ot\'e la caract\'erisation des c\^ones $\Sym^k \Psef^1(A^e)$
et $\Semi^k(A^e)$. Expliquons rapidement les r\'esultats que l'on obtient concernant ces deux c\^ones: 
par un r\'esultat de Prendergast-Smith \cite{PS}, le c\^one $\Psef^1(A)$ est homog\`ene sous l'action de $\GL_e(\RR)$ et ses classes extr\'emales forment
une orbite, ce qui permet une description facile des g\'en\'erateurs de $\Sym^k \Psef^1(A^e)$ (proposition \ref{prop_ext_rays}). 
D'autre c\^ot\'e, le c\^one des classes semipositives correspond naturellement 
\`a un c\^one de matrices (en regardant les matrices repr\'esentant les formes hermitiennes associ\'ees
aux classes dans $\N^k(A^e)$), de sorte que l'on pourrait esp\'erer calculer ces matrices et d'en obtenir des in\'equations d\'efinissant $\Semi^k(A^e)$. 
Pour $k=2$ et $e=2$ les matrices sont tr\`es petites, de sorte que les calculs ne posent pas de probl\`emes alors que,
pour $k \ge 3$ et $e \ge 2$ (resp. pour $k=2$ et $e \ge 3$), les matrices deviennent rapidement beaucoup plus grandes, ce qui rend les calculs tr\`es p\'enibles d\'ej\`a pour
$k=3$ et $e=2$. On r\'esoud ce probl\`eme en d\'ecrivant la structure de ces matrices plus conceptuellement:
\'ecrivons $A=U/\Gamma$ avec $U$ un $\CC$-espace vectoriel et $\Gamma$ un r\'eseau dans $U$.
L'action de $\GL_e(\RR)$ sur $\N^\bullet(A^e)$ provient d'une action sur $U^{\oplus e}$, et en utilisant la th\'eorie des repr\'esentations, on montre que les matrices repr\'esentant
les formes hermitiennes associ\'ees aux classes dans $\N^k(A^e)$ sont des matrices diagonales par blocs, et 
ces blocs correspondent aux modules irr\'eductibles dans une d\'ecomposition du $\GL_e(\RR)$-module $\bigwedge^k U^{\oplus e}$.
Pour deux modules diff\'erents isomorphes dans une d\'ecomposition de $\bigwedge^k U^{\oplus e}$, les blocs correspondants sont
identiques et un bloc se calcule \`a partir des matrices repr\'esentant
l'action de $\GL_e(\RR)$ sur le module irr\'eductible de $\bigwedge^k U^{\oplus e}$ correspondant
(proposition \ref{prop_bilinear_decomp} et remarque \ref{rem_algorithme}).
 Pour calculer les matrices repr\'esentant les formes hermitiennes
associ\'ees aux classes dans $\N^k(A^e)$, on est ainsi essentiellement ramen\'e \`a
\begin{enumerate}
 \item d\'ecomposer $\bigwedge^k U^{\oplus e}$ en $\GL_e(\RR)$-modules irr\'eductibles,
 \item calculer les matrices repr\'esentant l'action de $\GL_e(\RR)$ sur les $\GL_e(\RR)$-modules irr\'eductibles apparaissant dans $\bigwedge^k U^{\oplus e}$.
\end{enumerate}

\vspace{5mm}

Je tiens \`a remercier chaleureusement O. Debarre pour avoir sugg\'er\'e ce sujet et pour les nombreux conseils
qu'il m'a donn\'es.

\section{La structure de l'alg\`ebre $\N^\bullet(A^e)$}

\subsection{Le groupe de Hodge d'une vari\'et\'e ab\'elienne}
\label{section_prel}

Soit $B$ une vari\'et\'e ab\'elienne complexe de dimension $m$ et
\'ecrivons $B=V/\Lambda$, o\`u $V$ est un $\CC$-espace vectoriel de dimension $m$ et
$\Lambda \subset V$ est un r\'eseau. On a $\Lambda=H_1(B, \ZZ)$ et
$H^1(B, \ZZ)=\Lambda^*$, o\`u l'on note $\Lambda^*$ le r\'eseau dual \`a $\Lambda$. Soit $K=\QQ$ ou $\RR$, et posons
$V_K=\Lambda \otimes_{\ZZ} K$.
Pour $k \ge 1$, le cup-produit fournit un isomorphisme
\begin{equation*}
H^k(B, \ZZ) \simeq  \bigwedge^k \Lambda^*. 
\end{equation*}

Rappelons que l'on a (cf. \cite[Thm. 1.4.1]{BL})
\begin{equation}
 \label{kkforme}
H^{p,q}(B)=\bigwedge^p V^* \otimes \bigwedge^q \overline{V}^*,
\end{equation}
o\`u $V^*=\Hom_{\CC}(V,\CC)$ et 
$\overline{V}^*$ est l'espace vectoriel des formes $\CC$-antilin\'eaires sur $V$.

Soit
\[N^k(B)_{\QQ}=H^{k,k}(B) \cap H^{2k}(B, \QQ)\]
le $\QQ$-espace vectoriel des classes de Hodge (rationnelles) de degr\'e $2k$ sur $B$. 
Posons $\N^k(B)=N^k(B)_{\QQ} \otimes \RR$ et notons
$N^\bullet(B)=\bigoplus_{k \ge 0} \N^k(B)$
la $\RR$-alg\`ebre d\'efinie dans l'introduction. 
Remarquons que l'on a par  (\ref{kkforme}) une injection
de $\N^k(B)$ dans l'espace vectoriel r\'eel des $(k,k)$-formes r\'eelles sur
$V$, que l'on note $\bigwedge_{\RR}^{(k,k)} V^*$.
Cet espace vectoriel est isomorphe \`a celui des
formes hermitiennes sur $\bigwedge^k V$, que l'on note $\mathcal{H}$: si l'on
munit $V$ de coordonn\'ees  $(z_1, \dots, z_n)$ et $V^*$ des coordonn\'ees duales,
cet isomorphisme  est donn\'e par 
\begin{align*}
\label{bij_formes}
 \mathcal{H}   & \to \bigwedge^{(k,k)}_{\RR} V^*\\
  \sum_{I,J} h_{IJ} dz_I \otimes d \bar z_J   & \mapsto   \sqrt{-1}^{k^2} \sum_{I,J} h_{IJ} dz_I \wedge d \bar z_J, \nonumber
\end{align*}
o\`u $I=\{i_1, \dots, i_k\}\subset\{1, \dots n\},  dz_I= dz_{i_1} \wedge \dots \wedge dz_{i_k}$, 
et de m\^eme pour $J$. 
On peut ainsi associer \`a une classe de Hodge sur $B$ de degr\'e $2k$ une forme hermitienne
sur $\bigwedge^k V$, que l'on notera $H_\alpha$.

Soit $J: V_{\mathbb{R}} \to V_{\mathbb{R}}$ la structure complexe associ\'ee \`a
\[H_1(B, \mathbb{C})=H^{-1,0}(B) \oplus H^{0,-1}(B).\]
On obtient un morphisme
\begin{eqnarray*}
 h_J: \mathbb{S}^1 & \to & \SL(V_{\mathbb{R}}) \\
       e^{\sqrt{-1}\cdot \theta} & \mapsto & \cos \theta \cdot id_V + \sin \theta \cdot J,
\end{eqnarray*}
qui agit  par multiplication par $z$ sur $H^{1,0}(B)$ et  par multiplication par
$\bar z$ sur $H^{0,1}(B)$ (cf. \cite[Prop. 17.1.1, Rem. 17.1.2]{BL}).

\begin{definition}
Le \emph{groupe de Hodge} de $B$, not\'e $\Hg(B)$, est le plus petit sous-groupe de $\SL(V_{\mathbb{R}})$ d\'efini sur $\mathbb{Q}$
contenant l'image de $h_J$.
\end{definition}

Soit $\theta \in H^2(B, \ZZ)$ une polarisation de $B$.
Par l'isomorphisme $H^2(B, \QQ)=\bigwedge^2 V_{\QQ}^*$,
$\theta$ d\'efinit une forme altern\'ee non-d\'eg\'en\'er\'ee 
\[\omega_\theta: V_{\QQ} \times V_{\QQ} \to \QQ.\]
On note $\Sp(V_{\QQ}, \omega_\theta)$ le groupe symplectique associ\'e \`a $\omega_\theta$, qui est naturellement un
sous-groupe alg\'ebrique de $\SL(V_{\QQ})$. Le groupe de Hodge $\Hg(B)$ est un sous-groupe de $\Sp(V_{\QQ}, \omega_\theta)$
pour toute polarisation $\theta$  \cite[Prop. 17.3.2]{BL}. En particulier, $\Hg(B)$ agit naturellement sur $V_{\QQ}$. 

\begin{nota}
Si $G$ est un groupe et $M$ est un $G$-module, on note 
$M^G$ l'ensemble des invariants dans $M$ sous l'action de $G$.
\end{nota}

Regardons $H^1(B, \QQ)=V_{\QQ}^*$ comme repr\'esentation duale de $\Hg(B)$.  Par l'isomorphisme
$H^k(B, \QQ) =\bigwedge^k V_{\QQ}^*$, on obtient ainsi une structure de $\Hg(B)$-module sur $H^k(B, \QQ)$
pour $k \ge 1$.
Par l'\'egalit\'e
\cite[17.3.3]{BL}
\begin{equation*}
\N^k(B)_{\QQ}=H^{2k}(B, \QQ)^{\Hg(B)},
\end{equation*}
on est ainsi ramen\'e \`a un calcul des invariants pour d\'eterminer les classes de Hodge.

\subsection{Classes de Hodge sur les puissances d'une vari\'et\'e ab\'elienne}
\label{section_struc_puiss}

Soit $A$ une vari\'et\'e ab\'elienne (pas forcement principalement polaris\'ee tr\`es g\'en\'erale).
On veut \'etudier l'alg\`ebre des classes de Hodge sur des puissances de $A$.
Ecrivons $A=U/\Gamma$, o\`u
$U$ est un $\CC$-espace vectoriel et $\Gamma$ est un r\'eseau dans $U$. 
 Si l'on \'ecrit
$A^e=(U \otimes_{\ZZ} W_{\ZZ})/(\Gamma \otimes_{\ZZ} W_{\ZZ})$, on obtient une injection
\begin{align*}
\End(W_{\ZZ})  & \to \End(A^e)\\
g & \mapsto u_g,
\end{align*} 
o\`u $u_g$ est d\'efini par l'application lin\'eaire
\begin{align*}
 U \otimes_{\ZZ} W_{\ZZ} & \to  U \otimes_{\ZZ} W_{\ZZ} \\
 u \otimes w & \mapsto  u\otimes \transpose gw.
\end{align*}
Posons, pour $g \in \End(W_{\ZZ})$ et pour $\alpha \in H^\bullet(A^e, \QQ)$,
\[  g \cdot \alpha=u_g^*\alpha.\]
Cela induit une action de $\GL(W_{\QQ})$ sur 
$H^\bullet(A^e, \QQ)$, et l'on a 
\[H^{k}(A^e, \QQ) \simeq \bigwedge^k U^* \otimes W_{\QQ} \] 
en tant que $\GL(W_{\QQ})$-modules, o\`u $\GL(W_{\QQ})$ agit sur $W_{\QQ}$ tautologiquement.  

Or, par un r\'esultat de Hazama \cite[Cor. 1.11]{Hazama},
on a 
\begin{equation}
\label{hg_product}
\Hg(A^e) = \Hg(A),
\end{equation} 
et $\Hg(A)$ agit diagonalement sur $H^1(A^e, \QQ)$, i.e.,
\begin{equation*}
\forall g \in \Hg(A) \, \, \forall u \in U_{\QQ}^* \, \, \forall w \in W_{\QQ} \hspace{5mm} g \cdot (u \otimes w)=g \cdot u \otimes w.
\end{equation*}
On obtient ainsi une structure de $\Hg(A) \times \GL(W_{\QQ})$-module sur
$H^k(A^e, {\QQ})$. Par construction,
$\N^\bullet(A^e)_{\QQ}$ est invariant sous l'action de $\GL(W_{\QQ})$.

\subsection{G\'en\'erateurs et relations pour les classes de Hodge}
\label{section_genrel}

Soit maintenant $(A, \theta)$ une vari\'et\'e ab\'elienne principalement polaris\'ee tr\`es g\'en\'erale et repr\'enons
les notations de la section pr\'ec\'edente.
Par un r\'esultat de Tankeev, l'alg\`ebre $\N^\bullet(A^e)$ est engendr\'ee par des classes de codimension $1$ (th\'eor\`eme \ref{Tankeev}),
de sorte que l'application canonique $\Sym^\bullet \N^1(A^e) \to \N^\bullet(A^e)$ est surjective. On d\'etermine 
 des g\'en\'erateurs de $\N^\bullet(A^e)$ et l'id\'eal des relations dans $\Sym^\bullet \N^1(A^e)$ (proposition \ref{thm_gen} et corollaire \ref{corol_rel_explicites}).
Le groupe de Hodge d'une vari\'et\'e ab\'elienne principalement polaris\'ee tr\`es g\'en\'erale \'etant le groupe symplectique $\Sp(U_{\QQ})$
(par rapport \`a la polarisation principale), 
on est ramen\'e \`a l'\'etude 
de l'alg\`ebre des invariants $(\bigwedge^\bullet U^*_{\QQ} \otimes W_{\QQ})^{\Sp(U_{\QQ})}$ en tant que $\GL(W_{\QQ})$-module  pour obtenir ces r\'esultats. 
Comme cette alg\`ebre
a \'et\'e \'etudi\'ee par Thompson dans \cite{Thompson}, notre travail consiste essentiellement \`a traduire ses r\'esultats dans notre cadre.

Soit $K= \QQ$ ou $\RR$. On a
\[H^k (A^e, K)  \simeq \bigwedge^k (U_K^* \otimes_{\ZZ} W_{\ZZ}).\]

\begin{thm}[Tankeev]
\label{Tankeev}
Soit $A$ une vari\'et\'e ab\'elienne principalement polaris\'ee tr\`es g\'en\'erale. Alors
on a 
\begin{equation}
\label{hdg_general}
\Hg(A^e)=\Sp(U_{\QQ})
\end{equation}
 et $\N^\bullet(A^e)_{\QQ}$ est engendr\'e par des classes de codimension $1$.
\end{thm}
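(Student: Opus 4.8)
The plan is to establish the two assertions separately, first identifying the Hodge group and then deducing the generation statement from classical invariant theory for the symplectic group.

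\emph{Step 1: the Hodge group.} Since $\Hg(A)$ preserves the polarisation form $\omega_\theta$, one always has $\Hg(A) \subseteq \Sp(U_\QQ)$, so the content is the reverse inclusion for a very general $A$. First I would recall that principally polarised abelian varieties of dimension $n$ are parametrised by the Siegel space, over which the global monodromy group is $\Sp_{2n}(\ZZ)$, a Zariski-dense subgroup of $\Sp(U_\QQ)$. Since for a very general member of a variation of Hodge structure the connected monodromy group is contained in the derived group of the Mumford--Tate group (by the results of Deligne and Andr\'e surrounding the theorem of the fixed part), Zariski-density forces $\Sp(U_\QQ) \subseteq \Hg(A)$, whence $\Hg(A) = \Sp(U_\QQ)$. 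Combining this with Hazama's equality (\ref{hg_product}), namely $\Hg(A^e) = \Hg(A)$, yields (\ref{hdg_general}).

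\emph{Step 2: reduction to invariants.} Using the formula $\N^k(B)_\QQ = H^{2k}(B,\QQ)^{\Hg(B)}$ together with the isomorphism $H^{2k}(A^e,\QQ) \simeq \bigwedge^{2k}(U_\QQ^* \otimes W_\QQ)$ and Step 1, I would identify
\[
\N^k(A^e)_\QQ = \Bigl(\bigwedge\nolimits^{2k}(U_\QQ^* \otimes W_\QQ)\Bigr)^{\Sp(U_\QQ)},
\]
where $\Sp(U_\QQ)$ acts through the first factor only. Using $\omega_\theta$ to identify $U_\QQ^*$ with $U_\QQ$ as $\Sp(U_\QQ)$-modules, the graded algebra $\bigoplus_k \N^k(A^e)_\QQ$ becomes the algebra of $\Sp(U_\QQ)$-invariants in $\bigwedge^\bullet(U_\QQ \otimes W_\QQ)$. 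In degree one, $\bigwedge^2(U_\QQ \otimes W_\QQ) = (\bigwedge^2 U_\QQ \otimes \Sym^2 W_\QQ) \oplus (\Sym^2 U_\QQ \otimes \bigwedge^2 W_\QQ)$; since $\omega_\theta$ spans the one-dimensional invariant subspace of $\bigwedge^2 U_\QQ$ while $\Sym^2 U_\QQ$ carries no invariant, one gets $\N^1(A^e)_\QQ \simeq \Sym^2 W_\QQ$, with a spanning family of classes $\omega_{ij}$ ($1 \le i \le j \le e$) obtained by inserting $\omega_\theta$ into the $i$-th and $j$-th copies of $U_\QQ$.

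\emph{Step 3: generation in degree one.} It remains to show that these $\omega_{ij}$ generate the whole invariant algebra. Here I would invoke the First Fundamental Theorem for the symplectic group: the $\Sp(U_\QQ)$-invariant tensors in any tensor power of the standard module are spanned by the complete contractions built from $\omega_\theta$ over perfect matchings of the tensor slots, and vanish in odd total degree. Restricting these spanning contractions to the exterior algebra $\bigwedge^\bullet(U_\QQ \otimes W_\QQ) = \bigotimes_{i=1}^e \bigwedge^\bullet U_\QQ$, each elementary pairing of two copies of $U_\QQ$ is exactly one of the degree-one invariants $\omega_{ij}$, so every invariant is a product of such classes; this exhibits $\bigoplus_k \N^k(A^e)_\QQ$ as generated by $\N^1(A^e)_\QQ$. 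This is precisely the computation carried out by Thompson in \cite{Thompson}, and the residual work is to transcribe his description into the present notation.

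\emph{Main obstacle.} The delicate point is Step 3: Weyl's theorem is naturally phrased for full tensor powers, and one must verify that passing to the exterior algebra over the several copies of $U_\QQ$ indexed by $W_\QQ$ --- keeping track of the wedge signs and of the $\GL(W_\QQ)$-equivariance --- turns the matching contractions into genuine \emph{algebra} generators concentrated in degree one, rather than merely a spanning set in each graded piece. Controlling this bookkeeping (and, implicitly, organising the resulting relations) is the substance of Thompson's analysis on which the argument relies.
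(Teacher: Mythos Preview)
Your strategy matches the paper's: reduce $\Hg(A^e)$ to $\Hg(A)$ via Hazama, identify $\Hg(A)=\Sp(U_\QQ)$ for very general $A$, and then invoke invariant theory for $\Sp$ to get generation in degree one. The paper is terser on both points: it cites \cite[Prop.~17.4.2]{BL} directly for $\Hg(A)=\Sp(U_\QQ)$ rather than sketching the monodromy argument, and for the generation statement it appeals to Howe's theorem \cite[Thm.~2]{howe} (see also \cite{Ribet}), which is precisely the packaged form of the First Fundamental Theorem you are invoking and absorbs the bookkeeping you flag as the main obstacle. One small correction: Thompson's paper \cite{Thompson} is used in this article for the \emph{relations} (the ideal $I_{n+1}$ in Theorem~\ref{thm_Thompson}), not for the generation statement itself, so in Step~3 the right reference is Howe rather than Thompson.
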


\begin{proof}
Pour montrer (\ref{hdg_general}), il suffit  par (\ref{hg_product}) de remarquer que l'on a $\Hg(A)=\Sp(U_{\QQ})$ pour
$A$ tr\`es g\'en\'erale (cf. \cite[Prop. 17.4.2]{BL}).
Il s'ensuit 
\[\N^\bullet(A^e)_{\QQ}=\left( \bigwedge^{2k} V_{\QQ}^* \right )^{\Sp(U_{\QQ})}.\]
Par un th\'eor\`eme de Howe \cite[Thm. 2]{howe} (cf. aussi \cite[pp. 529-530]{Ribet}),
l'alg\`ebre \`a droite est engendr\'ee par des classes de degr\'e $2$,
ce qui fournit le r\'esultat souhait\'e.
\end{proof}

L'action de
$\Sp(U_{\QQ}) \times \GL(W_{\QQ})$ sur $V_{\QQ}^*=U_{\QQ}^* \otimes W_{\QQ}$ s'\'etend naturellement \`a 
une action de $\Sp(U_{\RR}) \times \GL(W_{\RR})$ sur $V_{\RR}^*=U_{\RR}^* \otimes W_{\RR}$, et
comme $\Sp(U_{\QQ})$ est dense dans $\Sp(U_{\RR})$, on a
\begin{equation*}
\N^\bullet(A^e)=\left( \bigwedge^{2k} V_{\QQ}^* \right )^{\Sp(U_{\QQ})} \hskip -9mm \otimes_{\QQ} \RR=  \left( \bigwedge^{2k} V_{\RR}^* \right )^{\Sp(U_{\RR})}.
\end{equation*}

Comme expliqu\'e plus haut,  nous traduisons maintenant les r\'esultats de Thompson \cite{Thompson} dans notre cadre.
Pour faciliter les notations pour la suite, on pose $W=W_{\RR}$. La proposition suivante correspond \`a
 \cite[proposition 2.2]{Thompson}.

\begin{prop}
\label{corol_decomp_N}
On a un isomorphisme de $\GL(W)$-modules
\[\N^\bullet(A^e) \simeq \bigoplus_\sigma \mathbb{S}_{\sigma}(W),\]
o\`u l'on note $\mathbb{S}_\sigma$ le foncteur de Schur correspondant au tableau de Young $\sigma$ et o\`u
l'on prend la somme sur les $\sigma$ tels que 
\begin{enumerate}
\item chaque ligne de $\sigma$ a un nombre pair d'\'el\'ements, 
\item la premi\`ere ligne a au plus $2n$ \'el\'ements et
\item le nombre de lignes est au plus $e$.
\end{enumerate}
\end{prop}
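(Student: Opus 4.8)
Le plan est de partir de l'identification, \'etablie ci-dessus \`a l'aide du th\'eor\`eme \ref{Tankeev},
\[\N^\bullet(A^e)=\Bigl(\bigwedge^{\bullet} V_{\RR}^*\Bigr)^{\Sp(U_{\RR})},\qquad V_\RR^*=U_\RR^*\otimes W,\]
et de d\'ecomposer l'alg\`ebre ext\'erieure $\bigwedge^\bullet(U_\RR^*\otimes W)$ sous les actions commutantes de $\Sp(U_\RR)\subset\GL(U_\RR^*)$ et de $\GL(W)$. On a $\dim_\RR U_\RR^*=2n$ et $\dim_\RR W=e$, et $\Sp(U_\RR)$ pr\'eserve la forme symplectique $\omega\in\bigwedge^2 U_\RR^*$ provenant de la polarisation ; on est donc ramen\'e \`a un probl\`eme de th\'eorie classique des invariants pour le couple $(\Sp_{2n},\GL_e)$. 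On peut complexifier, $\Sp(U_\RR)$ \'etant Zariski-dense dans $\Sp(U_\CC)$ et tous les modules en jeu \'etant d\'efinis sur $\RR$.

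J'appliquerais d'abord la formule de Cauchy duale pour l'alg\`ebre ext\'erieure, qui fournit comme $\GL(U_\RR^*)\times\GL(W)$-modules
\[\bigwedge^\bullet\!\bigl(U_\RR^*\otimes W\bigr)\simeq\bigoplus_{\lambda}\mathbb{S}_\lambda(U_\RR^*)\otimes\mathbb{S}_{\lambda'}(W),\]
la somme portant sur toutes les partitions $\lambda$, o\`u $\lambda'$ d\'esigne la partition conjugu\'ee. Comme $\GL(W)$ commute \`a $\Sp(U_\RR)$, on prend les invariants facteur par facteur :
\[\N^\bullet(A^e)\simeq\bigoplus_{\lambda}\bigl(\mathbb{S}_\lambda(U_\RR^*)\bigr)^{\Sp(U_\RR)}\otimes\mathbb{S}_{\lambda'}(W).\]
Le terme d'indice $\lambda$ est d\'ej\`a nul sauf si $\ell(\lambda)\le 2n$ (faute de quoi $\mathbb{S}_\lambda(U_\RR^*)=0$) et $\lambda_1\le e$ (faute de quoi $\mathbb{S}_{\lambda'}(W)=0$).

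Il reste \`a d\'eterminer la multiplicit\'e de la repr\'esentation triviale dans $\mathbb{S}_\lambda(U_\RR^*)$ restreinte \`a $\Sp_{2n}$. Le point clef, fourni par le th\'eor\`eme de Howe \cite[Thm.~2]{howe} d\'ej\`a cit\'e (ou par la r\`egle de restriction de Littlewood de $\GL_{2n}$ \`a $\Sp_{2n}$), est que cette multiplicit\'e vaut $1$ lorsque toutes les colonnes de $\lambda$ sont de longueur paire, et $0$ sinon ; de fa\c con \'equivalente, l'alg\`ebre des invariants est engendr\'ee en degr\'e $2$ par $(\bigwedge^2 U_\RR^*)^{\Sp(U_\RR)}\otimes\Sym^2 W\simeq\Sym^2 W$, donc est un quotient $\GL(W)$-\'equivariant de $\Sym^\bullet(\Sym^2 W)=\bigoplus_{\sigma\text{ \`a lignes paires}}\mathbb{S}_\sigma(W)$. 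En posant $\sigma=\lambda'$, la condition \og colonnes de $\lambda$ paires \fg{} devient \og lignes de $\sigma$ paires \fg{}, soit (1) ; la non-nullit\'e $\ell(\lambda)=\sigma_1\le 2n$ devient (2) ; et $\ell(\sigma)=\lambda_1\le e$ devient (3). La multiplicit\'e \'etant $0$ ou $1$, chaque $\sigma$ admissible fournit exactement une copie de $\mathbb{S}_\sigma(W)$, d'o\`u la d\'ecomposition annonc\'ee ; la composante de codimension $k$ correspond aux $\sigma$ v\'erifiant $|\sigma|=2k$.

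La principale difficult\'e est ce dernier calcul de multiplicit\'e hors de l'intervalle de stabilit\'e $\ell(\lambda)\le n$ : pour $n<\ell(\lambda)\le 2n$, la r\`egle de Littlewood na\"ive peut requ\'erir des r\`egles de modification. Je contournerais ce point en invoquant directement le premier th\'eor\`eme fondamental pour $\Sp_{2n}$ : les invariants de $\bigwedge^\bullet(U_\RR^*\otimes W)$ sont engendr\'es par les produits de la forme symplectique $\omega$, ce qui force les colonnes de $\lambda$ \`a \^etre paires et impose que la multiplicit\'e vaille $1$, sans recourir aux r\`egles de modification.
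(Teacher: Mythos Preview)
Your approach via the skew Cauchy decomposition and the computation of $\dim(\mathbb{S}_\lambda(U_\RR^*))^{\Sp_{2n}}$ is the natural route, and indeed this is what underlies Thompson's Proposition~2.2, which the paper simply cites without reproducing a proof. The identification of conditions (1)--(3) with, respectively, the nonvanishing of the $\Sp$-invariants, the nonvanishing of $\mathbb{S}_\lambda(U_\RR^*)$, and the nonvanishing of $\mathbb{S}_{\lambda'}(W)$ is correct.

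There is, however, a genuine gap in your last paragraph. The First Fundamental Theorem for $\Sp_{2n}$ tells you that the invariant ring is generated in degree~$2$, hence is a $\GL(W)$-equivariant \emph{quotient} of the multiplicity-free module $\Sym^\bullet(\Sym^2 W)=\bigoplus_{\sigma\text{ \`a lignes paires}}\mathbb{S}_\sigma(W)$. This gives the upper bound: $\dim(\mathbb{S}_\lambda V)^{\Sp_{2n}}\in\{0,1\}$, and the value is~$0$ unless $\lambda$ has even columns. But FFT says nothing about the kernel of this quotient map, so it does \emph{not} force the multiplicity to equal~$1$ for every even-column $\lambda$ with $\ell(\lambda)\le 2n$; a priori the kernel could swallow some $\mathbb{S}_\sigma(W)$ with $\sigma_1\le 2n$. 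Determining the kernel is precisely the Second Fundamental Theorem for $\Sp_{2n}$ (equivalently, Thompson's Theorem~2.3, which becomes Theorem~\ref{thm_Thompson} here), so invoking it at this stage would be circular in the paper's logical order.

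A clean noncircular fix is to appeal to the Cartan--Helgason theorem for the symmetric pair $(\GL_{2n},\Sp_{2n})$: an irreducible $\GL_{2n}$-module $\mathbb{S}_\lambda(\CC^{2n})$ has a nonzero $\Sp_{2n}$-fixed vector if and only if $\lambda_{2i-1}=\lambda_{2i}$ for all~$i$ (i.e.\ all columns even), and the fixed subspace is then one-dimensional. This single statement gives both bounds simultaneously, is valid for all $\ell(\lambda)\le 2n$ without modification rules, and replaces your FFT paragraph.
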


\begin{corol}
\label{corol_sym2}
 On a 
\[\N^1(A^e) \simeq \Sym^2 W\]
en tant que $\GL(W)$-modules.
\end{corol}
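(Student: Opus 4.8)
The plan is to deduce Corollary \ref{corol_sym2} directly from Proposition \ref{corol_decomp_N} by selecting the unique Young diagram contributing to degree $2k$ with $k=1$. First I would recall that $\N^1(A^e)$ is precisely the degree-$2$ part of the graded algebra $\N^\bullet(A^e)$, corresponding to Hodge classes of cohomological degree $2$. Under the decomposition of Proposition \ref{corol_decomp_N}, each Schur functor $\mathbb{S}_\sigma(W)$ sits in the graded piece whose cohomological degree equals the total number of boxes $|\sigma|$ of the diagram $\sigma$, since the isomorphism is one of graded $\GL(W)$-modules coming from $\N^\bullet(A^e) = (\bigwedge^\bullet V_{\RR}^*)^{\Sp(U_{\RR})}$ with $V_{\RR}^* = U_{\RR}^* \otimes W$. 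Thus I would isolate the summands with $|\sigma|=2$.

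Next I would enumerate which diagrams $\sigma$ with $|\sigma|=2$ satisfy the three conditions of Proposition \ref{corol_decomp_N}. The constraint that every row has an even number of boxes forces a single row of length $2$, since a diagram with two boxes is either the single row $(2)$ or the single column $(1,1)$; the column has two rows, each of length $1$, which is odd, so it is excluded. The diagram $(2)$ has its one row of length $2 \le 2n$ (using $n \ge 1$) and has a single row, so the bound $e$ on the number of rows is satisfied for every $e \ge 1$. Hence the only admissible $\sigma$ in degree $2$ is the horizontal strip with two boxes, and Proposition \ref{corol_decomp_N} gives $\N^1(A^e) \simeq \mathbb{S}_{(2)}(W)$ as $\GL(W)$-modules.

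Finally I would identify $\mathbb{S}_{(2)}(W)$ with $\Sym^2 W$. This is the standard fact that the Schur functor attached to the single-row partition $(d)$ is the $d$-th symmetric power, so $\mathbb{S}_{(2)} = \Sym^2$; combining this with the previous step yields $\N^1(A^e) \simeq \Sym^2 W$, which is the claim. I expect no serious obstacle here: the entire argument is a bookkeeping exercise in reading off the degree-$2$ part of an already-established decomposition. The only point requiring a moment's care is confirming that the grading by number of boxes matches the grading of $\N^\bullet(A^e)$ by cohomological degree, and that the parity condition genuinely eliminates the column diagram $(1,1)$, which it does precisely because a column of height two has two rows of odd length one.
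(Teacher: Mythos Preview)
Your proof is correct and follows essentially the same approach as the paper's: you apply Proposition~\ref{corol_decomp_N} in degree $|\sigma|=2$, observe that the parity condition forces $\sigma=(2)$, and identify $\mathbb{S}_{(2)}(W)=\Sym^2 W$. The paper's proof is simply a one-line version of this argument.
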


\begin{proof}
Le tableau de Young ayant une ligne \`a $2$ cases correspond au $\GL(W)$-module $\Sym^2 W$, et l'on a donc
$ \N^1(A^e)=\Sym^2 W$.
\end{proof}

De plus, on a le r\'esultat suivant \cite[2.3.8]{Weyman}.
\begin{prop}
\label{young_decomp}
On a un isomorphisme de $\GL(W)$-modules
\begin{equation*}
\Sym^\bullet ( \N^1(A^e))=\bigoplus_{\sigma} \mathbb{S}_\sigma(W),
\end{equation*}
o\`u l'on note $\mathbb{S}_\sigma$ le foncteur de Schur correspondant au tableau de Young $\sigma$ et o\`u
l'on prend la somme sur les $\sigma$ tels que 
\begin{enumerate}
\item chaque ligne de $\sigma$ a un nombre pair d'\'el\'ements, 
\item le nombre de lignes est au plus $e$.
\end{enumerate}
\end{prop}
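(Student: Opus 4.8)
The plan is to reduce the statement to a classical plethysm computation and then settle that computation by symmetric-function theory.

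First I would use Corollaire \ref{corol_sym2}, which gives $\N^1(A^e) \simeq \Sym^2 W$ as $\GL(W)$-modules, where $W = W_{\RR}$ is of dimension $e$. Hence
\[
\Sym^\bullet(\N^1(A^e)) \simeq \Sym^\bullet(\Sym^2 W),
\]
and the problem becomes the purely representation-theoretic task of decomposing the symmetric algebra of $\Sym^2 W$ into Schur functors $\mathbb{S}_\sigma(W)$. Since everything is now about polynomial representations of $\GL(W)$, and such representations are determined by their characters, I would carry out the whole computation at the level of characters.

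Next I would compute the graded character. Writing $x_1, \dots, x_e$ for the eigenvalues of a diagonalisable $g \in \GL(W)$, the module $\Sym^2 W$ has character $\sum_{i \le j} x_i x_j$, so the graded character of $\Sym^\bullet(\Sym^2 W)$ is the generating series
\[
\prod_{1 \le i \le j \le e} \frac{1}{1 - x_i x_j}.
\]
On the other side, for a partition $\mu$ the character of $\mathbb{S}_\mu(W)$ is the Schur polynomial $s_\mu(x_1, \dots, x_e)$, which vanishes as soon as $\mu$ has more than $e$ rows. The proposition is therefore equivalent to Littlewood's identity
\[
\prod_{1 \le i \le j \le e} \frac{1}{1 - x_i x_j} = \sum_{\lambda} s_{2\lambda}(x_1, \dots, x_e),
\]
the sum being over all partitions $\lambda$, so that $2\lambda = (2\lambda_1, 2\lambda_2, \dots)$ ranges over partitions with all parts even. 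The two numbered conditions of the statement can then be read off directly: \og every row even \fg{} is the constraint $\sigma = 2\lambda$, and \og at most $e$ rows \fg{} is the automatic vanishing of $s_\sigma$ in $e$ variables when $\ell(\sigma) > e$.

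The main obstacle is the last identity itself, which is exactly the content of \cite[2.3.8]{Weyman}. For a self-contained proof I would establish it via the Weyl character formula, writing each $s_{2\lambda}$ as a ratio of alternants with common Vandermonde denominator and summing the resulting series over $\lambda$ into a single alternant that matches the left-hand product; alternatives are the Cauchy identity combined with a Pfaffian evaluation, or an RSK-type bijection on symmetric nonnegative integer matrices. Everything preceding this step --- the identification $\N^1(A^e) = \Sym^2 W$, the passage to characters, and the vanishing of Schur functors in too few variables --- is routine, so the only real work lies in Littlewood's identity; as the paper needs only the statement, citing \cite[2.3.8]{Weyman} is the economical route.
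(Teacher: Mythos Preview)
Your proposal is correct and matches the paper's approach: the paper gives no proof at all for this proposition, simply citing \cite[2.3.8]{Weyman} as the source of the result. Your reduction via Corollaire~\ref{corol_sym2} to the plethysm $\Sym^\bullet(\Sym^2 W)$ and then to Littlewood's identity is exactly the content behind that citation, so you have supplied the explanation the paper omits while landing on the same reference.
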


Comme $\Sym^{2k}W$ correspond au tableau de Young ayant une ligne \`a $2k$ cases, on
voit qu'il existe une injection canonique
\begin{equation}
\label{rem_plong_sym} 
\varepsilon_{k}:  \Sym^{2k}W \to \Sym^k (\N^1(A^e)) .
\end{equation}
Par un r\'esultat d'Abeasis \cite[Thm. 3.1]{Abeasis}, on en d\'eduit le r\'esultat suivant.
\begin{prop}
\label{prop_abeasis}
Soit $k$ un entier positif fix\'e, et soit $I_k$ l'id\'eal engendr\'e par $\varepsilon_k(\Sym^{2k} W)$ dans $\Sym^\bullet(\N^1(A^e))$. Alors
$I_k$ est $\GL(W)$-invariant et
 \[I_k = \bigoplus_\sigma \mathbb{S}_{\sigma}(W),\]
o\`u l'on note $\mathbb{S}_\sigma$ le foncteur de Schur correspondant au tableau de Young $\sigma$ et o\`u
l'on prend la somme sur les $\sigma$ tels que 
\begin{enumerate}
\item chaque ligne de $\sigma$ a un nombre pair d'\'el\'ements, 
\item la premi\`ere ligne a au moins $2k$ \'el\'ements et
\item le nombre des lignes est au plus $e$.
\end{enumerate}
\end{prop}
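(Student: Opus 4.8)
Le plan est d'appliquer le théorème d'Abeasis \cite[Thm. 3.1]{Abeasis}, qui classifie les idéaux $\GL(W)$-invariants de l'algèbre $\Sym^\bullet(\Sym^2 W)=\Sym^\bullet(\N^1(A^e))$, après deux préliminaires formels. On commencerait par établir l'invariance de $I_k$. L'application $\varepsilon_k$ de (\ref{rem_plong_sym}) étant $\GL(W)$-équivariante et $\Sym^{2k}W=\mathbb{S}_{(2k)}(W)$ étant irréductible, son image $\varepsilon_k(\Sym^{2k}W)$ est le sous-module $\mathbb{S}_{(2k)}(W) \subset \Sym^k(\N^1(A^e))$ associé au tableau de Young à une seule ligne de $2k$ cases. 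Comme la multiplication de l'algèbre $\Sym^\bullet(\N^1(A^e))$ est $\GL(W)$-équivariante, l'idéal engendré par un sous-module $\GL(W)$-invariant est lui aussi $\GL(W)$-invariant, ce qui donne la première assertion.

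On utiliserait ensuite le fait que la décomposition de la proposition \ref{young_decomp} est sans multiplicité: chaque foncteur de Schur $\mathbb{S}_{\sigma}(W)$, avec $\sigma$ à lignes paires et ayant au plus $e$ lignes, apparaît exactement une fois dans $\Sym^\bullet(\N^1(A^e))$. Par conséquent, tout sous-espace $\GL(W)$-invariant, et en particulier $I_k$, est la somme directe des $\mathbb{S}_{\sigma}(W)$ qu'il contient; autrement dit $I_k=\bigoplus_{\sigma \in T}\mathbb{S}_{\sigma}(W)$ pour un unique ensemble $T$ de tableaux à lignes paires et ayant au plus $e$ lignes (ce qui fournit déjà les conditions (1) et (3)). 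Il ne reste qu'à identifier $T$.

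Pour cela, on invoquerait le théorème d'Abeasis: les idéaux $\GL(W)$-invariants de $\Sym^\bullet(\Sym^2 W)$ sont exactement les sommes $\bigoplus_{\sigma \in F}\mathbb{S}_{\sigma}(W)$, où $F$ parcourt les parties de l'ensemble des tableaux à lignes paires qui sont closes par le haut pour l'ordre d'inclusion des diagrammes de Young (si $\sigma \in F$ et $\sigma \subseteq \sigma'$, alors $\sigma' \in F$), l'idéal engendré par $\mathbb{S}_{\tau}(W)$ correspondant au filtre principal $\{\sigma : \tau \subseteq \sigma\}$. Appliqué à $\tau=(2k)$, ceci donnerait $T=\{\sigma : (2k)\subseteq \sigma\}$. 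Comme $(2k)$ n'a qu'une seule ligne, la condition $(2k)\subseteq \sigma$ équivaut à $\sigma_1 \ge 2k$, c'est-à-dire que la première ligne de $\sigma$ a au moins $2k$ cases: c'est précisément la condition (2), ce qui achèverait la preuve.

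La partie la plus délicate est la traduction correcte du théorème d'Abeasis dans ce cadre. Il faut notamment s'assurer que l'ordre pertinent est bien l'inclusion des diagrammes, et non l'ordre de dominance: ce dernier préserve le degré total et serait incompatible avec le fait que $I_k$, étant un idéal, contient des modules de tous les degrés $\ge k$. Il faut aussi vérifier que la troncature à au plus $e$ lignes est automatiquement respectée, ce qui provient de ce que $\mathbb{S}_{\sigma}(W)=0$ dès que $\sigma$ a plus de $\dim W=e$ lignes. On peut contrôler la cohérence sur un petit exemple: pour $k=2$, l'idéal engendré par $\mathbb{S}_{(4)}(W)$ rencontre la composante $\Sym^2(\N^1(A^e))=\mathbb{S}_{(4)}(W)\oplus \mathbb{S}_{(2,2)}(W)$ exactement en $\mathbb{S}_{(4)}(W)$, excluant bien $\mathbb{S}_{(2,2)}(W)$, dont la première ligne n'a que $2<4$ cases, en accord avec la description annoncée.
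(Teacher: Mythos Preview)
Your proposal is correct and follows the same approach as the paper: the paper itself gives no proof beyond the sentence ``Par un r\'esultat d'Abeasis \cite[Thm.~3.1]{Abeasis}, on en d\'eduit le r\'esultat suivant'', and your argument is a sound elaboration of how that deduction goes (invariance of $I_k$, multiplicity-freeness of the decomposition of Proposition~\ref{young_decomp}, then identification of the filter $\{\sigma : (2k)\subseteq\sigma\}$ via Abeasis). Your sanity checks on the ordering and on the $e$-row truncation are apt.
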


Le r\'esultat principal de Thompson se traduit alors comme suit dans notre cadre \cite[Thm. 2.3]{Thompson}.
\begin{thm}
\label{thm_Thompson}
Soit $A$ une vari\'et\'e ab\'elienne principalement polaris\'ee tr\`es g\'en\'erale de dimension $n$, et
soit $I_{n+1}$ l'id\'eal engendr\'e par $\varepsilon_{n+1}(\Sym^{2n+2} W)$ dans $\Sym^\bullet(\N^1(A^e))$.
Alors 
\[\Sym^\bullet(\N^1(A^e))/I_{n+1}    = \N^\bullet(A^e).\]
En particulier, l'application
\[\Sym^k \N^1(A^e) \to \N^k(A^e)\]
est un isomorphisme si et seulement si
$k \in \{0, \dots,n\}$.
\end{thm}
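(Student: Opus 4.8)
The plan is to reduce the statement to a purely representation-theoretic bookkeeping, assembling the three decompositions already at hand (Propositions \ref{corol_decomp_N}, \ref{young_decomp} and \ref{prop_abeasis}) once the grading conventions are aligned. Throughout I work with the canonical map of $\RR$-algebras $\pi\colon \Sym^\bullet(\N^1(A^e)) \to \N^\bullet(A^e)$, which is surjective and $\GL(W)$-equivariant by Theorem \ref{Tankeev}; by Corollary \ref{corol_sym2} the source is $\Sym^\bullet(\Sym^2 W)$, so both sides are graded $\GL(W)$-modules and $\pi$ respects the grading.

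The first step is to check $I_{n+1} \subseteq \ker \pi$. Since $\pi$ is a ring homomorphism, $\ker \pi$ is an ideal, so it suffices to see that the generating subspace $\varepsilon_{n+1}(\Sym^{2n+2} W)$ maps to zero. Now $\Sym^{2n+2} W$ is the irreducible $\mathbb{S}_{(2n+2)}(W)$ attached to a single row of $2n+2$ boxes, and by Proposition \ref{corol_decomp_N} no Schur module whose first row exceeds $2n$ occurs in $\N^\bullet(A^e)$. As $\pi$ is $\GL(W)$-equivariant and this module is irreducible, its image is either zero or an isomorphic copy; the latter is excluded, so the image is zero. Hence $\varepsilon_{n+1}(\Sym^{2n+2} W) \subseteq \ker \pi$, and therefore $I_{n+1} \subseteq \ker \pi$, which yields an induced graded surjection $\bar\pi\colon \Sym^\bullet(\N^1(A^e))/I_{n+1} \to \N^\bullet(A^e)$.

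The second step is to prove that $\bar\pi$ is an isomorphism by matching decompositions degree by degree. Removing from Proposition \ref{young_decomp} the summands listed in Proposition \ref{prop_abeasis}, the quotient $\Sym^\bullet(\N^1(A^e))/I_{n+1}$ is isomorphic as a $\GL(W)$-module to $\bigoplus_\sigma \mathbb{S}_\sigma(W)$, summed over partitions $\sigma$ with even rows, at most $e$ rows, and first row of length $<2n+2$, i.e. $\le 2n$. This is exactly the index set of Proposition \ref{corol_decomp_N}, so the two sides of $\bar\pi$ carry the same multiplicity-one decomposition. Here I must pin down the grading: since $\mathbb{S}_\sigma(W)$ with $|\sigma|=2k$ sits in graded degree $k$ on both sides ($\N^1=\Sym^2 W$ contributes two boxes per factor, matching $|\sigma|=2k$ in $\N^k(A^e)$), the map $\bar\pi$ is graded and in each degree is a surjection between finite-dimensional spaces of equal dimension, hence an isomorphism. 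Consequently $\ker \pi = I_{n+1}$ and $\Sym^\bullet(\N^1(A^e))/I_{n+1} = \N^\bullet(A^e)$.

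For the final assertion I would compute the graded piece $(I_{n+1})_k$ from Proposition \ref{prop_abeasis}: it is the sum of $\mathbb{S}_\sigma(W)$ over $\sigma$ with $|\sigma|=2k$, even rows, at most $e$ rows, and first row $\ge 2n+2$. Such a $\sigma$ exists precisely when $2k \ge 2n+2$, i.e. $k \ge n+1$ (take the single row $(2k)$, for which $\Sym^{2k}W \ne 0$ as $W \ne 0$), and no such $\sigma$ exists when $k \le n$. Thus the kernel of $\Sym^k \N^1(A^e) \to \N^k(A^e)$ vanishes exactly for $0 \le k \le n$; combined with surjectivity, the map is an isomorphism if and only if $k \in \{0,\dots,n\}$. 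The argument is essentially bookkeeping: the only genuinely delicate points are the alignment of the two gradings and the verification that $\varepsilon_{n+1}(\Sym^{2n+2}W)$ dies under $\pi$, which I expect to be the crux; everything else is a term-by-term comparison of the cited decompositions.
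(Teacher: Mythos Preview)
Your argument is correct and is exactly the deduction the paper sets up: the paper states Propositions \ref{corol_decomp_N}, \ref{young_decomp} and \ref{prop_abeasis} precisely so that the theorem follows by the multiplicity-free bookkeeping you carry out, and then simply cites \cite[Thm.~2.3]{Thompson} in lieu of spelling it out. Your proof fills in that omitted step faithfully; the only comment is that the paper takes the conclusion as a direct translation of Thompson's result rather than re-deriving it, so your write-up is in fact more detailed than what the paper provides.
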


\begin{corol}
\label{corol_injection}
Pour toute classe $\alpha \in \N^1(A^e)$ non nulle et pour tout 
$0 \le k \le n-1$, l'application
\begin{eqnarray*}
 \N^k(A^e) & \to & \N^{k+1}(A^e) \\
\beta &\mapsto& \alpha \cdot \beta
\end{eqnarray*}
est injective.
\end{corol}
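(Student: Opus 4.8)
The plan is to reduce the injectivity statement to the fact that a polynomial ring is an integral domain, using the identifications furnished by Theorem~\ref{thm_Thompson}.

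First I would note that, since $0 \le k \le n-1$, both $k$ and $k+1$ lie in the range $\{0,\dots,n\}$. By Theorem~\ref{thm_Thompson} the canonical maps $\Sym^k \N^1(A^e) \to \N^k(A^e)$ and $\Sym^{k+1} \N^1(A^e) \to \N^{k+1}(A^e)$ are therefore both isomorphisms. In particular, for $k=1$ this shows that the given nonzero class $\alpha \in \N^1(A^e)$ corresponds to a nonzero element of $\Sym^1 \N^1(A^e) = \N^1(A^e)$, which I keep denoting $\alpha$.

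Next I would use that the canonical surjection $\Sym^\bullet \N^1(A^e) \to \N^\bullet(A^e)$ is a homomorphism of graded $\RR$-algebras, its kernel being the ideal $I_{n+1}$. Consequently, multiplication by $\alpha$ is compatible with the identifications above: under the two isomorphisms of the previous step, the map $\beta \mapsto \alpha \cdot \beta$ from $\N^k(A^e)$ to $\N^{k+1}(A^e)$ corresponds exactly to multiplication by $\alpha$ from $\Sym^k \N^1(A^e)$ to $\Sym^{k+1} \N^1(A^e)$. So it suffices to prove that this latter map is injective.

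Finally, I would invoke that $\Sym^\bullet \N^1(A^e)$ is the symmetric algebra on a finite-dimensional real vector space, hence (non-canonically) a polynomial ring $\RR[x_1,\dots,x_N]$ with $N=\dim_\RR \N^1(A^e)$. Such a ring is an integral domain, so multiplication by the nonzero element $\alpha$ has trivial kernel; in particular its restriction $\Sym^k \N^1(A^e) \to \Sym^{k+1} \N^1(A^e)$ is injective. Transporting back through the isomorphisms yields the injectivity of $\beta \mapsto \alpha \cdot \beta$ on $\N^k(A^e)$, as claimed. There is no serious obstacle here: the only points requiring care are that one stays within the degree range $\{0,\dots,n\}$ where Theorem~\ref{thm_Thompson} gives isomorphisms, which is exactly why the hypothesis $k \le n-1$ appears, and that multiplication by $\alpha$ genuinely commutes with those identifications, which follows from the quotient map being a ring homomorphism. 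The essential content is thus entirely contained in Theorem~\ref{thm_Thompson}; everything else is the elementary domain property of the symmetric algebra.
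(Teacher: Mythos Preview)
Your argument is correct and is exactly the intended one: the paper states this as an immediate corollary of Theorem~\ref{thm_Thompson} without giving any proof, and the only reasonable reading is precisely your reduction to the integrality of the polynomial ring $\Sym^\bullet \N^1(A^e)$ via the degree-wise isomorphisms $\Sym^j \N^1(A^e)\simeq \N^j(A^e)$ for $j\le n$.
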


D\'eterminons maintenant des g\'en\'erateurs de $\N^\bullet(A^e)$.
Comme $A$ est principalement polaris\'ee, on peut identifier $A$ avec sa vari\'et\'e ab\'elienne duale $\widehat A$,
ce qui nous permet de voir le fibr\'e de Poincar\'e $\mathcal{P}$ comme un
fibr\'e sur $A \times A$.
Soient $p_{i_1, \dots, i_l}:A^e \to A^l$ les projections.
Posons
\[\theta_i=p_i^*\theta \qquad \text{et} \qquad \lambda_{jk}=c_1(p_{jk}^*\mathcal{P}).\]
\begin{rem}
\label{rem_kuenneth}
Notons $A_i$ le $i$-\`eme facteur de $A^e$. 
Dans la d\'ecomposition de K\"unneth de $H^2(A^e, \RR)$, la classe $\theta_i$ est contenue dans la composante correspondant
 \`a
$H^2(A_i, \RR)$, et $\lambda_{jk}$ appartient \`a la composante correspondant \`a $H^1(A_j , \RR) \otimes H^1(A_k, \RR)$ \cite[Lemma 14.1.9]{BL}.
\end{rem}
On a la g\'en\'eralisation suivante de la proposition 3.1 de \cite{DELV}.
\begin{prop}
\label{thm_gen}
Soit $(A, \theta)$ une vari\'et\'e ab\'elienne principalement polaris\'ee tr\`es g\'en\'erale.
La $\RR$-alg\`ebre
$\N^\bullet(A^e)$ est engendr\'ee par les $\theta_i, 1\le i \le e$ et les $\lambda_{jk}, 1\le j<k \le e$.
\end{prop}

\begin{proof}
Par le corollaire \ref{corol_sym2}, on a $\N^1(A^e)\simeq \Sym^2 W$ et donc
\[\dim(\N^1(A^e))=\binom{e+1}{2}.\]
 Comme les $\theta_i, \lambda_{jk}, 1 \le i \le e, 1 \le j< k \le e$ forment une famille
libre de dimension $\binom{e}{2}+e=\binom{e+1}{2}$ dans $\N^1(A^e)$ par la remarque \ref{rem_kuenneth}, on obtient le r\'esultat souhait\'e.
\end{proof}

D\'eterminons maintenant comment $\GL(W)$ agit sur la base 
\[\{\theta_i, \lambda_{jk} \mid 1 \le i \le e, 1 \le j< k \le e\}\]
de $\N^1(A^e)$. Soit
\begin{equation}
\label{eqn_base}
v_{sr}=u_s \otimes w_r \hspace{5mm} 1 \le s \le n, 1 \le r \le e
\end{equation}
une base
de $V^*=U^* \otimes_{\RR} W$ telle que l'on a dans les coordonn\'ees associ\'ees (cf. \cite[Lemme 3.6.4]{BL})
\begin{eqnarray}
\label{coord_classes1}
\theta_i & = & \sum_{s=1}^n \sqrt{-1} dz_{si} \wedge d\bar z_{si}, \\
\lambda_{jk} & = & \sum_{s=1}^n  \sqrt{-1}dz_{sj}  \wedge d\bar z_{sk}  + \sqrt{-1}dz_{sk}  \wedge d\bar z_{sj} \notag
\end{eqnarray}
pour $1\le i \le e, 1\le j<k \le e$.

\begin{prop}
\label{prop_iso_sym_explicite}
Soit $g \in \GL(W)$ et soit $\rho(g)$ la matrice repr\'esentant $g$ dans la base $\{w_1, \dots, w_e\}$.
Sur l'ensemble des g\'en\'erateurs $\{\theta_i, \lambda_{jk} \mid 1 \le i \le e ,1 \le j < k \le e\}$
de $\N^\bullet(A^e)$, la matrice $\rho(g)$ agit par:
\begin{align*}
g\theta_i &=   \sum_{j=1}^e \rho(g)_{ji}^2 \theta_j + \sum_{1 \le j < k \le e} \rho(g)_{ji}\rho(g)_{ki} \lambda_{jk}, \\
g \lambda_{jk} &= 2\sum_{i=1}^e \rho(g)_{ij}\rho(g)_{ik} \theta_l + \sum_{1 \le u < v \le e}  (\rho(g)_{vj}\rho(g)_{uk}+\rho(g)_{uj}\rho(g)_{vk}) \lambda_{uv}.  
\end{align*}
\end{prop}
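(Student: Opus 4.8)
The plan is to reduce the statement to the action of $g$ on the degree-one classes $dz_{sr}, d\bar z_{sr}$ and then to exploit that pullback by an isogeny is compatible with the cup product, so that $g$ acts as an $\RR$-algebra automorphism of $\N^\bullet(A^e)$.

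First I would determine how $g$ acts on $H^1(A^e) = V^*$ in the coordinates associated to the basis $\{v_{sr}\}$. By definition $g\cdot\alpha = u_g^*\alpha$, where $u_g$ acts only on the $W$-factor, by $w \mapsto \transpose g w$, hence by the matrix $\transpose\rho(g)$ in the basis $\{w_1,\dots,w_e\}$. Dualizing, the pullback $u_g^*$ acts on the $W$-factor of $V^*$ by the transpose of this matrix, namely $\transpose(\transpose\rho(g)) = \rho(g)$; this is exactly the tautological action of $\GL(W)$ recalled in Section~\ref{section_struc_puiss}, the transpose in the definition of $u_g$ being precisely what turns the dual action back into the standard one. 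Concretely this gives
\[ g\cdot dz_{sr} = \sum_{j=1}^e \rho(g)_{jr}\, dz_{sj}, \]
and, since $\rho(g)$ has real entries (because $g\in\GL(W)$ is real), the same relation holds for the conjugate coordinates $d\bar z_{sr}$.

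Next I would substitute these two relations into the coordinate expressions (\ref{coord_classes1}) for $\theta_i$ and $\lambda_{jk}$, using $g(\gamma\wedge\delta) = g\gamma\wedge g\delta$, and expand. For $\theta_i$ this produces the double sum $\sum_s \sum_{j,k} \rho(g)_{ji}\rho(g)_{ki}\,\sqrt{-1}\,dz_{sj}\wedge d\bar z_{sk}$; the diagonal part $j=k$ collects into $\sum_j \rho(g)_{ji}^2\,\theta_j$, while pairing each index couple $(j,k)$ with $(k,j)$ for $j<k$ collects the off-diagonal part into $\sum_{j<k}\rho(g)_{ji}\rho(g)_{ki}\,\lambda_{jk}$, which is the announced formula. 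The computation for $\lambda_{jk}$ is of the same nature: expanding both of its summands yields the coefficient $\rho(g)_{uj}\rho(g)_{vk} + \rho(g)_{uk}\rho(g)_{vj}$ in front of $\sqrt{-1}\,dz_{su}\wedge d\bar z_{sv}$, whose diagonal part $u=v$ gives $2\sum_i \rho(g)_{ij}\rho(g)_{ik}\,\theta_i$ and whose off-diagonal part, after pairing $(u,v)$ with $(v,u)$, reassembles into $\sum_{u<v}(\rho(g)_{vj}\rho(g)_{uk} + \rho(g)_{uj}\rho(g)_{vk})\,\lambda_{uv}$.

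The only genuinely delicate step is the first one: making sure that the transpose in the definition of $u_g$ together with the dualization in $u_g^*$ really produces the index placement $g\cdot dz_{sr} = \sum_j \rho(g)_{jr}\,dz_{sj}$, rather than $\transpose\rho(g)$ or $\rho(g)^{-1}$. Once this is settled the rest is pure bookkeeping, made painless by the fact that the symmetric shape $dz_{sj}\wedge d\bar z_{sk} + dz_{sk}\wedge d\bar z_{sj}$ appearing in $\lambda_{jk}$ is exactly what is reproduced when the expanded off-diagonal wedge products are regrouped.
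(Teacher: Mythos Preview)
Your proposal is correct and is precisely the ``calcul direct dans les coordonn\'ees (\ref{coord_classes1})'' that the paper's one-line proof invokes; you have simply spelled out the computation that the paper leaves to the reader. Your care with the first step---checking that the transpose in the definition of $u_g$ and the dualization in $u_g^*$ combine to give the tautological action $g\cdot dz_{sr}=\sum_j\rho(g)_{jr}\,dz_{sj}$---is exactly the point the paper records in Section~\ref{section_struc_puiss} when it says that $\GL(W_\QQ)$ acts tautologically on the $W$-factor of $H^1(A^e,\QQ)$.
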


\begin{proof}
Cela r\'esulte d'un calcul direct dans les coordonn\'ees (\ref{coord_classes1}) ou
des arguments de   \cite[\S 3]{DELV}.
\end{proof}

\begin{rem}
\label{rem_identifi}
Si $\{w_1, \dots, w_e\}$ est une base de $W$, il d\'ecoule de la proposition \ref{prop_iso_sym_explicite}
que l'on obtient l'isomorphisme $\N^1(A^e)\simeq \Sym^2W$ en identifiant 
$\theta_i$ avec $w_i^2$ et $\lambda_{jk}$ avec $2w_jw_k$ pour $1\le i \le e, 1\le <j<k \le e$.
\end{rem}

Soit $m: A^e  \to A$ l'application d'addition.
 Alors on a 
\[m^*\theta=\sum_{i=1}^e \theta_i + \sum_{1 \le j < k \le e} \lambda_{jk}\] 
et donc
\begin{equation}
\label{eqn_rel1}
\left(\sum_{i=1}^e \theta_i + \sum_{1 \le j < k \le e} \lambda_{jk} \right)^{n+1}=0
\end{equation}
dans $H^{2n+2}(A^e, \RR)$.
Regardons  maintenant $ \left(\sum_{i=1}^e \theta_i + \sum_{1 \le j<k \le e} \lambda_{jk} \right)^{n+1}$ comme polyn\^ome dans $\Sym^{n+1} \N^1(A^e)$ et
\'ecrivons
\[\left(\sum_{i=1}^e \theta_i + \sum_{1 \le j<k \le e} \lambda_{jk} \right)^{n+1}=\sum_l P_l,\]
o\`u chaque polyn\^ome $P_l$ d\'efinit une classe dans un facteur de la d\'ecomposition de K\"unneth de $H^{2n+2}(A^e,\RR)$ 
(cf. remarque \ref{rem_kuenneth} et exemple \ref{ex_dim2}). On obtient 
ainsi pr\'ecis\'ement un polyn\^ome $P_l \not =0$ pour chaque composante de K\"unneth, et par la relation (\ref{eqn_rel1}), chaque polyn\^ome
$P_l$ doit repr\'esenter la classe $0$ dans $H^{2n+2}(A^e, \RR)$. On appelle
les relations $P_l=0$ dans $H^{2n+2}(A^e, \RR)$ ainsi obtenues, les \emph{relations induites par la relation} (\ref{eqn_rel1}).

\begin{corol}
\label{corol_rel_explicites}
Soit $A$ une vari\'et\'e ab\'elienne principalement polaris\'ee tr\`es g\'en\'erale de dimension $n$.
Soit $I$ l'id\'eal des relations dans $\Sym^\bullet \N^1(A^e)$, de sorte que
 $\Sym^\bullet \N^1(A^e)/I=\N^\bullet(A^e)$. Alors $I$ est engendr\'e
par les relations induites par la relation
\begin{equation}
\label{eqnkunneth}
\left(\sum_{i=1}^e \theta_i + \sum_{1 \le j<k \le e} \lambda_{jk} \right)^{n+1}=0
\end{equation}
dans  $H^{2n+2}(A^e, \RR)$ via la d\'ecomposition de K\"unneth.
\end{corol}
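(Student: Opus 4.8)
The plan is to reduce the statement to Theorem \ref{thm_Thompson} and then to check, by an explicit but painless computation in the $\Sym^2 W$ picture, that the Künneth components $P_l$ of $(m^*\theta)^{n+1}$ already span the irreducible module generating $I_{n+1}$. First I would invoke Theorem \ref{thm_Thompson}: the kernel $I$ of $\Sym^\bullet \N^1(A^e) \to \N^\bullet(A^e)$ equals $I_{n+1}$, the ideal generated by the submodule $M := \varepsilon_{n+1}(\Sym^{2n+2}W) \subset \Sym^{n+1}(\N^1(A^e))$. Since the inclusion $\{P_l\} \subset I$ is clear (each $P_l$ is a Künneth component of a class that vanishes in $H^{2n+2}(A^e,\RR)$, and distinct Künneth components are independent), it suffices to prove the reverse inclusion, and for this it is enough to show that the family $\{P_l\}$ spans $M$.

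Next I would translate $m^*\theta$ into the $\Sym^2 W$ picture. By Remark \ref{rem_identifi}, the isomorphism $\N^1(A^e) \simeq \Sym^2 W$ sends $\theta_i$ to $w_i^2$ and $\lambda_{jk}$ to $2 w_j w_k$, so that $m^*\theta = \sum_i \theta_i + \sum_{j<k}\lambda_{jk}$ corresponds to $\sum_i w_i^2 + \sum_{j<k} 2 w_j w_k = q^2$, where $q := w_1 + \cdots + w_e$. Taking $(n+1)$-st powers, $(m^*\theta)^{n+1}$ corresponds to $(q^2)^{n+1}$, which is the image of $q^{2n+2}$ under the canonical injection $\varepsilon_{n+1}$: indeed the power map $\ell^{2n+2} \mapsto (\ell^2)^{n+1}$ is a nonzero $\GL(W)$-equivariant map landing in the single-row summand $\Sym^{2n+2}W$, which occurs with multiplicity one in $\Sym^{n+1}(\Sym^2 W)$ by Proposition \ref{young_decomp}, hence coincides with $\varepsilon_{n+1}$ up to a scalar. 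In particular $(m^*\theta)^{n+1} \in M$.

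Finally I would match the Künneth components with monomials and conclude. By Remark \ref{rem_kuenneth}, $\theta_i$ lies in the factor $H^2(A_i)$ and $\lambda_{jk}$ in $H^1(A_j) \otimes H^1(A_k)$, so the Künneth multidegree of a monomial in the generators equals the multidegree of the corresponding monomial in the $w_i$; that is, the Künneth grading is exactly the multidegree grading on $\Sym^\bullet(\N^1(A^e))$, and $M$, being a $\GL(W)$-submodule, is homogeneous for it. Expanding $q^{2n+2} = (w_1 + \cdots + w_e)^{2n+2} = \sum_{|\alpha| = 2n+2} \binom{2n+2}{\alpha} w^\alpha$, the component of multidegree $\alpha$ is $P_\alpha = \binom{2n+2}{\alpha}\,\varepsilon_{n+1}(w^\alpha)$. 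Since every multinomial coefficient is strictly positive, $\{w^\alpha : |\alpha| = 2n+2\}$ is a basis of $\Sym^{2n+2}W$, and $\varepsilon_{n+1}$ is injective, the $P_\alpha$ form a basis of $M$. Hence the ideal they generate is the ideal generated by $M$, namely $I_{n+1} = I$.

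The only genuinely delicate point is the bookkeeping in the third step: identifying the Künneth grading on $H^{2n+2}(A^e,\RR)$ with the multidegree grading on $\Sym^\bullet \N^1(A^e)$, so that the $P_l$ are precisely the multidegree-homogeneous pieces, and confirming that the power map is $\varepsilon_{n+1}$. Both are essentially formal once Remarks \ref{rem_identifi} and \ref{rem_kuenneth} are in hand. The observation that makes everything collapse is that all multinomial coefficients $\binom{2n+2}{\alpha}$ are nonzero: this forces the Künneth components to already exhaust a basis of the \emph{irreducible} module $M$, so that no further appeal to the $\GL(W)$-orbit of $(m^*\theta)^{n+1}$ is needed.
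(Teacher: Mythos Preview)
Your argument is correct and follows essentially the same route as the paper: both reduce to Theorem \ref{thm_Thompson} and then carry out a dimension count showing that the K\"unneth pieces $P_l$ span the degree-$(n+1)$ relation space $\varepsilon_{n+1}(\Sym^{2n+2}W)$. Your version is slightly more explicit---identifying $(m^*\theta)^{n+1}$ with $\varepsilon_{n+1}(q^{2n+2})$ and reading off $P_\alpha = \binom{2n+2}{\alpha}\varepsilon_{n+1}(w^\alpha)$ from the multinomial expansion---which in particular supplies the justification (left implicit in the paper) that every $P_l$ is nonzero.
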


\begin{proof}
On a 
\[\binom{2n+1+e}{2n+2}=\dim \Sym^{2n+2}W\]
composantes dans la d\'ecomposition de K\"unneth de $H^{2n+2}(A^e, \RR)$, de sorte que l'\'equation (\ref{eqnkunneth})  induit 
$\binom{2n+1+e}{2n+2}$ relations qui forment une famille libre dans 
\\ $\Sym^{n+1}(\N^1(A^e))$. Par
la proposition \ref{prop_abeasis} et le th\'eor\`eme \ref{thm_Thompson}, on a
$I \cap \Sym^{n+1}(\N^1(A^e)) \simeq \Sym^{2n+2}W$ , d'o\`u l'on d\'eduit  le r\'esultat souhait\'e
 par une comparaison des dimensions.
\end{proof} 

\begin{ex}
\label{ex_dim2}
Supposons $n=2$. On d\'etermine les relations dans $\Sym^3 \N^1(A \times A)$ selon le corollaire \ref{corol_rel_explicites}. On a 
\[(\theta_1+\theta_2+\lambda)^3=\theta_1^3 + \theta_2^3 +\lambda^3+3\theta_1^2\theta_2 + 3\theta_1\theta_2^2 +3\theta_1\lambda^2 +3\theta_2\lambda^2+6\theta_1\theta_2\lambda\]
et
\begin{alignat*}{2}
 \theta_1^3 & \in H^6(A) \otimes H^0(A),  \qquad & \theta_2^3  &\in H^0(A) \otimes H^6(A),  \\
 3\theta_1^2\theta_2+3\theta_1\lambda^2   & \in H^4(A) \otimes H^2(A),  & 3\theta_1\theta_2^2+3\theta_2\lambda^2  &\in H^2(A) \otimes H^4(A), \\
 3\theta_1^2 \lambda  & \in H^5(A)\otimes H^1(A),  & 3\theta_2^2 \lambda   &\in H^1(A) \otimes H^5(A), \\
6\theta_1\theta_2\lambda+\lambda^3 & \in H^3(A) \otimes H^3(A). & & 
\end{alignat*}
Si l'on note $I$ l'id\'eal des relations dans la $\RR$-alg\`ebre $\N^\bullet(A \times A)$, on a donc
\[I= \left \langle \theta_1^3,   \theta_1^2\theta_2+\theta_1\lambda^2, \theta_1^2 \lambda ,6\theta_1\theta_2\lambda+\lambda^3 , \theta_2^3 , \theta_1\theta_2^2+\theta_2\lambda^2,\theta_2^2 \right \rangle.\]
\end{ex}

Finissons par un r\'esultat qui nous permet de faire des calculs d'intersections dans $\N^\bullet(A^e)$.

\begin{prop}
\label{intersections}
 Soit $A$ une vari\'et\'e ab\'elienne principalement polaris\'ee tr\`es g\'en\'erale de dimension $n$. 
Les seuls mon\^omes en $\theta_1, \theta_2$ et $\lambda$ non nuls de degr\'e $2n$ sont
de la forme $\theta_1^{n-k}\theta_2^{n-k} \lambda^{2k}$ pour $k \in \{0, \dots, n\}$, et l'on a
\[\theta_1^{n-k}\theta_2^{n-k} \lambda^{2k}=(-1)^{k}(2k)!(n-k)!^2 \binom{n}{k}.\]
\end{prop}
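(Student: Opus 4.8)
The plan is to split the statement into two independent parts: first use only the K\"unneth bidegrees recorded in Remark~\ref{rem_kuenneth} to see which monomials of degree $2n$ can fail to vanish, and then compute the surviving intersection numbers by expanding directly in the coordinates (\ref{coord_classes1}).

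For the first part I would write an arbitrary monomial as $\theta_1^a\theta_2^b\lambda^c$ with $a+b+c=2n$ and follow its component in the K\"unneth decomposition $H^\bullet(A\times A,\RR)=H^\bullet(A)\otimes H^\bullet(A)$. By Remark~\ref{rem_kuenneth}, $\theta_1$ lies in bidegree $(2,0)$, $\theta_2$ in bidegree $(0,2)$ and $\lambda$ in bidegree $(1,1)$, so the monomial lies in $H^{2a+c}(A)\otimes H^{2b+c}(A)$. As $a+b+c=2n$ the total degree is $4n$, i.e.\ top degree, and since each factor $H^\bullet(A)$ vanishes above degree $2n$ the class can be non-zero only if $2a+c=2b+c=2n$. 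This forces $a=b$ and $c=2(n-a)$; writing $k=n-a\in\{0,\dots,n\}$ gives exactly the monomials $\theta_1^{n-k}\theta_2^{n-k}\lambda^{2k}$.

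For the value I would pass to the coordinates of (\ref{coord_classes1}) and fix the normalisation coming from the principal polarisation: since $\theta^n=n!\prod_{s}\sqrt{-1}\,dz_s\wedge d\bar z_s$ and $\int_A\theta^n=n!$, the reference top form $\Omega=\prod_{s=1}^n(\sqrt{-1}\,dz_{s1}\wedge d\bar z_{s1})(\sqrt{-1}\,dz_{s2}\wedge d\bar z_{s2})$ satisfies $\int_{A\times A}\Omega=1$. Setting $\omega_s=\sqrt{-1}\,dz_{s1}\wedge d\bar z_{s1}$, $\tau_s=\sqrt{-1}\,dz_{s2}\wedge d\bar z_{s2}$, $\alpha_s=\sqrt{-1}\,dz_{s1}\wedge d\bar z_{s2}$ and $\beta_s=\sqrt{-1}\,dz_{s2}\wedge d\bar z_{s1}$, one has $\theta_1=\sum_s\omega_s$, $\theta_2=\sum_s\tau_s$ and $\lambda=\sum_s(\alpha_s+\beta_s)$. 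The decisive observation is a per-index bookkeeping: in the expansion of $\theta_1^{n-k}\theta_2^{n-k}\lambda^{2k}$ a term survives only if the $4n$ one-forms $dz_{si},d\bar z_{si}$ each occur exactly once, i.e.\ each index $s$ supplies its four one-forms once; for a fixed $s$ this happens in precisely two ways, through $\omega_s\tau_s$ (one factor each from $\theta_1,\theta_2$) or through $\alpha_s\beta_s$ (two factors from $\lambda$). Hence the surviving terms are parametrised by the choice of the $n-k$ ``diagonal'' indices filled by the $\theta$'s, giving $\binom{n}{k}$; the two $\theta$-expansions contribute $(n-k)!^2$; and the $2k$ factors of $\lambda$ distribute over the $k$ remaining indices in $(2k)!$ ways.

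The one step that needs real care — and essentially the only obstacle — is the sign. Because $\omega_s,\tau_s,\alpha_s,\beta_s$ are all two-forms they commute, so reordering factors costs nothing and the multiplicities above are clean factorials; the entire sign therefore reduces to the single identity $\alpha_s\wedge\beta_s=-\,\omega_s\wedge\tau_s$, checked by reordering $dz_{s1}\wedge d\bar z_{s2}\wedge dz_{s2}\wedge d\bar z_{s1}$ and using $(\sqrt{-1})^2=-1$. Each of the $k$ off-diagonal indices contributes one such sign, so the coefficient of $\Omega$ is $(-1)^k(2k)!(n-k)!^2\binom{n}{k}$, and integrating against $\int_{A\times A}\Omega=1$ yields the claimed formula.
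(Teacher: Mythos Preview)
Your proof is correct and follows essentially the same strategy as the paper's: both use the K\"unneth bidegrees from Remark~\ref{rem_kuenneth} for the vanishing part, and then expand directly in the coordinates (\ref{coord_classes1}) to compute the surviving intersection number. Your per-index bookkeeping (each $s$ contributes either $\omega_s\tau_s$ or $\alpha_s\beta_s$) and the reduction of the sign to the single identity $\alpha_s\wedge\beta_s=-\omega_s\wedge\tau_s$ organise the computation more transparently than the paper, which instead expands $\theta_1^{n-k}\theta_2^{n-k}$ first and then picks out the matching terms of $\lambda^{2k}$, but the two arguments are the same computation in different notation.
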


\begin{proof}
Il est clair que les m\^onomes de degr\'e $2n$ de la forme $\theta_1^{n-k}\theta_2^{n-k} \lambda^{k}$ sont pr\'ecis\'ement les mon\^omes
d\'efinissant une classe dans
$H^n(A, \RR) \otimes H^n(A, \RR)$  par rapport \`a la d\'ecomposition de K\"unneth (cf. remarque \ref{rem_kuenneth}). Evidemment, c'est la seule composante non nulle,
ce qui montre la premi\`ere partie de l'\'enonc\'e.
Pour la deuxi\`eme partie, on suppose que $\theta_1, \theta_2$ et $\lambda$ sont donn\'es comme dans (\ref{coord_classes1}).
Alors $\theta_1^{n-k}\theta_2^{n-k}$ correspond \`a la $(2(n-k),2(n-k))$-forme
\[\sum_{ 1 \le i_1 < \dots <i_{n-k}\le n \atop n+1 \le j_1 < \dots <j_{n-k}\le 2n }
  (n-k)!^2 z_{i_1} \wedge i \bar z_{i_1} \wedge \dots \wedge z_{i_{n-k}} \wedge i \bar z_{i_{n-k}} \wedge z_{j_1} \wedge i \bar z_{j_1} \wedge \dots \wedge z_{j_{n-k}} \wedge i \bar z_{j_{n-k}}.\]
Si l'on \'ecrit 
\[\lambda^{2k}=\sum_{I,J} a_{IJ} dz_{i_1}\wedge d\bar z_{j_1} \wedge \dots \wedge dz_{i_{2k}}\wedge d\bar z_{j_{2k}} \]
 avec $a_{IJ} \in \RR$ et $I,J$ des multi-indices de longeur $2k$ dans $\{1, \dots, n\}$, 
on v\'erifie facilement que l'on a $a_{IJ} \not =0$ et $a_{IJ} dz_{i_1}\wedge d\bar z_{j_1} \wedge \dots \wedge dz_{i_{2k}}\wedge d\bar z_{j_{2k}}\wedge \theta_1^{n-k}\theta_2^{n-k} \not =0$ si et seulement
si $a_{IJ} dz_{i_1}\wedge d\bar z_{j_1} \wedge \dots \wedge dz_{i_{2k}}\wedge d\bar z_{j_{2k}}$ est de la forme
\begin{align*}
(2k)!z_{i_1} & \wedge i\bar z_{i_{1+n}}  \wedge z_{i_{1+n}} \wedge i \bar z_{i_1} \wedge  \dots  \wedge z_{i_k} \wedge i\bar z_{i_{k+n}} \wedge z_{i_{k+n}} \wedge i \bar z_{i_k} \\
& = (-1)^{3k} (2k)!z_{i_1} \wedge i\bar z_{i_1} \wedge z_{i_{1+n}} \wedge i \bar z_{i_{1+n}} \wedge  \dots  \wedge z_{i_k} \wedge i\bar z_{i_k} \wedge z_{i_{k+n}} \wedge i \bar z_{i_{k+n}}.
\end{align*}
Or il y a $\binom{n}{k}$ telles formes $a_{IJ} dz_{i_1}\wedge d\bar z_{j_1} \wedge \dots \wedge dz_{i_{2k}}\wedge d\bar z_{j_{2k}}$, et pour chaque telle forme on a 
\[a_{IJ} dz_{i_1}\wedge d\bar z_{j_1} \wedge \dots \wedge dz_{i_k}\wedge d\bar z_{j_k} \wedge \theta_1^{n-k}\theta_2^{n-k} =(-1)^{k}(2k)!(n-k)!^2 ,\]
de sorte que l'on obtient 
\[\theta_1^{n-k}\theta_2^{n-k} \lambda^{2k}=(-1)^{k}(2k)!(n-k)!^2 \binom{n}{k}.\]
\end{proof}

\begin{rem}
Si $A$ est de dimension $n$, on a
\[\mu^n=\sum_{k=0}^n 4^{n-k} (-1)^k \theta_1^{n-k} \theta_2^{n-k} \lambda^{2k}=\sum_{k=0}^n 4^{n-k}(2k)!(n-k)!^2 \binom{n}{k}>0.\]
\end{rem}

\begin{ex}
 Soit $n=2$. Alors on a (cf. \cite[\S 4]{DELV})
\[ \theta_1^2 \theta_2^2=4, \qquad \theta_1 \theta_2 \lambda^2=-4, \qquad \lambda^4=24, \qquad \mu^2=96.\]
\end{ex}

\section{Classes positives}
\label{section_classes_positives}

Soit $A$ toujours une vari\'et\'e ab\'elienne principalement polaris\'ee tr\`es g\'en\'erale de dimension $n$. 
Dans ce chapitre, on \'etudie la cha\^ine d'inclusions
\begin{equation}
\label{chain_cones}
\Sym^k \Psef^1 (A^e) \subset \Psef^k(A^e) \subset (\Strong^k(A^e) \subset )\Semi^k(A^e) \subset \Nef^k(A^e).
\end{equation}
On commence avec des r\'esultats sur le c\^one $\Sym^k \Psef^1(A^e)$ (section \ref{section_conesk})
et ensuite on regarde le c\^one des classes semipositives $\Semi^k(A^e)$ (section \ref{section_semipos}). 
Les r\'esultats de ces deux sections nous permettent alors de d\'eduire les th\'eor\`emes
\ref{thm_intro2} et \ref{thm_intro3} de 
l'introduction 
(th\'eor\`eme \ref{thm_inclusion_stricte_1} et th\'eor\`eme \ref{thm_inclusion_stricte_2}) dans les sections
\ref{section_classes_comp1} et \ref{section_classes_nefs}. 

\begin{rem}
Pour la vari\'et\'e ab\'elienne $A^e$, tous les c\^ones dans la cha\^ine (\ref{chain_cones}) sont 
invariants sous l'action de $\GL(W)$ sur $\N^\bullet(A^e)$ (cf. \S \ref{section_prel} et \cite[Prop. 1.6 et p. 12]{DELV}).
\end{rem}

\subsection{Le c\^one $\Sym^k \Psef^1(A^e)$ }
\label{section_conesk}

Dans cette section,
on montre que le c\^one $\Sym^k \Psef^1(A^e)$ engendr\'e par les intersections de $k$ diviseurs pseudoeffectifs 
est ferm\'e et l'on d\'etermine ses rayons extr\'emaux 
pour $1 \le k \le n$ et $e\ge 2$ (proposition \ref{prop_ext_rays}).

Regardons d'abord le cas $k=1$ (cf. \cite[\S 5.2]{BL} et \cite{PS}).
 Posons $\End_{\RR}(A^e)=\End(A^e) \otimes_{\ZZ} \RR$. Notons $\End^s_{\RR}(A^e)$ l'espace des \'el\'ements 
sym\'etriques par rapport \`a l'involution de Rosati de $\End_{\RR}(A^e)$ et $T_x$ la translation par $x$ dans $A^e$ pour un $x \in A^e$ fix\'e.
Ayant $\widehat{A^e} \simeq A^e$, on obtient un isomorphisme
\begin{eqnarray*}
 \N^1(A^e) & \simeq  &\End^s_{\RR}(A^e)\\
 L & \mapsto &  \phi_L
\end{eqnarray*}
de $\RR$-espaces vectoriels,
o\`u $\phi_L(x)=T_x^*L\otimes L^{-1}$. Si l'on identifie $\End_{\RR}(A^e)$ avec l'alg\`ebre de matrices $\Mat_e(\RR)$, l'involution de Rosati 
correspond \`a l'involution qui envoie une matrice sur sa transpos\'ee, de sorte que $\N^1(A^e)$ est isomorphe \`a l'espace des matrices $e \times e$ r\'eelles sym\'etriques $\Symm_e(\RR)$. 
Comme les valeurs propres de $\phi_L$ sont les m\^emes que ceux d'une matrice repr\'esentant la forme hermitienne $H_L$ associ\'ee \`a $L$ \cite[Lemme 2.4.5]{BL}, le c\^one 
 $\Nef^1(A^e)$ correspond au c\^one  $\Symm_e^+(\RR)$ des matrices r\'eelles sym\'etriques positives.
De plus, l'action de $\GL(W)$ sur $\N^1(A^e)$ correspond \`a l'action de $\GL_e(\RR)$ sur $\Symm_e(\RR)$  donn\'ee par \cite[Theorem 4.2]{PS}
\[\forall g \in \GL_e(\RR) \, \, \forall M \in \Symm_e(\RR) \qquad g \cdot M= g M \transpose g,\]  
de sorte que le c\^one $\Nef^1(A^e)$ est homog\`ene sous l'action de $\GL(W)$.

Ayant  $\phi_{\theta_i}(x_1, \dots, x_e)=(0, \dots, 0, x_i, 0, \dots, 0)$, on voit que
$\theta_i$ correspond \`a une matrice sym\'etrique de rang $1$ dans $\Symm_e^+(\RR)$, d'o\`u l'on d\'eduit que $\theta_i$ est 
extr\'emal dans $\Nef^1(A^e)$. Toutes les matrices sym\'etriques positives de rang $1$ \'etant congruentes
sous l'action de $\GL(W)$, on obtient comme  cas particulier de \cite[Thm 4.3]{PS}  l'\'enonc\'e suivant.

\begin{prop}
\label{corol_ext_1}
Soit $A$ une vari\'et\'e ab\'elienne principalement polaris\'ee tr\`es g\'en\'erale. Le c\^one $\Psef^1(A^e)=\Nef^1(A^e)$ est homog\`ene sous l'action
de $\GL(W)$ et l'ensemble de ses rayons extr\'emaux correspond \`a l'orbite $\GL(W) \cdot \theta_1$.
\end{prop}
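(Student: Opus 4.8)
Le plan est d'assembler les identifications \'etablies ci-dessus et de conclure comme cas particulier de \cite[Thm 4.3]{PS}. Rappelons d'abord que tous les c\^ones de la cha\^ine (\ref{inclusions1}) co\"incident en codimension $1$, de sorte que $\Psef^1(A^e)=\Nef^1(A^e)$; il suffit donc d'\'etudier le c\^one $\Nef^1(A^e)$. Par ce qui pr\'ec\`ede, on l'identifie au c\^one $\Symm_e^+(\RR)$ des matrices sym\'etriques r\'eelles positives, sur lequel $\GL(W)=\GL_e(\RR)$ agit par $g \cdot M=gM\transpose g$.

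Pour l'homog\'en\'eit\'e, je montrerais que $\GL(W)$ agit transitivement sur l'int\'erieur de $\Symm_e^+(\RR)$, c'est-\`a-dire sur le c\^one des matrices d\'efinies positives. En effet, toute matrice d\'efinie positive $M$ s'\'ecrit $M=g\transpose g$ pour un certain $g \in \GL_e(\RR)$ (par d\'ecomposition de Cholesky, ou par le th\'eor\`eme spectral), soit encore $M=g \cdot I_e$; l'orbite de la matrice identit\'e est donc l'int\'erieur tout entier, ce qui montre que le c\^one est homog\`ene.

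Pour la description des rayons extr\'emaux, le point clef est de v\'erifier que les rayons extr\'emaux de $\Symm_e^+(\RR)$ sont exactement ceux engendr\'es par les matrices de rang $1$, i.e. les matrices de la forme $v\transpose v$ avec $v \in \RR^e$ non nul. D'un c\^ot\'e, une matrice positive de rang $\ge 2$ se d\'ecompose, par le th\'eor\`eme spectral, en somme de deux matrices positives non nulles et lin\'eairement ind\'ependantes, et n'est donc pas extr\'emale. De l'autre c\^ot\'e, si $v\transpose v=P+Q$ avec $P,Q$ positives, tout $x$ dans le noyau de $v\transpose v$ v\'erifie $\transpose x Px=\transpose x Qx=0$, d'o\`u $Px=Qx=0$ par positivit\'e; les matrices $P$ et $Q$ sont ainsi port\'ees par la droite $\RR v$ et sont des multiples positifs de $v\transpose v$, ce qui prouve l'extr\'emalit\'e du rayon.

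Il reste \`a identifier l'ensemble de ces rayons \`a l'orbite $\GL(W)\cdot\theta_1$. Comme on l'a not\'e, $\theta_1$ correspond \`a la matrice de rang $1$ $e_1\transpose{e_1}$ dans la base $\{w_1, \dots, w_e\}$. Puisque $g\cdot(v\transpose v)=(gv)\transpose{(gv)}$ et que $\GL_e(\RR)$ agit transitivement sur les vecteurs non nuls de $\RR^e$, toute matrice positive de rang $1$ est dans l'orbite de $e_1\transpose{e_1}$; l'orbite $\GL(W)\cdot\theta_1$ co\"incide donc avec l'ensemble des rayons extr\'emaux de $\Nef^1(A^e)$. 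La seule difficult\'e conceptuelle est la caract\'erisation des rayons extr\'emaux comme matrices de rang $1$, tout le reste n'\'etant qu'un assemblage des identifications d\'ej\`a obtenues.
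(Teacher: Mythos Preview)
Your proof is correct and takes essentially the same approach as the paper: both identify $\Nef^1(A^e)$ with $\Symm_e^+(\RR)$ via the discussion preceding the proposition, then reduce to the well-known facts that this cone is homogeneous under congruence and that its extremal rays are the rank~$1$ matrices, which form a single $\GL_e(\RR)$-orbit containing $\theta_1$. The only difference is that the paper simply cites \cite[Thm~4.3]{PS} for these properties of $\Symm_e^+(\RR)$, whereas you supply self-contained elementary arguments (Cholesky for homogeneity, spectral decomposition and a kernel argument for the extremal rays).
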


Avant d'\'etudier le c\^one $\Sym^k \Psef^1(A^e)$ pour $k \ge 2$, on aura besoin des notations suivantes.

\begin{nota}
Si $E$ est un sous-ensemble de $\RR^d$, on note
$\conv(E)$ l'enveloppe convexe de  $E$ et $\cone(E)$ le c\^one convexe engendr\'e par $E$.
Pour un c\^one convexe $\C \subset \RR^d$, on note
$\extr(\C)$ la r\'eunion des rayons extr\'emaux de $\mathcal{C}$.
\end{nota}

\begin{rem}
\label{rem_SO}
Soit $\mathcal{B}:=\{w_1, \dots,  w_e\}$ une base  de $W$. Munissons $W$ du produit int\'erieur tel que la base $\mathcal{B}$ est orthonorm\'ee et notons 
 $\OO(W)$ le groupe orthogonal par rapport \`a ce produit int\'erieur. Dans les coordonn\'ees associ\'ees \`a
$\mathcal{B}$,  $\GL(W)$ s'identifie avec $\GL_e(\RR)$, et si l'on identifie $\Sym^2 W$ avec $\N^1(A^e)$ comme d\'ecrit dans la remarque \ref{rem_identifi}, l'action de $g \in \GL(W)$ sur $\theta_1$ ne d\'epend que de la premi\`ere colonne de la matrice repr\'esentant  $g$, de sorte
que $GL(W)\cdot \theta_1=\RR_+^*(\OO(W) \cdot \theta_1)$.
\end{rem}

\begin{nota}
Posons pour la suite
\[E_k=\{ g_1\theta_1 \cdots g_k\theta_1 \mid g_1, \dots, g_k \in \GL(W)\} \subset \N^k(A^e).\]
Pour $\alpha$ et $\beta \in \N^\bullet(A^e)$, on \'ecrit $\alpha \sim \beta$, si 
$\alpha$ et $\beta$ sont dans la m\^eme orbite sous l'action de $\GL(W)$.
\end{nota}

\begin{prop}
\label{prop_ext_rays}
Soit $A$ une vari\'et\'e ab\'elienne principalement polaris\'ee tr\`es g\'en\'erale de dimension $n$.
Le c\^one $\Sym^k \Psef^1(A^e)$ est ferm\'e et l'on a l'inclusion
\[\ext (\Sym^k \Psef^1(A^e)) \subset E_k \]
qui est une \'egalit\'e pour $k \in \{0, \dots, n\}$.
\end{prop}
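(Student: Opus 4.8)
The plan is to present $\Sym^k\Psef^1(A^e)$ as the conical hull of a compact set and then to pin down its extreme rays. Since $\Psef^1(A^e)=\Nef^1(A^e)$ is the cone of positive semidefinite symmetric matrices, whose rank-one elements form the single orbit $\GL(W)\cdot\theta_1$ (Proposition~\ref{corol_ext_1}), the spectral theorem writes every $\alpha\in\Psef^1(A^e)$ as a finite sum $\sum_j c_j\,g_j\theta_1$ with $c_j\ge 0$. Expanding a $k$-fold product of such classes by multilinearity then gives $\Sym^k\Psef^1(A^e)=\cone(E_k)$. Using Remark~\ref{rem_SO} to write each $g\theta_1$ as $r\,h\theta_1$ with $r>0$ and $h\in\OO(W)$, I obtain $E_k=\RR_{>0}\cdot K_k$ with $K_k=\{h_1\theta_1\cdots h_k\theta_1\mid h_i\in\OO(W)\}$; being the image of the compact group $\OO(W)^k$ under a polynomial map, $K_k$ is compact, so $\Sym^k\Psef^1(A^e)=\cone(K_k)$.

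To obtain closedness and the inclusion $\ext(\Sym^k\Psef^1(A^e))\subset E_k$ in the range $k\le n$, I note that Corollary~\ref{corol_injection} makes every product of $k$ nonzero classes of $\N^1(A^e)$ nonzero, so each element of $K_k$ is a nonzero strongly positive $(k,k)$-form. Fixing a K\"ahler class $\omega$ on $A^e$, the mass functional $\ell(\alpha)=\int_{A^e}\alpha\wedge\omega^{ne-k}$ is then strictly positive on the compact set $K_k$, hence bounded below by some $c>0$; in particular $0\notin\conv(K_k)$, which forces $\cone(K_k)=\Sym^k\Psef^1(A^e)$ to be closed and to have the compact base $\{\alpha/\ell(\alpha)\mid\alpha\in K_k\}$. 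Milman's refinement of the Krein--Milman theorem then places the extreme points of this base inside it, i.e. $\ext(\Sym^k\Psef^1(A^e))\subset\RR_{>0}K_k=E_k$. (The same mass functional, applied to the nonzero elements of $K_k$, yields closedness and $\ext\subset E_k$ for all $k$; the substantive point is the reverse inclusion below.)

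The reverse inclusion $E_k\subset\ext(\Sym^k\Psef^1(A^e))$ for $k\le n$ is the heart of the matter, and I would reduce it to an intrinsic statement about symmetric tensors. By Theorem~\ref{thm_Thompson} and Corollary~\ref{corol_sym2} the multiplication map $\Sym^k(\Sym^2 W)\to\N^k(A^e)$ is an isomorphism for $k\le n$, and under the identification of Remark~\ref{rem_identifi} the class $g\theta_1$ corresponds to the square $\ell^2$ of $\ell=gw_1\in W$; thus $E_k$ becomes $\{\ell_1^2\cdots\ell_k^2\}$ and everything takes place in the cone generated by products of $k$ squares inside $\Sym^k(\Sym^2 W)$. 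Fix $x_0=\ell_1^2\cdots\ell_k^2$ and suppose $x_0=\sum_c w_c$ with $w_c=m_{c,1}^2\cdots m_{c,k}^2$ nonzero generators; the goal is that each $w_c$ be proportional to $x_0$. Pairing the identity against the rank-one functionals $(\phi^2)^k$, $\phi\in W^*$, for which $\langle m_1^2\cdots m_k^2,(\phi^2)^k\rangle$ is, up to a fixed scalar, $\prod_i\phi(m_i)^2$, converts it into the sum-of-squares identity $P(\phi)^2=\sum_c P_c(\phi)^2$ in $\RR[\phi]$, where $P(\phi)=\prod_i\phi(\ell_i)$ and $P_c(\phi)=\prod_i\phi(m_{c,i})$ are products of real linear forms. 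Since $\{P=0\}$ is a union of real hyperplanes, comparing orders of vanishing along each of them forces each $P_c$ to be divisible by $P$; as $\deg P_c=\deg P=k$, I get $P_c=\mathrm{const}\cdot P$, hence, by unique factorisation into linear forms, $w_c$ proportional to $x_0$. Thus $x_0$ is extreme and $E_k=\ext(\Sym^k\Psef^1(A^e))$.

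I expect the sum-of-squares step to be the main obstacle, both technically and conceptually, and it is exactly where the bound $k\le n$ is forced: the functionals $(\phi^2)^k$ descend to nonzero functionals on $\N^k(A^e)$ precisely when $k\le n$, since for $k\ge n+1$ one has $(\phi^2)^k\mapsto(g\theta_1)^k=0$, $g\theta_1$ being a pullback of the principal polarisation of an $n$-dimensional abelian variety. So the separating mechanism that proves extremality is available only in the stated range, which matches the fact that the equality $E_k=\ext$ is asserted only for $k\in\{0,\dots,n\}$.
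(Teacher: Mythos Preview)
Your argument is correct, and for the first two parts (closedness of the cone and the inclusion $\ext\subset E_k$) it is essentially the paper's own argument: both exploit that $E_k=\RR_{>0}\cdot K_k$ with $K_k$ the image of the compact group $\OO(W)^k$, and for $k\le n$ your use of a mass functional and Milman's theorem only makes explicit what the paper compresses into ``engendr\'e par un compact, donc ferm\'e''.

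Where you genuinely diverge is in the proof of the reverse inclusion $E_k\subset\ext$ for $k\le n$. The paper proceeds by induction on~$k$: using the $\GL(W)$--equivariance of the set of extreme rays it normalises a class in $E_k$ to the form $\theta_1\cdot\beta$ with $\beta\in E_{k-1}$; a direct coefficient inspection (for $e=2$, looking at the $\theta_2^k$--term, with the general~$e$ case left as ``similaire'') forces every summand in a decomposition $\theta_1\cdot\beta=\sum_j s_j$ to have a factor $\theta_1$ as well; one then cancels $\theta_1$ via the injectivity of Corollary~\ref{corol_injection} and concludes by induction. Your route is instead intrinsic: transporting everything to $\Sym^k(\Sym^2W)$ via the isomorphism of Theorem~\ref{thm_Thompson} (this is precisely where $k\le n$ enters), you push the identity $x_0=\sum_c w_c$ forward to $\Sym^{2k}W$ to obtain a sum-of-squares relation $P^2=\sum_c P_c^2$ among products of real linear forms, and a standard positivity/divisibility argument (vanishing of each $P_c$ on every real hyperplane $\{\ell_i=0\}$, iterated to handle multiplicities) yields $P_c=\lambda_c P$, whence unique factorisation in $\Sym^\bullet W$ gives $w_c\in\RR_{\ge0}x_0$. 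This has the pleasant feature of treating all $e\ge2$ uniformly without coordinate computations, at the price of invoking a slightly more conceptual toolkit; the paper's inductive argument is more elementary and keeps everything inside $\N^\bullet(A^e)$, but must be redone (or at least rephrased) for each~$e$. Both approaches pin the constraint $k\le n$ to the same source, namely the isomorphism $\Sym^k\N^1(A^e)\simeq\N^k(A^e)$ (equivalently, Corollary~\ref{corol_injection}).
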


\begin{proof}
Posons $\G=\GL(W)$.
Comme un c\^one ferm\'e qui ne contient pas de droite est l'enveloppe convexe de ses rayons extr\'emaux, toute classe $\alpha \in \Psef^1(A^e)$ 
s'\'ecrit comme combinaison convexe des classes dans l'orbite $G \cdot \theta_1$ par la proposition \ref{corol_ext_1}.
On a donc
\[\Sym^k \Psef^1(A^e)=\cone \{g_1\theta_1 \cdots g_k \theta_1 \mid g_1, \dots, g_k \in \G \}.\]
En particulier, $\Sym^k \Psef^1(A \times A)$ est le c\^one engendr\'e par l'image de l'application continue
(cf. remarque \ref{rem_SO})
\begin{eqnarray*}
 \varphi: \OO(W) \times \cdots \times \OO(W) & \to & \N^k(A^e) \\
       (g_1, \dots, g_k) & \mapsto & g_1 \theta_1 \cdots g_k\theta_1,
\end{eqnarray*}
i.e., $\Sym^k \Psef^1(A^e)$ est engendr\'e par un ensemble compact et est donc ferm\'e.
Or, l'enveloppe convexe de l'image de $\varphi$ est compacte de sorte que tout $x \in \conv(\im(\varphi))$ 
s'\'ecrit comme combinaison convexe d'un nombre fini d'\'el\'ements de $\im(\varphi)$.
Comme, par la remarque \ref{rem_SO},
\[\Sym^k \Psef^1(A^e)=\cone(\conv(\im(\varphi)))=\{\lambda x \mid \lambda \in \RR^+, x \in \conv(\im(\varphi))\}, \]
on a 
\[\ext (\Sym^k \Psef^1(A^e) ) \subset E_k.\]
Montrons maintenant 
\[E_k  \subset  \ext (\Sym^k \Psef^1(A^e) )\]
pour $1 \le k \le n$. On raisonne par r\'ecurrence sur $k$.
Pour \'eviter des indices, on le d\'emontre pour
$e=2$, le raisonnement pour le cas $e \ge 3$ \'etant similaire.

Supposons donc $e=2$.
Pour $k=1$, c'est la proposition \ref{corol_ext_1}. 
Soit maintenant $k \ge 2$ et soit $g_1 \theta_1 \cdots g_k \theta_1\in E_k$. Comme
   \[g_1\theta_1 \cdots g_k\theta_1 \sim \theta_1 \cdot g_1^{-1}g_2 \theta_1  \cdots g_1^{-1}g_k \theta_1=:\alpha,\]
et comme  $\ext(\Sym^k \Psef^1(A \times A))$ est 
invariant sous l'action de $G$, il suffit de montrer que
$\alpha$ est extr\'emal.
Ecrivons $\alpha=\sum_j s_j$ 
avec
\[s_j=  g_1^j \theta_1 \cdots g_k^j \theta_1\]
et  
\[g_i^j \theta_1=(a_i^j)^2 \theta_1 +(b_i^j)^2 \theta_2 +a_i^j b_i^j \lambda.\]
Supposons qu'il existe un $s_j$ tel que 
$b_i^j \not = 0$ pour tout $i \in \{1, \dots, k\}$. Cela implique que $s_j$ contient $\theta_2^k$ comme terme avec un coefficient strictement positif, ce qui n'est pas 
possible, car les coefficients de ce terme sont toujours $\ge 0$ et $\alpha$ ne contient pas de tel terme.
On peut donc supposer, pour tout $j$, 
$s_j=\theta_1 \cdot   g_2^j \theta_1 \cdots g_k^j \theta_1$, i.e.,
\[\theta_1 \cdot \left ( g_1^{-1}g_2 \theta_1 \cdots  g_1^{-1}g_k \theta_1 \right )
                  = \sum_j \theta_1 \cdot   (g_2^j \theta_1 \cdots g_k^j \theta_1).\]
Par le corollaire \ref{corol_injection},  on a, pour $1 \le k \le n$,
\[ g_1^{-1}g_2 \theta_1  \cdots  g_1^{-1}g_k \theta_1 = \sum_j g_2^j \theta_1 \cdots g_k^j \theta_1, \]
ce qui fournit le r\'esultat par r\'ecurrence sur $k$: tous les $g_2^j \theta_1 \cdots g_k^j \theta_1$ doivent \^etre
proportionnels \`a $g_1^{-1}g_2 \theta_1  \cdots  g_1^{-1}g_k \theta_1$, donc tous les $s_j$ sont proportionnels \`a $\alpha$.
\end{proof}

Finissons par la proposition suivante qui nous servira dans la section \ref{section_classes_nefs}.
\begin{prop}
\label{prop_im_inv}
 Soit $g \in \GL(W)$, soit $1 \le k \le n-1$ et soit
\begin{eqnarray*}
 \varphi_g: \N^k(A^e) & \to &  \N^{k+1}(A^e) \\
             \alpha & \mapsto & g\theta_1 \cdot \alpha.
\end{eqnarray*}
Alors $\varphi_g^{-1}(\Sym^{k+1}\Psef^1(A^e))=\Sym^k \Psef^1(A^e)$.
\end{prop}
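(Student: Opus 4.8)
Le plan est d'établir séparément les deux inclusions. Pour $\Sym^k\Psef^1(A^e)\subseteq\varphi_g^{-1}(\Sym^{k+1}\Psef^1(A^e))$, je remarque que $g\theta_1$ appartient à l'orbite $\GL(W)\cdot\theta_1$, donc à $\Psef^1(A^e)$ par la proposition \ref{corol_ext_1}; par conséquent, pour toute classe $\alpha\in\Sym^k\Psef^1(A^e)$, le produit $g\theta_1\cdot\alpha$ est une combinaison convexe de produits de $k+1$ diviseurs pseudoeffectifs, d'où $\varphi_g(\alpha)\in\Sym^{k+1}\Psef^1(A^e)$. Pour l'inclusion réciproque, j'utiliserais que $\GL(W)$ agit par automorphismes d'algèbre sur $\N^\bullet(A^e)$ et que les cônes $\Sym^j\Psef^1(A^e)$ sont $\GL(W)$-invariants: comme $g^{-1}\cdot(g\theta_1\cdot\alpha)=\theta_1\cdot g^{-1}\alpha$, on a $g\theta_1\cdot\alpha\in\Sym^{k+1}\Psef^1(A^e)$ si et seulement si $\theta_1\cdot g^{-1}\alpha\in\Sym^{k+1}\Psef^1(A^e)$, et $\alpha\in\Sym^k\Psef^1(A^e)$ si et seulement si $g^{-1}\alpha\in\Sym^k\Psef^1(A^e)$. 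Il suffit donc de traiter le cas $g=\id$, c'est-à-dire de montrer que $\theta_1\cdot\beta\in\Sym^{k+1}\Psef^1(A^e)$ entraîne $\beta\in\Sym^k\Psef^1(A^e)$.

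Soit donc $\beta\in\N^k(A^e)$ avec $\theta_1\cdot\beta\in\Sym^{k+1}\Psef^1(A^e)$. Comme $k+1\le n$, la proposition \ref{prop_ext_rays} montre que ce cône est engendré par $E_{k+1}$, et le théorème de Carathéodory fournit une écriture finie
\[\theta_1\cdot\beta=\sum_j c_j\,s_j,\qquad c_j>0,\quad s_j=g_0^j\theta_1\cdots g_k^j\theta_1\in E_{k+1}.\]
Toujours parce que $k+1\le n$, le théorème \ref{thm_Thompson} identifie $\Sym^j\N^1(A^e)$ à $\N^j(A^e)$ pour $j\le n$; les monômes en les $\theta_l$ et $\lambda_{lm}$ forment alors une base de $\N^{k+1}(A^e)$ et l'on peut relever l'égalité ci-dessus dans l'anneau de polynômes $\Sym^\bullet\N^1(A^e)$, sous la forme $\theta_1\cdot\tilde\beta=\sum_j c_j\,\tilde s_j$, où $\tilde s_j=\prod_i g_i^j\theta_1$ est un produit de $k+1$ formes de degré $1$. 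Je note enfin que, par la proposition \ref{prop_iso_sym_explicite}, $g\theta_1=\sum_l\rho(g)_{l1}^2\theta_l+\sum_{l<m}\rho(g)_{l1}\rho(g)_{m1}\lambda_{lm}$, de sorte que $g\theta_1\propto\theta_1$ si et seulement si la première colonne $v$ de $\rho(g)$ vérifie $v_l=0$ pour tout $l\ge2$.

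Le point clef est alors que chaque $\tilde s_j$ est divisible par $\theta_1$. Pour le voir, je considère les monômes $\mathfrak m=\theta_2^{a_2}\cdots\theta_e^{a_e}$ de degré $k+1$, qui ne font pas intervenir $\theta_1$. Le coefficient d'un tel $\mathfrak m$ dans $\theta_1\cdot\tilde\beta$ est nul, tandis que son coefficient dans $\tilde s_j$ est une somme de produits de carrés des coordonnées des vecteurs $v_i^j$ (première colonne de $\rho(g_i^j)$), donc positif ou nul; comme $c_j>0$, ce coefficient est nul dans chaque $\tilde s_j$, pour chaque $\mathfrak m$. En sommant sur tous ces monômes (ce qui revient à poser $\theta_l=1$ dans la partie de $\tilde s_j$ ne faisant intervenir que les $\theta_l$, $l\ge2$), on obtient $\prod_{i=0}^k\big(\sum_{l=2}^e (v_i^j)_l^2\big)=0$ pour tout $j$; un facteur au moins s'annule, c'est-à-dire qu'il existe $i$ avec $(v_i^j)_l=0$ pour tout $l\ge2$, soit $g_i^j\theta_1\propto\theta_1$. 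On peut donc factoriser $\theta_1$ dans $\tilde s_j=\theta_1\cdot\tilde t_j$, où $\tilde t_j$ est un multiple positif d'un produit de $k$ éléments de $\GL(W)\cdot\theta_1$. Ainsi $\theta_1\cdot\tilde\beta=\theta_1\cdot\sum_j c_j\tilde t_j$, et la classe $\gamma$ représentée par $\sum_j c_j\tilde t_j$ appartient à $\Sym^k\Psef^1(A^e)$. La multiplication par $\theta_1$ étant injective de $\N^k(A^e)$ dans $\N^{k+1}(A^e)$ (corollaire \ref{corol_injection}, valable car $k\le n-1$), on conclut $\beta=\gamma\in\Sym^k\Psef^1(A^e)$.

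La difficulté principale est cette étape de positivité: un argument naïf de factorialité ne suffirait pas, car la divisibilité par $\theta_1$ ne passe pas à travers une somme; c'est la positivité des coefficients des monômes purement en les $\theta_l$ qui force chaque générateur extrémal $s_j$ effectivement présent à être divisible par $\theta_1$. L'hypothèse $k\le n-1$ intervient ici de façon essentielle à deux reprises: pour identifier $\Sym^j\N^1(A^e)$ à $\N^j(A^e)$ jusqu'au degré $n$ (ce qui donne un sens aux coefficients des monômes et à la description des rayons extrémaux de la proposition \ref{prop_ext_rays}) et pour disposer de l'injectivité de la multiplication par $\theta_1$.
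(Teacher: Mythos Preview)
Your proof is correct and follows essentially the same route as the paper's. Both reduce to $g=\id$ by $\GL(W)$-invariance, write $\theta_1\cdot\beta$ as a finite positive combination of extremal generators $g_0^j\theta_1\cdots g_k^j\theta_1$, use a positivity argument on the coefficients of the ``pure'' monomials $\theta_2^{a_2}\cdots\theta_e^{a_e}$ to force each generator to be divisible by $\theta_1$, and then cancel $\theta_1$ via Corollary~\ref{corol_injection}. The paper simply invokes ``l'argument utilis\'e dans la preuve de la proposition~\ref{prop_ext_rays}'' (which was written out only for $e=2$, where the single monomial $\theta_2^{k+1}$ suffices), whereas you spell out the general-$e$ version explicitly by evaluating at $\theta_1=0$, $\lambda_{lm}=0$, $\theta_l=1$ for $l\ge 2$; this is exactly the intended ``similar reasoning'' alluded to in the paper.
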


\begin{proof}
Il faut montrer que $g\theta_1 \cdot \alpha \in \Sym^{k+1}\Psef^1(A^e)$ implique que
$\alpha$ est contenu dans $\Sym^k \Psef^1(A^e)$ (l'autre sens est clair). Par la stabilit\'e de $\Sym^k \Psef^1(A^e)$
sous l'action de $\GL(W)$, il suffit de le montrer pour $\varphi_{\id}$. Soit 
$\theta_1 \cdot \alpha \in \Sym^{k+1}\Psef^1(A^e)$. Par la proposition pr\'ec\'edente, on peut alors \'ecrire
\[\theta_1 \cdot \alpha=\sum_j g_1^j \theta_1 \cdots g_k^j \theta_1,\]
o\`u la somme \`a droite contient un nombre fini de termes.
Par l'argument utilis\'e dans la preuve de la proposition \ref{prop_ext_rays}, on obtient
\[\theta_1 \cdot \alpha=\theta_1 \cdot \sum_j g_2^j \theta_1 \cdots g_k^j \theta_1.\]
Par le corollaire \ref{corol_injection}, on a 
donc 
\[\alpha= \sum_j g_2^j \theta_1 \cdots g_k^j \theta_1 \in \Sym^k \Psef^1(A^e).\]
\end{proof}

\subsection{Le c\^one semipositif}
\label{section_semipos}

Soit $A=U/\Gamma$ toujours une vari\'et\'e ab\'elienne principalement polaris\'ee tr\`es g\'en\'erale de dimension $n$, 
o\`u $U$ est un espace vectoriel complexe et $\Gamma$
un r\'eseau dans $U$.  Soit 
$W$ un espace vectoriel r\'eel de dimension $e$ et \'ecrivons $V^* \simeq U^* \otimes_{\RR} W$. On note 
$V_{\RR}$ l'espace vectoriel r\'eel engendr\'e par les vecteurs $v_{sr}, 1 \le s \le n, 1 \le r \le e$ (cf. (\ref{eqn_base})). 
 
Dans ce chapitre, on \'etudie les c\^ones semipositifs $\Semi^k(A^e)$. Dans une premi\`ere \'etape, on remarque qu'au lieu de regarder
la positivit\'e des formes hermitiennes $H_\alpha$ sur $\bigwedge^k V$ associ\'ees aux classes $\alpha \in \N^k(A^e)$, on peut \'etudier la positivit\'e d'une forme
bilin\'eaire sym\'etrique $B_\alpha$ sur $\bigwedge^k V_{\RR}$.
 Pour $k$ fix\'e, on montre qu'il existe une base telle que les matrices repr\'esentant 
les formes sym\'etriques $B_\alpha$ se d\'ecomposent en blocs correspondant aux facteurs irr\'eductibles de $\bigwedge^k V_{\RR}$ en tant que  $\GL(W)$-module  (proposition \ref{prop_bilinear_decomp}). La d\'emonstration de la proposition \ref{prop_bilinear_decomp} 
fournira  aussi une m\'ethode pour calculer les matrices  $b_\alpha$ repr\'esentant les formes sym\'etriques $B_\alpha$
\`a partir des repr\'esentations irr\'eductibles de $\GL(W)$ (remarque \ref{rem_algorithme}), ce qui nous permet de
d\'eduire que le c\^one $\Semi^k(A^e)$ 
ne d\'epend pas de la dimension de $A$ pour $n \ge k$ (corollaire \ref{corol_ind_dim}).
Enfin, on montre que, pour $1 \le k \le n$, on a des isomorphismes 
\begin{align*}
\N^k(A \times A) & \simeq \Symm_{k+1}(\RR), \\
\N^{2n-k}(A \times A) & \simeq \Symm_{k+1}(\RR),
\end{align*}
qui associent \`a une classe semipositive une matrice sym\'etrique positive (proposition \ref{prop_decomp}).

On reprend les notations de la section  \ref{section_genrel}. La base $\{w_1, \dots, w_e\}$ de $W$  est dite la 
\emph{base standard}
de $W$ et
\begin{equation*}
\{v_{s_1r_1} \wedge \dots \wedge v_{s_kr_k} \mid 1 \le s_i \le n, 1 \le r_i \le e, i \in \{1, \dots, k\} \}
\end{equation*}
est dite la \emph{base standard} de $\bigwedge^k V$ (cf. (\ref{eqn_base})). Le produit int\'erieur sur $\bigwedge^k V$ telle que
la base standard est une base orthonorm\'ee est dit \emph{produit int\'erieur standard}.

\subsubsection{D\'ecomposition des formes hermitiennes}

Afin d'associer \`a une classe dans $\N^k(A \times A)$ une forme bilin\'eaire sym\'etrique sur un espace vectoriel r\'eel,
et pour  \'etudier l'ensemble des formes ainsi obtenues, on a d'abord besoin d'un lemme technique.

Soit $\rho_k: \End(W) \to \End(\bigwedge^k V)$ la repr\'esentation induite par la repr\'esentation tautologique $W$ et notons
$\rho_k^{\mathcal B}(g)$ la matrice repr\'esentant $\rho_k(g)$ par rapport aux coordonn\'ees standards sur $W$ resp. sur $\bigwedge^k V$.  Rappelons que l'on a un isomorphisme 
\begin{align*}
\End(W)  & \to \End(A)\otimes \RR  \\
g & \mapsto  u_g
\end{align*}
et que, par construction, la repr\'esentation analytique de $u_g$ est donn\'ee par $\rho_1$.
Ayant de plus un isomorpisme $\phi: \N^1(A^e) \to \End^s(A^e) \otimes \RR$, on peut identifier tout
$\alpha \in \N^1(A^e)$ avec un  $g_\alpha \in \End(W)$.
Par  \cite[Lemme 2.4.5]{BL}, on a
\begin{equation}
\label{eqn_eq_matrices}
\rho_1^{\mathcal{B}}(g_\alpha)=h_\alpha^{\mathcal{B}},
\end{equation}
o\`u l'on note  $h_\alpha^{\mathcal{B}}$ la matrice repr\'esentant la forme hermitienne $H_\alpha$ associ\'ee \`a $\alpha \in \N^1(A^e)$
par rapport aux coordonn\'ees standards sur $W$ resp. sur $V$.

\begin{lemme}
\label{equality_maps}
Dans les coordonn\'ees standards de $W$, resp. de $\bigwedge^k V$, on a 
\begin{equation*}
\rho^{\mathcal{B}}_k(g_\alpha)=\frac{1}{k!}h^{\mathcal{B}}_{\alpha^k}
\end{equation*}
pour tout $\alpha \in \N^1(A^e)$.
\end{lemme}

\begin{proof}
Rappelons que le produit ext\'erieur d'une forme hermitienne sur $V$ est d\'efinie par
\[(\bigwedge^k H)(v_1 \wedge \dots \wedge v_k, w_1 \wedge \dots \wedge w_k)=\det( H(v_i, w_j)_{1 \le i,j \le k})\]
pour $v_1 \wedge \dots \wedge v_k$ et $ w_1 \wedge \dots \wedge w_k$ dans $\bigwedge^k V$.
Comme le produit d'intersection de deux classes dans $\N^\bullet(A^e)$ correspond au produit ext\'erieur des $(k,k)$-formes repr\'esentant les deux classes,
 on a $(\bigwedge^k H_\alpha)=\frac{1}{k!} H_{\alpha^k}$ et donc
\[\frac{1}{k!}h_{\alpha^k}^{\mathcal{B}}=\det(( h_\alpha^{\mathcal{B}})_{1 \le i,j \le k}).\]
En m\^eme temps, $\rho_1(g_\alpha)$ induit naturellement un endomorphisme $\bigwedge^k \rho_1(g_\alpha)$ sur $\bigwedge^k V$ en posant
$\bigwedge^k \rho_1(g_\alpha) (v_1 \wedge \dots \wedge v_k)=  \rho_1(g_\alpha) v_1 \wedge \dots \wedge \rho_1(g_\alpha) v_k$. Dans la base standard de
$\bigwedge^k V$, on a 
\[\bigwedge^k \rho_1^{\mathcal{B}}(g_\alpha)=\det( \rho_1(g^{\mathcal{B}}_\alpha)_{1 \le i,j \le k}).\]
Ayant $\rho_k^{\mathcal{B}}(g_\alpha)=\bigwedge^k \rho_1^{\mathcal{B}}(g_\alpha)$, on obtient le r\'esultat souhait\'e par (\ref{eqn_eq_matrices}).
\end{proof}

Remarquons que toutes les matrices $h_\alpha^{\mathcal{B}}$ sont r\'eelles, de sorte que  les matrices $h_{\alpha^k}^{\mathcal{B}}$
sont \'egalement r\'eelles. Comme les $\alpha^k$ engendrent $\N^k(A^e)$, on en d\'eduit que, pour tout $\beta \in \N^k(A^e)$, 
la matrice $h_\beta^{\mathcal{B}}$ est r\'eelle.  Cela nous permet d'identifier ces matrices avec une forme bilin\'eaire sym\'etrique sur $\bigwedge^k V_{\RR}$. On obtient ainsi une application
\begin{align*}
B: \N^k(A^e) & \to \Sym^2(\bigwedge^k V_{\RR}^*) \\
\alpha &\mapsto B_\alpha,
\end{align*}
qui associe \`a une classe semipositive une forme bilin\'eaire sym\'etrique semipositive sur $\bigwedge^k V_{\RR}$. 

La proposition suivante montre qu'il existe une base de $\bigwedge^k V_{\RR}$ telle que, pour tout $\alpha \in \N^k(A^e)$, 
la matrice $b_\alpha$ repr\'esentant la forme bilin\'eaire $B_\alpha$ se d\'ecompose en blocs correspondant aux $\GL(W)$-modules irr\'eductibles de $\bigwedge^k V_{\RR}$.

\begin{prop}
\label{prop_bilinear_decomp}
Il existe une d\'ecomposition
\[\bigwedge^k V_{\RR}=\bigoplus_l M_l\]
en facteurs irr\'eductibles  telle que  l'application
$B: \N^k(A^e)  \to \Sym^2(\bigwedge^k V_{\RR}^*)$ se factorise par $\bigoplus_l \Sym^2(M_l^*)$.
\end{prop}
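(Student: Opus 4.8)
The plan is to translate the statement into one about self-adjoint operators and to identify the relevant operator algebra with the enveloping algebra of the $\GL(W)$-action on $\bigwedge^k V_{\RR}$.

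First I would use the standard inner product to replace each symmetric form $B_\beta$ by the self-adjoint operator $T_\beta\in\End(\bigwedge^k V_{\RR})$ whose matrix in the standard basis is $h_\beta^{\mathcal{B}}$, so that $B_\beta(x,y)=\langle x,T_\beta y\rangle$. With respect to an \emph{orthogonal} decomposition $\bigwedge^k V_{\RR}=\bigoplus_l M_l$, the condition $B_\beta\in\bigoplus_l\Sym^2(M_l^*)$ amounts to $T_\beta$ preserving each $M_l$: indeed $B_\beta(x,y)=\langle x,T_\beta y\rangle=0$ whenever $x\in M_l$, $y\in M_{l'}$ with $l\neq l'$, provided $T_\beta M_{l'}\subseteq M_{l'}$ and $M_l\perp M_{l'}$. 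By Th\'eor\`eme \ref{Tankeev} the powers $\alpha^k$ ($\alpha\in\N^1(A^e)$) span $\N^k(A^e)$, and $B$ is linear, so $\im(B)$ is spanned by the $B_{\alpha^k}$; by Lemme \ref{equality_maps}, $h_{\alpha^k}^{\mathcal{B}}=k!\,\rho_k^{\mathcal{B}}(g_\alpha)$, hence $T_{\alpha^k}=k!\,\rho_k(g_\alpha)$. Since $\N^1(A^e)\simeq\Symm_e(\RR)$, the $g_\alpha$ range over all symmetric matrices.

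Next I would study the associative $\RR$-algebra $\mathcal{C}\subseteq\End(\bigwedge^k V_{\RR})$ generated by the identity and the $T_\beta$. Because every $T_\beta$ is self-adjoint, $\mathcal{C}$ is closed under adjoints, hence is a finite-dimensional $*$-algebra and therefore semisimple; moreover the orthogonal complement of any $\mathcal{C}$-submodule is again a $\mathcal{C}$-submodule. The key claim is that $\mathcal{C}$ equals the enveloping algebra $\rho_k(\RR[\GL(W)])$. The inclusion $\subseteq$ is immediate. For the reverse, I would use that $\rho_k$ is a group homomorphism (as $\rho_k(g)=\bigwedge^k\rho_1(g)$) and that the invertible symmetric matrices generate $\GL_e(\RR)$ as a group: every positive-definite matrix is symmetric, and, by the Cartan--Dieudonn\'e theorem, every orthogonal matrix is a product of symmetric reflections, so polar decomposition writes every $g\in\GL_e(\RR)$ as such a product. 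Multiplying the corresponding invertible operators $\rho_k(g_\alpha)\in\mathcal{C}$ then yields $\rho_k(g)\in\mathcal{C}$ for all $g\in\GL(W)$, whence $\rho_k(\RR[\GL(W)])\subseteq\mathcal{C}$. Consequently $\mathcal{C}$-submodules coincide with $\GL(W)$-submodules, and $\mathcal{C}$-irreducibles with $\GL(W)$-irreducibles.

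With these facts in hand the conclusion is formal: decompose $\bigwedge^k V_{\RR}=\bigoplus_l M_l$ orthogonally into $\mathcal{C}$-irreducible, i.e. $\GL(W)$-irreducible, submodules (possible since $\mathcal{C}$ is a semisimple $*$-algebra acting on an inner product space). For $l\neq l'$ and any $\beta$, one has $T_\beta M_{l'}\subseteq M_{l'}$ because $T_\beta\in\mathcal{C}$, and $M_l\perp M_{l'}$, so $B_\beta$ has no cross terms and lies in $\bigoplus_l\Sym^2(M_l^*)$; thus $B$ factors through $\bigoplus_l\Sym^2(M_l^*)$. I expect the main obstacle to be the claim that $\mathcal{C}$ is the \emph{full} enveloping algebra rather than a proper subalgebra: were $\mathcal{C}$ smaller, its orthogonal irreducible pieces could refine the $\GL(W)$-irreducibles and the resulting decomposition would fail to be into $\GL(W)$-irreducible factors. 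The group-generation argument for symmetric matrices is exactly what resolves this, while self-adjointness, semisimplicity, and the orthogonal splitting are then routine. Note that the Hodge-theoretic input enters only through Th\'eor\`eme \ref{Tankeev}, i.e. through generation of $\N^\bullet(A^e)$ in degree one.
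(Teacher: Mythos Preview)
Your argument is correct, but it takes a longer route than the paper's. The paper's proof is essentially three lines: by the unitary trick one chooses an orthonormal basis of $\bigwedge^k V_{\RR}$ adapted to a decomposition $\bigoplus_l M_l$ into $\GL(W)$-irreducibles; the change-of-basis matrix $a$ is then orthogonal, so $\transpose a\,b_{\beta^k}^{\mathcal B}\,a=a^{-1}\rho_k^{\mathcal B}(g_\beta)\,a$, and since $\rho_k$ is a polynomial representation extending to all of $\End(W)$, each $\rho_k(g_\beta)$ (invertible or not) preserves every $M_l$, giving the block-diagonal form. Your detour through the operator algebra $\mathcal C$ and its identification with the full enveloping algebra $\rho_k(\RR[\GL(W)])$---via polar decomposition and Cartan--Dieudonn\'e to show that invertible symmetric matrices generate $\GL_e(\RR)$---is valid and yields a pleasant by-product, but it is stronger than what the statement requires: only the ``immediate'' inclusion $\mathcal C\subseteq\rho_k(\RR[\GL(W)])$ is needed, provided one decomposes into $\GL(W)$-irreducibles from the outset rather than into $\mathcal C$-irreducibles. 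The obstacle you anticipated (that a smaller $\mathcal C$ might refine the $\GL(W)$-decomposition) disappears once one adopts that viewpoint. In short, your approach gives an intrinsic description of the algebra generated by the $T_\beta$; the paper's approach buys brevity.
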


\begin{proof}
Comme la base standard de $\bigwedge^k V_{\RR}$ est orthogonale par rapport au produit int\'erieur standard,
il existe par l'astuce unitaire une transformation orthogonale 
$a: \bigwedge^k V_{\RR} \to \bigwedge^k V_{\RR}$ telle que
la nouvelle base est adapt\'ee \`a la d\'ecomposition $\bigwedge^k V_{\RR} =\bigoplus_l M_l$. Par la proposition \ref{equality_maps}, on a donc
\[\transpose a \frac{1}{k!} b^{\mathcal{B}}_{\beta^k} a= \transpose a  \rho^{\mathcal{B}}_k(g_\beta) a=a^{-1}  \rho^{\mathcal{B}}_k(g_\beta) a\]
pour tout $\beta \in \N^1(A^e)$.
Par construction, les matrices $a^{-1}  \rho^{\mathcal{B}}_k(g_\beta) a$ se d\'ecomposent en blocs correspondant aux facteurs
irr\'eductibles de $\bigwedge^k V_{\RR}$, et comme les $\beta^k$ engendrent $\N^k(A^e)$, on en d\'eduit le r\'esultat souhait\'e.
\end{proof}

\begin{rem}
\label{rem_algorithme}
 La d\'emonstration de la proposition \ref{prop_bilinear_decomp} montre comment on peut calculer les matrices
$b_\alpha$ repr\'esentant les formes $B_\alpha$ associ\'ees aux classes $\alpha$ dans $\N^k(A^e)$:
soit 
\begin{equation}
\label{ref_decomp}
 \bigwedge^k V_{\RR}=\bigoplus_l M_l
\end{equation}
 une d\'ecomposition en facteurs irr\'eductibles orthogonaux deux \`a deux par rapport au produit int\'erieur standard sur $\bigwedge^k V_{\RR}$.  Choisissons maintenant une base
 orthonorm\'ee de $\bigwedge^k V_{\RR}$ adopt\'ee \`a cette d\'ecomposition. Si l'on note $\rho_{M_l}$ la repr\'esentation $\End(W) \to\End(M_l)$, on a
$\rho_k(g)=\bigoplus_l \rho_{M_l}(g)$ et par la proposition \ref{equality_maps}, on a
$\rho_{M_l}(g_\beta)=\frac{1}{k!} b_{\beta^k}|_{M_l}$ pour tout $\beta \in \N^1(A^e)$ dans les coordonn\'ees choisies. Cela nous permet
de calculer les matrices $b_\alpha$ pour $\alpha \in \N^k(A^e)$  comme suit 
(on l'explique en d\'etail pour $e=2$, le cas 
g\'en\'eral \'etant similaire):
on calcule d'abord les repr\'esentations $\rho_{M_l}:\End(W) \to \End(M_l)$ dans une base orthonorm\'ee de $M_l$ 
par rapport au produit int\'erieur induit sur $M_l$ en tant que sous-module de $\bigwedge^k V_{\RR}$. Supposons
que l'on obtient 
\begin{eqnarray*}
 \rho_{M_l}: \Mat_2(\RR) & \to & \Mat_{\dim (M_l)}(\RR)  \\
        \begin{pmatrix} a & b \\ c & d \end{pmatrix}  & \mapsto &  (p_{ij}(a,b,c,d))_{ij},
\end{eqnarray*}    
o\`u les $p_{ij}$ sont des polyn\^omes homog\`enes de degr\'e $k$.
\'Ecrivons de plus
\[p_{ij}(a,b,b,d)=\sum_{|l|=k} t_{ij}^{(l)}  a^{l_1}d^{l_2}b^{l_3}\]
avec $l=(l_1, l_2, l_3)$ et $t_{ij}^{(l)} \in \RR$. On a d'autre part
\[(a\theta_1 + d\theta_2 +b\lambda)^k=\sum_{|l|=k} \binom{k}{l_1, l_2, l_3} a^{l_1}d^{l_2}b^{l_3} \theta_1^{l_1} \theta_2^{l_2} \lambda^{l_3 }.\]
Si $\alpha \in \N^k(A^e)$ est une classe repr\'esent\'ee par le polyn\^ome
\[\sum_{|l|=k} x_{l_1, l_2, l_3} \theta_1^{l_1}\theta_2^{l_2}\lambda^{l_3},\]
la matrice $b_\alpha|_{M_l}$ repr\'esentant $B_\alpha|_{M_l}$, est alors donn\'ee par
\begin{equation}
 \label{eqn_polynome_classe}
\frac{1}{k!} b_\alpha|_{M_l} = \left( \sum_{|l|=k} t_{ij}^{(l)}\binom{k}{l_1, l_2, l_3}^{-1} x_{l_1, l_2, l_3} \right )_{ij}.
\end{equation}

 Remarquons enfin que si $M_l$ et $M_l'$ sont
deux modules isomorphes dans la d\'ecomposition (\ref{ref_decomp}), il suffit de calculer $b_\alpha|_{M_l}$ pour \'etudier la
positivit\'e de $\alpha$. Cela d\'ecoule du fait que
$b_\alpha|_{M_l'}$ doit \^etre une matrice semblable \`a $b_\alpha|_{M_l}$, de sorte que les deux matrices ont les 
m\^emes valeurs propres. 
\end{rem}

\begin{corol}
\label{corol_ind_dim}
 Les c\^ones $\Semi^k(A \times A)$ ne d\'ependent pas de la dimension de $A$ pour $n \ge k$.
\end{corol}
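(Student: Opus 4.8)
The plan is to read off the cone $\Semi^k(A\times A)$ from the recipe of Remark \ref{rem_algorithme} and to isolate the part of the data that survives when $n$ varies. First I would record the $\GL(W)$-module structure of $V_\RR$. Writing $U_0:=\Span_\RR(u_1,\dots,u_n)\subset U^*$ for the real span of the chosen basis of $U^*$, we have $V_\RR=U_0\otimes_\RR W$, with $\dim W=e=2$, and $\GL(W)=\GL_2(\RR)$ acts only through the factor $W$ while acting trivially on the $n$-dimensional space $U_0$. By Proposition \ref{prop_bilinear_decomp} and the last paragraph of Remark \ref{rem_algorithme}, a class $\alpha$ is semipositive precisely when each block $b_\alpha|_{M_l}$ is positive semidefinite, and it is enough to test one block per isomorphism class of irreducible $\GL(W)$-summand of $\bigwedge^k V_\RR$. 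Moreover, for $n\ge k$ Theorem \ref{thm_Thompson} gives $\N^k(A\times A)=\Sym^k\N^1(A\times A)$, so the ambient space, together with its coordinates $x_{l_1,l_2,l_3}$ ($l_1+l_2+l_3=k$) in the monomial basis $\theta_1^{l_1}\theta_2^{l_2}\lambda^{l_3}$, is literally the same vector space for every $n\ge k$.

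Next I would decompose $\bigwedge^k V_\RR$ and see which isomorphism classes occur. By the (dual) Cauchy formula for an exterior power of a tensor product,
\[\bigwedge^k V_\RR=\bigwedge^k(U_0\otimes_\RR W)=\bigoplus_{\lambda\vdash k}\mathbb{S}_\lambda(U_0)\otimes\mathbb{S}_{\lambda'}(W),\]
where $\lambda'$ is the conjugate partition. The $\lambda$-summand is nonzero iff $\mathbb{S}_\lambda(U_0)\ne 0$ and $\mathbb{S}_{\lambda'}(W)\ne 0$, i.e. iff $\lambda$ has at most $n$ rows and at most $2$ columns, and as a $\GL(W)$-module it is a multiple of $\mathbb{S}_{\lambda'}(W)$, the multiplicity $\dim\mathbb{S}_\lambda(U_0)$ being the only quantity that depends on $n$. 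Hence the set of isomorphism classes of irreducible $\GL(W)$-summands is indexed by the partitions $\lambda\vdash k$ with $\lambda_1\le 2$ and at most $n$ rows; for $n\ge k$ the row condition is automatic, so this set is independent of $n$.

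Finally I would check that the block attached to a fixed isomorphism class is, as a function of $x$, independent of $n$ up to a conjugation preserving positivity. Fix $M=\mathbb{S}_{\lambda'}(W)$ and compare the blocks obtained for two values $n,n'\ge k$. In each case formula (\ref{eqn_polynome_classe}) yields $b_\alpha|_M$, linear in $x$, from the matrices of the representation of $\GL(W)$ on $M$ written in an orthonormal basis. Since $M$ is a fixed abstract $\GL(W)$-module, the two representations are intertwined by a single invertible $Q$, whence $b^{(n')}_{\beta^k}|_M=Q\,b^{(n)}_{\beta^k}|_M\,Q^{-1}$ for every $\beta\in\N^1(A\times A)$; as the powers $\beta^k$ span $\N^k(A\times A)=\Sym^k\N^1(A\times A)$ and $\alpha\mapsto b_\alpha|_M$ is linear, this extends to $b^{(n')}_\alpha|_M=Q\,b^{(n)}_\alpha|_M\,Q^{-1}$ for all $\alpha$, exactly as in the last paragraph of Remark \ref{rem_algorithme}. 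Similar symmetric matrices have the same eigenvalues, so the condition $b_\alpha|_M\succeq 0$ is identical for $n$ and $n'$. Combining the three steps, $\Semi^k(A\times A)$ is cut out in the fixed space $\Sym^k\N^1(A\times A)$ by the same finite list of positive-semidefiniteness conditions for every $n\ge k$, which is the assertion.

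The main obstacle is this last step: one must be certain that neither the chosen irreducible decomposition nor the induced inner product affects the positivity test, and that the blocks attached to the same abstract module for different $n$ are genuinely conjugate. This is precisely the content of the similarity argument in Remark \ref{rem_algorithme}, now applied across dimensions rather than between isomorphic summands for a single dimension; the fact that conjugate symmetric matrices share their spectrum is what makes the positivity condition, and hence the cone, insensitive to $n$ once $n\ge k$.
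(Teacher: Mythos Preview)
Your proof is correct and follows the same strategy as the paper: reduce, via Proposition~\ref{prop_bilinear_decomp} and Remark~\ref{rem_algorithme}, to showing that the set of isomorphism classes of irreducible $\GL(W)$-summands of $\bigwedge^k V_\RR$ is independent of $n$ once $n\ge k$. The paper checks this by writing $V_\RR=W^{\oplus n}$ and expanding $\bigwedge^k W^{\oplus n}$ directly, observing that only multiplicities change; you instead invoke the Cauchy decomposition of $\bigwedge^k(U_0\otimes W)$, which is a cleaner bookkeeping device but leads to the same conclusion. You are also more explicit than the paper about why the block attached to a given isomorphism class yields the same positivity condition for different $n$ (the similarity argument applied across dimensions rather than across summands for a single $n$), a point the paper's proof leaves implicit in its appeal to Remark~\ref{rem_algorithme}.
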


\begin{proof}
 Par la d\'emonstration de la proposition \ref{prop_bilinear_decomp} et la remarque \ref{rem_algorithme}, il suffit de montrer qu'un module irr\'eductible appara\^it dans
la d\'ecomposition de $\bigwedge^k W^{\oplus n}$ si et seulement s'il appara\^it dans la d\'ecomposition 
de $\bigwedge^k W^{\oplus k}$ pour $n \ge k$: notons $l$ un multi-indice $(l_1, \dots, l_k)$ et posons
$|l|=\sum_{i=1}^k l_i$. On a\[\bigwedge^k W^{\oplus k} = \bigoplus_{|l|=k} (\bigwedge^{l_1} W \otimes \dots \otimes \bigwedge^{l_k} W)^{\oplus m_l}\]
et
\[\bigwedge^k W^{\oplus n} = \bigoplus_{|l|=k} (\bigwedge^{l_1} W \otimes \dots \otimes \bigwedge^{l_k} W)^{\oplus \binom{n}{k} m_l},\]
o\`u $m_l$ est un entier positif, d'o\`u le r\'esultat.
\end{proof}

\subsubsection{Le cas $A \times A$}

Regardons maintenant le cas $e=2$. On montre d'abord que, pour $1 \le k \le n$, on a des isomorphismes 
$\N^k(A \times A) \to \Symm_{k+1}(\RR)$ (resp. $\N^{2n-k}(A \times A) \to \Symm_{k+1}(\RR)$) qui envoient des classes semipositives sur des matrices 
positives (proposition \ref{prop_decomp}). Expliquons l'id\'ee pour $n=k$: ayant $V_{\RR}= W^{ \oplus k}$, le $\GL(W)$-module
$\Sym^k(W)$ est un facteur irr\'eductible de $\bigwedge^k V_{\RR}$. On choisit alors une base telle que les matrices $b_\alpha$ se d\'ecomposent en blocs
correspondant aux facteurs irr\'eductibles de $\bigwedge^k V_{\RR}$. En prenant la projection sur le bloc correspondant \`a une copie de $\Sym^k W$, on obtient l'isomorphisme
souhait\'e.

Ensuite, on applique la m\'ethode d\'ecrite dans la remarque \ref{rem_algorithme} afin de calculer l'isomorphisme
$\N^k(A \times A) \to \Symm_{k+1}(\RR)$ explicitement (lemme \ref{lemme_mat_M}) et pour obtenir
 des in\'equations d\'efinissant les 
c\^ones semipositifs $\Semi^k(A\times A)$ pour $A$ de dimension $3$ et pour $2 \le k \le 4$.

Commen\c{c}ons par un lemme technique.

\begin{lemme}
\label{lemme_sym_mat}
Soit $\rho:\End(W) \to \End(\Sym^k W)$ la repr\'esentation standard et 
$\rho_{\mathcal{B}}:\Mat_2(\RR) \to \Mat_{k+1}(\RR)$ l'application induite pour des coordonn\'ees sur $W$ et $\Sym^k W$ fix\'ees. L'espace vectoriel engendr\'e
par $\rho_{\mathcal{B}}(\Symm_2(\RR))$ est de dimension $\frac{(k+1)(k+2)}{2}=\dim \Sym^2(\Sym^k W)$.
\end{lemme}

\begin{proof}
 On choisit une base $\{w_1, w_2\}$ de $W$ et la base 
$\{w_1^iw_2^{k-i} \mid 0 \le i \le k\}$ de $\Sym^k W$. Soit $D$ la matrice diagonale avec $D_{ii}=\binom{n}{i}$ pour $0 \le i \le k$.
Comme $D$ est inversible, il suffit de montrer que l'espace engendr\'e par les matrices  
$\rho_{\mathcal{B}}(\Symm_2(\RR))D$ est de dimension $\frac{(k+1)(k+2)}{2}$ . Pour cela, on proc\`ede comme suit. D'abord on identifie $\Symm_2(\RR)$ avec $\Sym^2 W$ par l'application
\[\begin{pmatrix} a & b \\ b & c \end{pmatrix}  \mapsto aw_1^2 + bw_1w_2+cw_2^2.\]
 Alors
 on a $( aw_1^2 + bw_1w_2+cw_2^2)^k \in \Sym^k(\Sym^2 W)$. Soit $\phi:\Sym^k(\Sym^2W)\simeq \Sym^2(\Sym^k W)$ l'isomorphisme donn\'e par la r\'eciprocit\'e d'Hermite (cf. par exemple \cite[Lemma 3]{Tambour}). Comme les $( aw_1^2 + bw_1w_2+cw_2^2)^k$ engendrent
$\Sym^k(\Sym^2 W)$, les $\phi(( aw_1^2 + bw_1w_2+cw_2^2)^k)$ engendrent $\Sym^2(\Sym^k W)$, de sorte
qu'il suffit de montrer que l'on a
\begin{equation}
\label{equality_phi}
\forall g=\begin{pmatrix} a & b \\ b & c \end{pmatrix} \in \Symm_2(\RR) \qquad
\phi(( aw_1^2 + bw_1w_2+cw_2^2)^k)=\rho_{\mathcal{B}}(g)D,
\end{equation} 
o\`u l'on identifie $\Sym^2(\Sym^k W)$ avec $\Symm_{k+1}(\RR)$ (voir ci-dessous pour les d\'etails).

Pour montrer l'\'egalit\'e (\ref{equality_phi}), on  rappelle d'abord que
 l'isomorphisme $\phi$ 
 est donn\'e par
\begin{align*}
\phi: \Sym^k(\Sym^2W) & \to \Sym^2(\Sym^k W) \hookrightarrow \Sym^k W \otimes \Sym^k W, \\
(w_1^2)^{r_1} (w_1 w_2)^{r_2}(w_2^2)^{r_3} & \mapsto (w_1 \otimes w_1)^{\ast r_1} \ast  (w_1 \otimes w_2 +w_2 \otimes w_1)^{\ast r_2} \ast (w_2 \otimes w_2)^{\ast r_3}, 
\end{align*}
o\`u $(u_1 \otimes u_2) \ast (v_1 \otimes v_2)=(u_1v_1 \otimes u_2v_2) \in \Sym^2 W \otimes \Sym^2 W$ pour
 $u_1, u_2, v_1, v_2 \in W$.
Un calcul montre alors que l'image de  $( aw_1^2 + bw_1w_2+cw_2^2)^k$  par $\phi$ est
\begin{equation}
\label{equation_phi2}
 \sum_{r_1+r_2+r_3=k} \binom{k}{r_1, r_2, r_3} a^{r_1}b^{r_2}c^{r_3}
\sum_{s=0}^{r_2}  \binom{r_2}{s}
 w_1^{r_1+r_2-s}w_2^{s+r_3} \cdot w_1^{r_1+s}w_2^{r_2-s+r_3}.
\end{equation}
 Calculons maintenant $\rho_{\mathcal{B}}(g)D$. Pour $0 \le j \le k$, on a
\begin{align*}
g \cdot w_1^jw_2^{k-j} & =  (aw_1+bw_2)^j(bw_1+dw_2)^{k-j} \\
	& =  \sum_{i=0}^k \left ( \sum_{l=0}^i \binom{j}{i-l}\binom{k-j}{l}a^{i-l}b^{j-i+2l}c^{k-j-l} \right ) w_1^i w_2^{k-i}, 
\end{align*}
et ayant $ \binom{j}{i-l}\binom{k-j}{l}= \binom{k}{i-l, j-i+2l, k-j-l} \binom{j-i+2l}{l}\binom{k}{j}^{-1}$, on obtient ainsi
\begin{equation}
\label{equation_diagonal}
(\rho_{\mathcal{B}}(g)D)_{ij}= \sum_{l=0}^i \binom{k}{i-l, j-i+2l, k-j-l} \binom{j-i+2l}{l}a^{i-l}b^{j-i+2l}d^{k-j-l}, 
\end{equation}
et l'on v\'erifie facilement que cette matrice est sym\'etrique. Cela nous permet de voir $\eta(g):=\rho_{\mathcal{B}}(g)D$ comme un \'el\'ement dans
$\Sym^2(\Sym^k W)$ via l'isomorphisme $\Sym^2(\Sym^k W) \simeq \Symm_{k+1}(\RR)$ qui envoie 
$\eta(g)$ sur $\sum_{0 \le i \le j \le k} \eta(g)_{ij} w_1^i w_2^{k-i} \cdot w_1^j w_2^{k-j}$ .
En posant $r_1=i-l, r_2=j-i+2l, r_3=k-j-l$ pour $i,j,l$ fix\'es, le terme associ\'e de la somme donnant $\eta(g)_{ij}$ s'\'ecrit 
\[ \binom{k}{r_1, r_2, r_2} \binom{r_2}{l}a^{r_1}b^{r_2}d^{r_3},\]
et l'on a 
\begin{align*}
i & = r_1+r_2-l,   & k-i & =l+r_3,  \\
j & =r_1+l, & k-j & =r_2-l+r_3,
\end{align*}
ce qui fournit le r\'esultat souhait\'e en comparant avec
(\ref{equation_phi2}).
\end{proof}

\'Etudions maintenant la d\'ecomposition de
$\bigwedge^k V_{\RR}$ en $\GL(W)$-modules irr\'eductibles. Comme on a $V_{\RR} \simeq W^{\oplus n}$ en tant que
$\GL(W)$-modules, on remplace $\bigwedge^k V_{\RR}$ par $\bigwedge^k W^{\oplus n}$ dans la suite. Or
\begin{align*}
\bigwedge^k W^{\oplus n}
 & =\bigoplus_{s=\max\{0, k-n\}}^{\lfloor \frac{k}{2} \rfloor}
														\left( \det(W)^{\otimes s} \otimes W^{\otimes k-2s}   \right )^{\binom{n}{k-2s} \binom{2s}{s}} \\
												& = \bigoplus_{s=\max\{0, k-n\}}^{\lfloor \frac{k}{2} \rfloor}
															\left ( \bigoplus_{i=0}^{\lfloor \frac{k-2s}{2} \rfloor} 
															\det(W)^{\otimes i +s} \otimes \Sym^{k-2s-2i} W   \right)^{\binom{n}{k-2s} \binom{2s}{s}}.
\end{align*}
Posons $M_{k,l}=\det(W)^{\otimes l}  \otimes \Sym^{k-2l} W  $ pour $0 \le l \le \lfloor \frac{k}{2} \rfloor$
et notons $m_{k,l}$ la multiplicit\'e de $M_{k,l}$ dans la
d\'ecomposition de $\bigwedge^k W^{\oplus n}$.
Soit
\[p_l: \bigoplus_l \Sym^2(M_{k,l}^*)^{\oplus m_{k,l}} \to \Sym^2(M_{k,l}^*)\]
une projection sur une des copies de $\Sym^2(M_{k,l}^*)$. 

La proposition suivante fournit alors les isomorphismes $\N^k(A \times A) \simeq \Symm_{k+1}(\RR)$
(resp. $\N^{2n-k}(A \times A) \to \Symm_{k+1}(\RR)$) qui envoient les classes semipositives sur des matrices
semipositives pour $1 \le k \le n$ (apr\`es un choix de coordonn\'ees sur $\Sym^k W^*$ (resp. sur $\Sym^k W^* \otimes \det(W^*)^{\otimes n-k}$)).

\begin{prop}
\label{prop_decomp}
Soit $A$ une vari\'et\'e ab\'elienne principalement polaris\'ee tr\`es g\'en\'erale de dimension $n$ et soit $0 \le k \le 2n$. Alors les $\GL(W)$-morphismes
\[p_l \circ B:\N^k(A \times A) \to \Sym^2(M_{k,l}^*)\]
sont surjectifs pour tout $l$. En particulier, pour $1 \le k \le n$,
les applications 
\[p_0 \circ B: \N^k(A \times A) \to \Sym^2(\Sym^k W^*)\]
resp.
\[p_{n-k} \circ B: \N^{2n-k}(A \times A) \to  \Sym^2(\Sym^k W^* \otimes \det(W^*)^{\otimes n-k})\]
sont des isomorphismes de $\GL(W)$-modules
qui envoient toute classe semipositive sur une forme bilin\'eaire sym\'etrique semipositive. 
\end{prop}

\begin{proof}
 Rappelons que l'on a un isomorphisme $\N^1(A^e) \simeq \Symm_2(\RR)$, et que l'on note $g_\beta$ la matrice sym\'etrique ainsi associ\'ee \`a $\beta \in \N^1(A^e)$. 
Soit $\rho_{M_{k,l}}: \End(W) \to \End(M_{k,l})$ la repr\'esentation standard et
 \[ \rho^{\mathcal{B}}_{ M_{k,l}}: \Mat_2(\RR) \to \Mat_{\dim (M_{k,l})}(\RR)\]
 l'application induit pour des coordonn\'ees
de $W$ et $M_{k,l}$ fix\'ees. Par la d\'emonstration de la proposition \ref{prop_bilinear_decomp} resp. par la remarque \ref{rem_algorithme}, on peut supposer que les coordonn\'ees sont choisies de fa\c{c}on que $p_l \circ B_{\beta^k}= \rho^{\mathcal{B}}_{ M_{k,l}}(g_\beta)$ pour tout $\beta \in \N^1(A \times A)$. Comme les $\beta^k$ engendrent $\N^k(A^e)$, l'application $p_l \circ h$ est donc surjective si et seulement si
 les  matrices
\[ \rho^{\mathcal{B}}_{ M_{k,l}}(g_\beta)=\det(g_\beta)^{2l} \cdot \rho^{\mathcal{B}}_{ M_{k,l}}(g_\beta)\]
engendrent un espace vectoriel de dimension  $\frac{(k-2l+1)(k-2l)}{2}$.
Or les applications $ \rho^{\mathcal{B}}_{ M_{k,l}}$
sont polynomiales et l'ensemble
$U:=\{\beta \mid \det(\beta) \not = 0\}$ est un ouvert dans $\Symm_2(\RR)$, de sorte que
\[\langle \rho^{\mathcal{B}}_{M_{k,l}}(U) \rangle=\langle  \rho^{\mathcal{B}}_{M_{k-2l,0}}(U) \rangle.\] 
Il suffit ainsi de montrer que $\rho^{\mathcal{B}}_{M_{k,0}}(U)$ engendre un espace vectoriel de dimension 
$\frac{(k+2)(k+1)}{2}$ pour tout $k \in \NN$, ce qui est la conclusion du lemme \ref{lemme_sym_mat}.

\end{proof}

Soit $\rho:\End(W) \to \End(\Sym^k W)$ la repr\'esentation standard et 
$\rho_{\mathcal{B}}:\Mat_2(\RR) \to \Mat_{k+1}(\RR)$ l'application induite pour des coordonn\'ees associ\'ees \`a une base $\{w_1, w_2\}$ de $W$ et la base $\{w_1^i w_2^{k-i} \mid 0 \le i \le k\}$ de $\Sym^k W$.  
Soit $D \in \Mat_{k+1}(\RR)$ la matrice diagonale telle que $D_{ii}=\binom{n}{i}$ pour $0 \le i \le k$. 
Par la d\'emonstration du lemme \ref{lemme_sym_mat}, on sait que les matrices $\rho_{\mathcal{B}}(g)D$ sont sym\'etriques, de sorte que l'on obtient un isomorphisme
$b':\N^k(A \times A) \to \Symm_{k+1}(\RR)$ induit par la condition
$\beta^k \mapsto \rho_{\mathcal{B}}(g_\beta)D$ pour tout $\beta \in \N^1(A \times A)$ (cf. lemme \ref{equality_maps}).

\begin{lemme}
\label{lemme_mat_M}
Soit $A$ une vari\'et\'e ab\'elienne principalement polaris\'ee tr\`es g\'en\'erale de dimension $n$. Soit $1 \le k \le n$ et soit
 $\alpha \in \N^k(A \times A)$.
Avec les notations ci-dessus, $b_\alpha'$ repr\'esente $p_0 \circ B_\alpha=B_\alpha|_{\Sym^k W}$ pour tout $\alpha \in \N^k(A \times A)$
et l'on a 
\[
b_{\theta_1 \cdot \alpha}'=\left( \begin{array}{c | c}
 	b_\alpha'   & 0  \\ \hline
 	0						& 0
	\end{array} \right)
\]
pour  $1 \le k \le n-1$.
\end{lemme}

\begin{proof}
On a un plongement
canonique
\begin{eqnarray*}
\varphi: \Sym^k W & \to & W^{\otimes k}  \\
v_1 \cdots v_k & \mapsto & \sum_{\sigma \in S_k} v_{\sigma(1)} \otimes \dots \otimes v_{\sigma(k)}.
\end{eqnarray*}
Comme chaque tenseur dans $\varphi(v_1^i v_2^{k-i})$ appara\^it
$i! (k-i)!$ fois, et comme on a $\binom{k}{i}$ tenseurs, les vecteurs
\[\frac{1}{i! (k-i)! \sqrt{\binom{k}{i}} } \varphi(v_1^i v_2^{k-i}) , \, \, 0 \le i \le k,\] 
forment une base orthonorm\'ee de $\Sym^k W$  comme sous-module de $\bigwedge^k V_{\RR}$ par rapport
au produit int\'erieur standard. 
Soit $a: \Sym^k W \to \Sym^k W$ le changement de base donn\'e par la matrice
\[ a_{ij}= \begin{cases} c_0 i!(k-i)! \sqrt{\binom{k}{i}} & \text{si } i=j, \\
                                     0   & \text{sinon, }			
           \end{cases}\]
o\`u $c_0 \in \RR$.
La matrice $\rho_{\mathcal{B}'}(g)$ repr\'esentant $\rho(g)$ dans la nouvelle base $\mathcal{B}'$ est alors donn\'ee par
$\rho_{\mathcal{B}'}(g)=a \rho_{\mathcal{B}}(g) a^{-1}$. La matrice $\rho_{\mathcal{B}'}(g)$ ne d\'epend pas 
de la constante $c_0$ de sorte que  $\rho_{\mathcal{B}'}(g)$ est la matrice repr\'esentant
 $\rho(g)$ dans une base orthonorm\'ee de $\Sym^k W$. En particulier, on a
\[a\rho_{\mathcal{B}}(g_\beta)a^{-1}=\rho_{\mathcal{B}'}(g_\beta)= b_{\beta^k}|_{\Sym^k W}\]
pour tout $\beta \in \N^1(A \times A)$ et donc
$\rho_{\mathcal{B}}(g_\beta) = a^{-1} b_{\beta^k}|_{\Sym^k W} a$. Si l'on prend $c_0=\frac{1}{k!}$,
on obtient ainsi
\[\rho_{\mathcal{B}}(g_\beta) D= \transpose (a^{-1}) b_{\beta^k}|_{\Sym^k W}  a^{-1},\]
ce qui fournit la premi\`ere partie de l'\'enonc\'e.
Posons
\[b_\alpha'= \transpose (a^{-1}) b_\alpha|_{\Sym^k W} a^{-1}\]
et notons $(b_\alpha')_{ij}$ le coefficient correspondant aux coordonn\'ees $i,j$ dans la matrice
$b_\alpha'$ pour $0 \le i, j \le k$.
On peut calculer ces matrices en calculant les $b_{\beta^k}'$ et en appliquant 
ensuite la m\'ethode d\'ecrite dans la remarque \ref{rem_algorithme}.
De (\ref{equation_diagonal}) on obtient ainsi
\[(b_{\theta_1 \cdot \alpha}')_{i j}=\sum_{0 \le l \le k-i \atop j-i \le l \le j}  \binom{i-j+2l}{l} x_{k-i-l, j-l, i-j+2l}(\theta_1 \cdot \alpha)\]
pour $0 \le i,j \le k$ et
\[(b_\alpha')_{i j}=\sum_{0 \le l \le k-i \atop j-i \le l \le j } \binom{i-j+2l}{l} x_{k-1-i-l, j-l, i-j+2l}(\alpha)\]
pour $0 \le i,j \le k-1$.
Or, on a 
\[ x_{k-l-i-1, j-l, i-j+2l}(\alpha)=x_{k-i-l, j-l, i-j+2l}(\theta_1 \cdot \alpha)\]
pour $0 \le l \le k-i-1$,
\[x_{0, j-k+i, 2k-i-j}(\theta_1 \cdot \alpha)=0\]
et
\[(b_{\theta_1 \cdot \alpha}')_{ik}=(b_{\theta_1 \cdot \alpha}')_{ki}=0,\]
ce qui fournit le r\'esultat souhait\'e.
\end{proof}

En suivant 
la proc\'edure d\'ecrite dans la remarque \ref{rem_algorithme}, on calcule maintenant
des repr\'esentations  des c\^ones $\Semi^k(A \times A)$ pour $k=2,3,4$ pour une vari\'et\'e
ab\'elienne $A$ principalement polaris\'ee tr\`es g\'en\'erale de dimension $3$. On l'explique en 
d\'etail pour $k=2$.

\begin{enumerate}

\item 
 {\bfseries Le cas $k=2$.} 
Remarquons d'abord que l'on a
\[\bigwedge^2 W^{\oplus 3}=(\Sym^2 W)^{\oplus 3} \oplus \det(W)^{\oplus 6}.\]
Fixons des coordonn\'ees sur $W$ telles que l'action
de $g=\begin{pmatrix} a & b \\ c& d \end{pmatrix}$ sur $\Sym^2 W$ et $\det(W)$ soit donn\'ee par
\[\rho_{\det(W)}(g)=ad-bc \hskip 5mm \rho_{\Sym^2 W}(g)=\begin{pmatrix} a^2 & ab & b^2 \\ 2ac & ad+bc& 2bd \\ c^2 &  cd & d^2 \end{pmatrix}.\]
On a 
\[(a\theta_1+d\theta_2+b\lambda)^2=a^2\theta_1+d^2\theta_2+b^2 \lambda^2 +2ad\theta_1\theta_2+2ab\theta_1\lambda+2db\theta_2\lambda\]
et $g_{a\theta_1+d\theta_2+b\lambda}=\begin{pmatrix} a & b \\ b& d \end{pmatrix}$. Par le lemme \ref{equality_maps}, la matrice repr\'esentant la forme bilin\'eaire sym\'etrique
$B_{(a\theta_1+d\theta_2+b\lambda)^2}$
est donc semblable \`a une  matrice diagonale par blocs compos\'ee de $6$ blocs de la forme $ad-b^2 $ 
et $3$ blocs de la forme 
\[\begin{pmatrix} a^2 & ab & b^2 \\ 2ab & ad+b^2& 2bd \\ b^2 &  bd & d^2 \end{pmatrix}.\]
Plus g\'en\'eralement, pour
\[\alpha=x_{2,0,0}  \theta_1^2 +x_{1,1,0}  \theta_1 \theta_2   + x_{0,2,0}  \theta_2^2     + x_{1,0,1}  \theta_1 \lambda    + x_{0,1,1}  \theta_2 \lambda    + x_{0,0,2}  \lambda^2 \in \N^2(A \times A),\]
 une matrice repr\'esentant $B_\alpha$ 
est semblable \`a une matrice diagonale par blocs compos\'ee de $6$ blocs de la forme
\begin{equation}
\label{k=2_first}
\frac{1}{2}x_{1,1,0} -x_{0,0,2}
\end{equation}
et $3$ blocs de la forme
\[\begin{pmatrix}
 x_{2,0,0} & \frac{1}{2}x_{1,0,1} & x_{0,0,2} \\
x_{1,0,1} & \frac{1}{2}x_{1,1,0}+x_{0,0,2} & x_{0,1,1} \\
 x_{0,0,2} & \frac{1}{2} x_{0,1,1} & x_{0,2,0} \\
\end{pmatrix},\]
ce qui permet de d\'eduire des in\'equations d\'efinissant $\Semi^2(A \times A)$.

Pour obtenir une matrice diagonale par blocs repr\'esentant $B_\alpha$, il suffit par le lemme \ref{lemme_mat_M} de remplacer
la matrice $\rho_{\Sym^2 W}(g)$ par la matrice $\rho_{\Sym^2 W}(g)D$ (o\`u $D$ est la matrice diagonale avec $D_{ii}=\binom{2}{i}$ pour $i=0, 1, 2$)
dans le raisonnement ci-dessus.
Une matrice $b_\alpha$ repr\'esentant $B_\alpha$ est alors donn\'ee par une matrice diagonale par blocs compos\'ee de $6$ blocs de la forme (\ref{k=2_first})
et $3$ blocs de la forme
\begin{equation}
\label{k=2_second}
b_\alpha|_{\Sym^2 W}=\begin{pmatrix}
 x_{2,0,0} & x_{1,0,1} & x_{0,0,2} \\
x_{1,0,1} & x_{1,1,0}+2x_{0,0,2} & x_{0,1,1} \\
 x_{0,0,2} & x_{0,1,1} & x_{0,2,0} \\
\end{pmatrix},
\end{equation}
et la classe $\alpha$ est semipositive si et seulement si les matrices (\ref{k=2_first}) et (\ref{k=2_second}) sont semipositives.
On retrouve ainsi la repr\'esentation du c\^one $\Semi^2(A \times A)$ d\'ej\`a obtenue dans
\cite{DELV} par un calcul explicite.

\item 
{\bfseries Le cas $k=3$.}  Ayant
\[\bigwedge^3 W^{\oplus 3} \simeq (\det(W) \otimes W)^{\oplus 8} \oplus \Sym^3 W,\]
la classe
\[\alpha=\sum_{|l|=3} x_l \theta_1^{l_1}\theta_2^{l_2} \lambda^{l_3} \in \N^3(A \times A)=\Sym^3\N^1(A \times A)\]
 est semipositive si et seulement si les matrices
\begin{equation}
\label{k=3_first}
b_\alpha|_{\det(W) \otimes W}= \begin{pmatrix}
2x_{2,1,0}-2x_{1,0,2} &  x_{1,1,1}-6x_{0,0,3}  \\
x_{1,1,1}-6x_{0,0,3} & 2x_{1,2,0} -2x_{0,1,2} \\
\end{pmatrix}
\end{equation}
et 
\begin{equation}
\label{k=3_second}
b_\alpha|_{\Sym^3 W}=\begin{pmatrix} 
x_{3,0,0}     & x_{2,0,1}                   &  x_{1,0,2}                 &x_{0,0,3}                   \\
x_{2,0,1}     &    x_{2,1,0}+2x_{1,0,2}           & x_{1,1,1}+3x_{0,0,3}           &   x_{0,1,2}        \\
x_{1,0,2}     & x_{1,1,1}+3x_{0,0,3}            &  x_{1,2,0}+2x_{0,1,2}            &        x_{0,2,1}   \\
 x_{0,0,3}  & x_{0,1,2}                   &   x_{0,2,1}                &  x_{0,3,0}       \\
\end{pmatrix}
\end{equation}
sont semipositives.

\item 
 {\bfseries Le cas $k=4$.} 

Comme les seuls facteurs irr\'eductibles de $\bigwedge^4 W^{\oplus 3}$ sont $\det(W)^{\otimes 2}$ et $\det(W) \otimes \Sym^2 W$, 
un polyn\^ome
\[P(\theta_1, \theta_2,\lambda)=\sum_{|l|=4} x_l \theta_1^{l_1}\theta_2^{l_2} \lambda^{l_3} \in \Sym^4\N^1(A \times A)\]
repr\'esente une classe
semipositive $\alpha \in \N^4(A \times A)$ si et seulement si les matrices
\begin{equation}
\label{k=4_first}
b_\alpha|_{\det(W)^{\otimes 2}}=x_{2,2,0}-x_{1,1,2}+6x_{0,0,4}
\end{equation}
et
\begin{equation}
\label{k=4_second}
b_\alpha|_{\det(W) \otimes \Sym^2 W}=
					\begin{pmatrix} 
3x_{3,1,0}-2x_{2,0,2}     & 2x_{2,1,1}-6x_{1,0,3}        &  x_{1,1,2}-12x_{0,0,4}                    \\
2x_{2,1,1}-6x_{1,0,3}     &    4x_{2,2,0}-24x_{0,0,4}           & 2x_{1,2,1}-6x_{0,1,3}             \\
x_{1,1,2}-12x_{0,0,4}       & 2x_{1,2,1}-6x_{0,1,3}           &  3x_{1,3,0}-2x_{0,2,2}          \\
\end{pmatrix}
\end{equation}
sont semipositives. 
\end{enumerate}

\begin{rem}
Pour $k=2$ (resp. pour $k=3$), on obtient ainsi aussi une repr\'esentation
du c\^one $\Semi^2(A \times A)$ (resp. de $\Semi^3(A \times A)$) pour une vari\'et\'e
ab\'elienne principalement polaris\'ee tr\`es g\'en\'erale $A$ de dimension $\ge 2$ (resp. pour
$A$ de dimension $\ge 3$) par le corollaire \ref{corol_ind_dim}.
\end{rem}

\section{Comparaison des c\^ones}

Pour la suite, on suppose toujours que $A$ est une vari\'et\'e ab\'elienne principalement polaris\'ee tr\`es g\'en\'erale de dimension $n$.

\subsection{Classes pseudoeffectives et classes semipositives}
\label{section_classes_comp1}

Dans cette section, on \'etudie  les inclusions de c\^ones
\begin{equation}
\label{inclusions_resume}
\Sym^k \Psef^1(A^e) \subset \Psef^k(A^e)\subset \Strong^k(A^e)  \subset \Semi^k(A^e)
\end{equation}
en se servant des caract\'erisations explicites des c\^ones
$\Sym^k\Psef^1(A^e)$ (en fonction de g\'en\'erateurs) et $\Semi^k(A^e)$ 
(en fonction d'in\'equations le d\'efinissant).
Le r\'esultat principal est, qu'en codimension $3 \le k \le n$ et pour $e \ge 2$, on a (th\'eor\`eme \ref{thm_inclusion_stricte_1})
\[\Sym^k \Psef^1(A^e) \varsubsetneq \Semi^k(A^e).\] 

Par ailleurs, on montre que les inclusions (\ref{inclusions_resume})
sont des \'egalit\'es pour $n=3$ et $k=4$ (proposition \ref{prop_semi4}).

\begin{lemme}
\label{lemme_pullback_semi}
Soit $1 \le l \le e$ et soit $\alpha \in \N^k(A^l)$. Si $p:A^e \to A^l$ est une projection,
\begin{enumerate}
 \item pour $1 \le k \le ln$, la classe $p^* \alpha$ est semipositive si et seulement si $\alpha$ est semipositive, 
  \item pour $1 \le k \le n$, la classe $p^* \alpha$ est dans le c\^one $\Sym^k \Psef^1(A^e)$ si et seulement si $\alpha \in \Sym^k \Psef^1(A^l)$.
\end{enumerate}
\end{lemme}

\begin{proof}
La projection $p:A^e \to A^l$ correspond \`a une projection $\tilde p:U^{\oplus e} \to U^{\oplus l}$ qui 
induit naturellement une application $\tilde p_k: \bigwedge^k U^{\oplus e} \to \bigwedge^k U^{\oplus l}$.
Avec ces notations, on a $H_{\tilde p_k^* \alpha}=\tilde p_k^*H_\alpha$, et comme 
$\tilde p_k$ est surjectif, $\tilde p_k^*H_\alpha$ est semipositive si et seulement
si $H_\alpha$ est semipositive, ce qui montre la premi\`ere partie de l'\'enonc\'e.

Montrons la deuxi\`eme partie du lemme. On peut supposer que $p$ est la projection sur les $l$ premiers
facteurs de $A^e$. Si $\alpha \in \Sym^k \Psef^1(A \times A)$, il est
clair que l'on a aussi $p^*\alpha \in \Sym^k \Psef^1(A^e)$. Supposons donc $p^*\alpha \in \Sym^k \Psef^1(A^e)$,
i.e., on peut \'ecrire
\[p^*\alpha= \sum_{j=1}^l g_1^j\theta_1 \cdots g_k^j \theta_1\]
avec $g_i^j \in \GL(W) , 1 \le i \le k, 1 \le j \le l$.
Ecrivons $g_i^j \theta_1 =\sum_{r=1}^e (a_r^{i,j})^2 \theta_r + \sum_{1 \le s < t \le e} a_s^{i,j} a_t^{i,j}\lambda_{st}$,
o\`u $a_r^{i,j} \in \RR$ pour $1 \le r \le e$.
Comme $p^*\alpha$ est un polyn\^ome en $\theta_r, \lambda_{st},1 \le r \le l , 1 \le s<t \le l$, on a
$a_r^{i,j}=0$ pour $r \ge l$ et tout $i,j$. Cela entra\^ine $\alpha \in \Sym^k \Psef^1(A^e)$ et
donc le r\'esultat souhait\'e.
\end{proof}

\begin{lemme}
\label{lemme_inclusions} Soit $B$ une vari\'et\'e ab\'elienne. Alors on a
 les inclusions suivantes:
\begin{enumerate}[(a)] \rm
 \item  $\Semi^k(B) \cdot \Semi^l(B) \subset \Semi^{k+l}(B)$,
   \label{item1}
 \item $\Strong^k(B) \cdot \Strong^l(B) \subset \Strong^{k+l}(B)$,
    \label{item2}
 \item $\Nef^k(B) \cdot \Psef^l(B) \subset \Nef^{k+l}(B)$.
  \label{item4}
\end{enumerate}
\end{lemme}

\begin{proof}
Cela d\'ecoule de la d\'efinition des notions de positivit\'e respectives.
\end{proof}

Rappelons quelques propri\'et\'es \'el\'ementaires du c\^one $\Symm_k^+(\RR)$ des matrices $k \times k$ r\'eelles sym\'etriques
positives  \cite[II.12]{Barvinok}.
\begin{prop} 
\label{prop_cone_semi}
 Soit $\Symm_k^+(\RR)$ le c\^one  des matrices $k \times k$ r\'eelles sym\'etriques positives.
Alors on a
\begin{enumerate}
\item $\extr(\Symm_k^+(\RR))    =   \{M \in \Symm_k^+(\RR) \mid \rg(M) =1 \}$.
\item Pour toute matrice $A$ de rang $r<k$, il existe une (unique) face $F$ de
$\Symm_k^+(\RR)$ telle que $A$ est dans l'int\'erieur relatif de $F$. De plus, il existe une isom\'etrie
$F \simeq \Symm_r^+(\RR)$ preservant le rang des matrices dans $F$.
\end{enumerate}
\end{prop}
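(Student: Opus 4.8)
L'idée est de déduire les deux assertions d'un unique lemme sur les décompositions de matrices semipositives, ce qui évite tout calcul explicite. Le lemme clef serait le suivant: si $M, M_1, M_2 \in \Symm_k^+(\RR)$ vérifient $M=M_1+M_2$, alors $\ker M \subseteq \ker M_1 \cap \ker M_2$, ou de façon équivalente $\im M_1, \im M_2 \subseteq \im M$. Pour l'établir, on prend $x \in \ker M$ et l'on écrit $0=\transpose x M x=\transpose x M_1 x+\transpose x M_2 x$; les deux termes de droite étant positifs, ils sont nuls, et en factorisant $M_i=\transpose B_i B_i$, l'égalité $\transpose x M_i x=\|B_i x\|^2=0$ donne $B_i x=0$, puis $M_i x=0$.

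Pour la première assertion, la décomposition spectrale $M=\sum_i \lambda_i v_i \transpose v_i$ (avec $\lambda_i>0$) montre d'une part que les matrices de rang $1$ engendrent $\Symm_k^+(\RR)$, et d'autre part que toute matrice de rang $\ge 2$ s'écrit comme somme de deux matrices semipositives non proportionnelles, donc n'est pas extrémale. Réciproquement, si $M=v\transpose v$ est de rang $1$ et $M=M_1+M_2$ avec $M_i$ semipositives, le lemme clef donne $\im M_i \subseteq \RR v$, d'où $M_i=\lambda_i v\transpose v$; ainsi $M$ engendre un rayon extrémal, ce qui fournit l'égalité $\extr(\Symm_k^+(\RR))=\{M \mid \rg M=1\}$.

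Pour la seconde assertion, je poserais $L=\im A$, sous-espace de dimension $r$, et considérerais $F=\{M \in \Symm_k^+(\RR) \mid \im M \subseteq L\}$. Le lemme clef montre aussitôt que $F$ est une face: si $M \in F$ et $M=M_1+M_2$ avec $M_i$ semipositives, alors $\im M_i \subseteq \im M \subseteq L$, donc $M_i \in F$. Comme $M$ symétrique vérifie $\ker M=(\im M)^\perp$, toute matrice de $F$ s'annule sur $L^\perp$; en choisissant une base orthonormée de $\RR^k$ adaptée à la décomposition $L \oplus L^\perp$, elle prend la forme diagonale par blocs $\left(\begin{smallmatrix} M' & 0 \\ 0 & 0 \end{smallmatrix}\right)$ avec $M' \in \Symm_r^+(\RR)$. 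L'application $M \mapsto M'$ réalise alors une isométrie $F \simeq \Symm_r^+(\RR)$ pour le produit scalaire de Frobenius (les blocs nuls ne contribuent pas à la trace), qui préserve évidemment le rang. Via cette isométrie, $\inter(F)$ correspond aux matrices définies positives de $\Symm_r^+(\RR)$, c'est-à-dire aux $M \in F$ avec $\im M=L$; en particulier $A \in \inter(F)$.

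L'unicité de $F$ résulterait de la propriété générale que tout point d'un cône convexe appartient à l'intérieur relatif d'une unique face \cite[II.12]{Barvinok}, appliquée au point $A \in \inter(F)$. La seule difficulté réelle n'est donc pas calculatoire: elle tient à dégager le lemme clef et à identifier correctement l'intérieur relatif $\inter(F)=\{M \succeq 0 \mid \im M=L\}$, tout le reste découlant formellement du théorème spectral et de la théorie élémentaire des cônes convexes.
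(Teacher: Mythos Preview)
Your proof is correct. Note, however, that the paper does not actually prove this proposition: it simply recalls these properties as well-known and cites \cite[II.12]{Barvinok}. Your argument is therefore not a different route but rather a self-contained proof where the paper gives none. The key lemma you isolate (that $M=M_1+M_2$ in $\Symm_k^+(\RR)$ forces $\im M_i \subseteq \im M$) is exactly the standard ingredient, and your identification of the face $F=\{M \succeq 0 \mid \im M \subseteq L\}$ together with the Frobenius isometry to $\Symm_r^+(\RR)$ is the usual description of the facial structure of the PSD cone. The only place you still lean on the reference is for the general fact that every point of a convex cone lies in the relative interior of a unique face; this is fine, and in fact matches the paper's own citation.
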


\begin{thm}
\label{thm_inclusion_stricte_1}
Soit $A$ une vari\'et\'e ab\'elienne principalement polaris\'ee tr\`es g\'en\'erale de dimension $n\ge 3$. Alors on a 
\begin{equation}
\label{strict_inclusion1}
\Sym^k \Psef^1(A^e) \varsubsetneq \Semi^k(A^e)
\end{equation}
pour $3 \le k \le n$ et pour $e \ge 2$. Sous les m\^emes restrictions sur $k$ et $n$, on a de plus
\begin{equation}
\label{inclusion_extrays}
\ext(\Sym^k \Psef^1(A\times A)) \subset \ext(\Semi^k(A\times A)).
\end{equation}
\end{thm}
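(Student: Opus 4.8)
Le plan est de traiter d'abord l'inclusion \eqref{inclusion_extrays} sur les rayons extrémaux, puis d'en déduire \eqref{strict_inclusion1} après réduction au cas $n=k=3$, $e=2$. Pour \eqref{inclusion_extrays}, j'exploiterais l'isomorphisme $\Phi:=p_0\circ B:\N^k(A\times A)\xrightarrow{\sim}\Sym^2(\Sym^k W^*)\cong\Symm_{k+1}(\RR)$ de la proposition \ref{prop_decomp} (valable pour $1\le k\le n$), qui envoie les classes semipositives sur des matrices semipositives; $\Phi$ étant injectif, $\Semi^k(A\times A)$ s'identifie à un sous-cône de $\Symm_{k+1}^+(\RR)$. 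L'idée clef est que tout générateur extrémal $\xi=g_1\theta_1\cdots g_k\theta_1\in E_k=\ext(\Sym^k\Psef^1(A\times A))$ (proposition \ref{prop_ext_rays}) est envoyé par $\Phi$ sur une matrice de rang $1$. En effet, écrivant $g_i\theta_1=a_i^2\theta_1+b_i^2\theta_2+a_ib_i\lambda$, la matrice symétrique associée est $v_i\transpose{v_i}$ avec $v_i=(a_i,b_i)$, de rang $1$; par le lemme \ref{equality_maps} on a $\Phi(\beta^k)=\tfrac1{k!}\rho_{\Sym^k W}(g_\beta)$, et $\rho_{\Sym^k W}(v\transpose{v})$ est de rang $1$, d'image $\langle v^{\cdot k}\rangle$. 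En polarisant $\beta\mapsto\rho_{\Sym^k W}(g_\beta)$ le long de la décomposition $\xi=\prod_i(g_i\theta_1)$ dans $\Sym^\bullet\N^1$, on obtient que $\Phi(\xi)$ est de rang $1$, d'image $\langle v_1\cdots v_k\rangle$. Comme $\xi\in\Semi^k$, la matrice $\Phi(\xi)$ est semipositive de rang $1$, donc extrémale dans $\Symm_{k+1}^+(\RR)$ (proposition \ref{prop_cone_semi}). Un rayon extrémal d'un cône qui appartient à un sous-cône y demeure extrémal: si $\xi=\eta_1+\eta_2$ avec $\eta_i\in\Semi^k$, alors $\Phi(\eta_i)=c_i\Phi(\xi)$ avec $c_i\ge0$, d'où $\eta_i=c_i\xi$ par injectivité de $\Phi$.

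Pour \eqref{strict_inclusion1}, je réduirais d'abord au cas $e=2$: si $\beta\in\Semi^k(A\times A)\setminus\Sym^k\Psef^1(A\times A)$ et $p:A^e\to A^2$ est une projection, alors $p^*\beta$ est semipositive et n'est pas dans $\Sym^k\Psef^1(A^e)$, par le lemme \ref{lemme_pullback_semi}. Je réduirais ensuite à $k=3$: partant de $\beta\in\Semi^3(A\times A)\setminus\Sym^3\Psef^1(A\times A)$, la classe $\theta_1^{k-3}\beta$ est semipositive par le lemme \ref{lemme_inclusions}, et par applications répétées de la proposition \ref{prop_im_inv} (licites car les codimensions mises en jeu restent entre $3$ et $k\le n$), son appartenance à $\Sym^k\Psef^1$ entraînerait $\beta\in\Sym^3\Psef^1$; elle n'y est donc pas. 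Enfin, le corollaire \ref{corol_ind_dim} et le théorème \ref{thm_Thompson} montrent que $\Semi^3(A\times A)$ et $\Sym^3\Psef^1(A\times A)$ sont indépendants de $n\ge3$, ce qui ramène au cas $n=3$.

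Le cœur de la preuve est donc la construction d'une classe de $\Semi^3(A\times A)\setminus\Sym^3\Psef^1(A\times A)$ pour $n=3$. Via $\Phi$, une classe $\alpha$ s'identifie à la matrice $b_\alpha|_{\Sym^3 W}$ de \eqref{k=3_second}, et $\alpha$ est semipositive si et seulement si celle-ci et la matrice $b_\alpha|_{\det(W)\otimes W}$ de \eqref{k=3_first} sont semipositives. Un calcul direct à partir de ces deux matrices montre que, lorsque $b_\alpha|_{\Sym^3 W}=w\transpose{w}$ est de rang $1$ (pour une forme cubique binaire $w$), le bloc $b_\alpha|_{\det(W)\otimes W}$ est proportionnel à $-\mathrm{Hess}(w)$; il est donc semipositif exactement lorsque $w$ a trois racines réelles, c'est-à-dire lorsque $w$ est produit de trois formes linéaires réelles, donc lorsque la classe est dans $E_3$. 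La stricte inclusion doit ainsi provenir de classes de rang $\ge2$, et je l'obtiendrais explicitement: posant $w_a=xy(x+y)$ (trois racines réelles) et $w_b=x^3+y^3$ (une seule racine réelle), on considère la classe $\alpha$ telle que $b_\alpha|_{\Sym^3 W}=9\,w_a\transpose{w_a}+2\,w_b\transpose{w_b}$. On vérifie alors que $b_\alpha|_{\det(W)\otimes W}=\bigl(\begin{smallmatrix}18&-9\\-9&18\end{smallmatrix}\bigr)\succ0$, si bien que $\alpha\in\Semi^3(A\times A)$. En revanche $\alpha\notin\Sym^3\Psef^1(A\times A)$: comme $sw_a+tw_b$ a trois racines réelles si et seulement si $|t|\le|s|/3$ (on a par exemple $3w_a+w_b=(x+y)^3$ au bord), les éléments de rang $1$ de $E_3$ situés dans le plan $\langle w_a,w_b\rangle$ sont, dans la base $\{w_a,w_b\}$, les matrices $\bigl(\begin{smallmatrix}1&\tau\\\tau&\tau^2\end{smallmatrix}\bigr)$ avec $|\tau|\le\tfrac13$, et leur cône est formé des matrices de moments de mesures positives sur $[-\tfrac13,\tfrac13]$; or $\bigl(\begin{smallmatrix}9&0\\0&2\end{smallmatrix}\bigr)$ n'en est pas une, son second moment $2$ excédant $\tfrac19\cdot9=1$. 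L'obstacle principal est précisément cette dernière étape: identifier la semipositivité à la réalité des racines (via la hessienne), puis séparer la classe du cône $\Sym^3\Psef^1$, autrement dit exhiber l'écart entre le cône spectraédrique $\Semi^3$ et le cône engendré par les cubiques à racines réelles.
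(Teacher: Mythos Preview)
Your treatment of \eqref{inclusion_extrays} is correct and is a pleasant variant of the paper's argument: you identify $\Phi(g_1\theta_1\cdots g_k\theta_1)$ with the polarization of $g\mapsto\rho_{\Sym^k W}(g)$ evaluated at the rank-one matrices $v_i\transpose{v_i}$, which indeed has image $\langle v_1\cdots v_k\rangle$; the paper instead uses induction on $k$ together with the block shape of $b'_{\theta_1\cdot\alpha}$ from lemme \ref{lemme_mat_M}. Both routes give rank~$1$ and hence extremality.

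For \eqref{strict_inclusion1} your reductions to $e=2$, $k=3$, $n=3$ are fine, and your Hessian observation (that $b_\alpha|_{\det(W)\otimes W}=\tfrac12\bigl(-\mathrm{Hess}(w)\bigr)$ when $b_\alpha|_{\Sym^3W}=w\transpose w$) is correct and elegant. However, the explicit witness fails. The cubic $sw_a+tw_b=(x+y)\bigl(tx^2+(s-t)xy+ty^2\bigr)$ has three real roots precisely when $(s-t)^2\ge 4t^2$, i.e.\ (for $s>0$) when $-s\le t\le s/3$, \emph{not} when $|t|\le s/3$; your own boundary example $3w_a+w_b=(x+y)^3$ already shows the asymmetry, and $w_a-w_b=-(x+y)(x-y)^2$ gives the other endpoint. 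With the correct range $\tau\in[-1,\tfrac13]$, one checks
\[
\tfrac32\,(w_a-w_b)\transpose(w_a-w_b)+\tfrac92\,(w_a+\tfrac13 w_b)\transpose(w_a+\tfrac13 w_b)+3\,w_a\transpose w_a
=9\,w_a\transpose w_a+2\,w_b\transpose w_b,
\]
so your class $\alpha$ \emph{is} in $\Sym^3\Psef^1(A\times A)$ and does not separate the cones. (In the moment picture: with mass $9$ and mean $0$ on $[-1,\tfrac13]$, the maximal second moment is $3$, not $1$.)

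The paper sidesteps this delicate extremal analysis entirely: it takes $\alpha=\theta_1^2+\theta_1\theta_2+\theta_2^2+\lambda^2\in\N^2(A\times A)$, which is \emph{not} semipositive (the $1\times1$ block \eqref{k=2_first} equals $-\tfrac12$), hence not in $\Sym^2\Psef^1=\Semi^2$; one then checks directly from \eqref{k=3_first}--\eqref{k=3_second} that $\theta_1\cdot\alpha\in\Semi^3$, while $\theta_1\cdot\alpha\notin\Sym^3\Psef^1$ follows from proposition \ref{prop_im_inv}. This is both simpler and avoids the classification of totally real cubics.
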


\begin{proof}
Montrons d'abord (\ref{strict_inclusion1}) pour $e=2$.
Soit 
\[\alpha =\theta_1^2 + \theta_1 \theta_2 + \theta_2^2 +\lambda^2.\] 
Par la repr\'esentation explicite du
c\^one $\Semi^2(A \times A)$ donn\'ee dans (\ref{k=2_first}) et (\ref{k=2_second}), cette classe n'est pas semipositive, et ayant $\Sym^2 \Psef^1(A \times A)=\Semi^2(A \times A)$ (cf. \cite[Thm. 4.1]{DELV}), cela veut dire
qu'elle n'est pas contenue dans $\Sym^2 \Psef^1(A \times A)$. Par la proposition \ref{prop_im_inv}, on a 
donc 
\[\theta_1^{n-2} \cdot \alpha \not \in \Sym^n \Psef^1(A \times A)\]
 alors que l'on a par la repr\'esentation
explicite du c\^one $\Semi^3(A \times A)$ donn\'ee dans (\ref{k=3_first}) et (\ref{k=3_second}), et par le lemme \ref{lemme_inclusions}, 
\[\theta_1^{n-2} \cdot \alpha  \in \Semi^n(A \times A),\]
ce qui montre (\ref{strict_inclusion1}) pour $e=2$. On en d\'eduit 
(\ref{strict_inclusion1}) pour le cas g\'en\'eral $e\ge 3$ en tirant en arri\`ere les c\^ones respectivements par
des projections et en appliquant le lemme \ref{lemme_pullback_semi}.

Afin de montrer (\ref{inclusion_extrays}), on rappelle d'abord que la r\'eunion des rayons extr\'emaux de $\Sym^k \Psef^1(A\times A)$
est donn\'ee par (proposition \ref{prop_ext_rays})
\[E_k=\{g_1 \theta_1 \cdots g_k \theta_1 \mid g_1 ,\dots, g_k \in \GL(W)\} \subset \N^k(A \times A).\]
Par l'isomorphisme d\'ecrit dans le lemme \ref{lemme_mat_M} et par la proposition \ref{prop_cone_semi}, il suffit de montrer que
la matrice $b_\alpha'$ (comme d\'efinie dans le lemme \ref{lemme_mat_M}) est de rang $1$ pour tout 
$\alpha \in  E_k$.
Pour $k=1$, c'est la proposition \ref{corol_ext_1} de sorte que l'on peut supposer
$k \ge 2$.   Comme $E_k$ est invariant sous l'action de $\GL(W)$,
on peut supposer 
$\alpha= \theta_1 \cdot g_2 \theta_1 \cdots g_k \theta_1$. 
 Par r\'ecurrence, on
a $\rg(b_{g_2 \theta_1 \cdots g_k \theta_1}')=1$ et par le lemme \ref{lemme_mat_M},
cela fournit $\rg(b_\alpha')=1$, d'o\`u le r\'esultat.
\end{proof}

Par \cite{DELV}, on sait que l'on a $\Sym^{2n-2}(A \times A)=\Strong^{2n-2}(A \times A)$. La proposition suivante compl\`ete ce r\'esultat pour
$n=3$ et elle fournit ainsi des in\'equations d\'efinissant $\Psef^4(A \times A)$ dans ce cas. 

\begin{prop}
\label{prop_semi4}
Soit $A$ une vari\'et\'e ab\'elienne principalement polaris\'ee tr\`es g\'en\'erale de dimension $3$. Alors on a
\[\Sym^4 \Psef^1=\Psef^4(A \times A)=\Strong^4(A\times A)=\Semi^4(A \times A).\]
\end{prop}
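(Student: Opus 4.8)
The plan is to collapse the whole chain onto its two ends and then transport the already–known codimension–$2$ equality to codimension $4=2n-2$. By the chain of inclusions \eqref{chain_cones} the four cones satisfy
\[\Sym^4\Psef^1(A\times A)\subseteq\Psef^4(A\times A)\subseteq\Strong^4(A\times A)\subseteq\Semi^4(A\times A),\]
so it suffices to prove the single reverse inclusion $\Semi^4(A\times A)\subseteq\Sym^4\Psef^1(A\times A)$; all four equalities then follow at once.

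First I would pass to symmetric matrices. By Proposition~\ref{prop_decomp} the map $\Phi_4:=p_1\circ B\colon\N^4(A\times A)\xrightarrow{\sim}\Symm_3(\RR)$ (projection onto the block $M_{4,1}=\det(W)\otimes\Sym^2 W$) is an isomorphism given by the $3\times3$ block \eqref{k=4_second}, while the remaining block \eqref{k=4_first} is the scalar projection onto the copy of $\det(W)^{\otimes 2}=M_{4,2}$. Eliminating $x_{2,2,0},x_{1,1,2},x_{0,0,4}$ between \eqref{k=4_first} and the entries $M_{22},M_{13}$ of \eqref{k=4_second} shows that \eqref{k=4_first} equals $\tfrac14(M_{22}-4M_{13})$, a linear form in $M=\Phi_4(\alpha)$. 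Since semipositivity of $\alpha$ means that every block is semipositive, this gives
\[\Phi_4(\Semi^4(A\times A))=\mathcal{C}:=\{M\in\Symm_3^+(\RR)\mid M_{22}-4M_{13}\ge 0\}.\]
Carrying out the same elimination for codimension $2$ (block \eqref{k=2_second} and scalar \eqref{k=2_first}) gives that \eqref{k=2_first} equals $\tfrac12(M_{22}-4M_{13})$ in the coordinates of $\Phi_2:=p_0\circ B\colon\N^2(A\times A)\xrightarrow{\sim}\Symm_3(\RR)$, i.e. the \emph{same} linear form up to a positive factor; hence $\Phi_2(\Semi^2(A\times A))=\mathcal{C}$ as well. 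Using $\Sym^2\Psef^1(A\times A)=\Semi^2(A\times A)$ \cite[Thm.~4.1]{DELV} together with the fact that $\Sym^2\Psef^1$ is the closed cone generated by the products $g\theta_1\cdot g'\theta_1$ (Proposition~\ref{prop_ext_rays}), this yields
\[\mathcal{C}=\Phi_2(\Sym^2\Psef^1(A\times A))=\overline{\cone}\{\Phi_2(g\theta_1\cdot g'\theta_1)\mid g,g'\in\GL(W)\}.\]

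The crux is to recover all of $\mathcal{C}$ inside $\Phi_4(\Sym^4\Psef^1(A\times A))$ using the \emph{square} generators $(g\theta_1)^2(g'\theta_1)^2$. Writing $g\theta_1\leftrightarrow u^2$ and $g'\theta_1\leftrightarrow u'^2$ with $u,u'\in W$ (Remark~\ref{rem_identifi}), so that $g_{g\theta_1},g_{g'\theta_1}$ are rank-one symmetric matrices, I would compute $\Phi_4((g\theta_1)^2(g'\theta_1)^2)$ by polarizing the quartic map $\beta\mapsto\Phi_4(\beta^4)$, which by Lemma~\ref{equality_maps} and Remark~\ref{rem_algorithme} is (up to a positive constant, suitably normalized) $\det(g_\beta)\,\rho_{\Sym^2 W}(g_\beta)$, and evaluating at $g_{g\theta_1}$ and $g_{g'\theta_1}$. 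Because the determinant factor vanishes on rank-one matrices, only the mixed-determinant terms survive; since the mixed determinant of two rank-one matrices is $\tfrac12(u\wedge u')^2$ (the squared $2\times2$ determinant) and the $\Sym^2 W$-part reproduces $\Phi_2(g\theta_1\cdot g'\theta_1)$ in the same Veronese coordinates, the outcome is
\[\Phi_4\bigl((g\theta_1)^2(g'\theta_1)^2\bigr)=c\,(u\wedge u')^2\,\Phi_2(g\theta_1\cdot g'\theta_1),\]
for a positive constant $c$. As $(u\wedge u')^2\ge 0$ and is positive whenever $u,u'$ are independent, this scaling does not change the generated cone, whence
\[\Phi_4(\Sym^4\Psef^1(A\times A))\supseteq\overline{\cone}\{(u\wedge u')^2\,\Phi_2(g\theta_1\cdot g'\theta_1)\}=\overline{\cone}\{\Phi_2(g\theta_1\cdot g'\theta_1)\}=\mathcal{C}.\]
Combined with $\Phi_4(\Sym^4\Psef^1)\subseteq\Phi_4(\Semi^4)=\mathcal{C}$ from the chain, we get $\Phi_4(\Sym^4\Psef^1)=\mathcal{C}=\Phi_4(\Semi^4)$, hence $\Sym^4\Psef^1(A\times A)=\Semi^4(A\times A)$ and the full string of equalities.

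The main obstacle is the polarization computation together with the bookkeeping that makes $\Phi_2$ and $\Phi_4$ land in the \emph{same} copy of $\Symm_3(\RR)$: concretely, verifying that the defining linear form of both $\Semi^2$ and $\Semi^4$ is, up to a positive scalar, $M_{22}-4M_{13}$, and that the square generators satisfy the displayed identity with the harmless factor $(u\wedge u')^2$ produced by the $\det(W)$-twist in $M_{4,1}=\det(W)\otimes\Sym^2 W$. Once these two computations are in place the argument is purely formal, and it is precisely the reason the proof specialises to $n=3$, where $4=2n-2$ and the relevant $\N^{2n-2}$-block is $\det(W)\otimes\Sym^2 W$.
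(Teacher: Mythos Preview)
Your argument is correct and the polarization identity you isolate as the crux does hold: with the explicit blocks \eqref{k=2_second} and \eqref{k=4_second} one checks that $b'_{\beta^4}|_{M_{4,1}}=12\det(g_\beta)\,b'_{\beta^2}|_{\Sym^2 W}$, and polarizing gives
\[
\Phi_4\bigl((g\theta_1)^2(g'\theta_1)^2\bigr)=4\,(u\wedge u')^2\,\Phi_2(g\theta_1\cdot g'\theta_1),
\]
exactly as you claim (the first two terms in the full polarization vanish because $g_{g\theta_1},g_{g'\theta_1}$ have rank $1$). Together with the observation that both scalar blocks \eqref{k=2_first} and \eqref{k=4_first} equal a positive multiple of $M_{22}-4M_{13}$, your transport from $\Semi^2$ to $\Semi^4$ goes through.

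This is a genuinely different route from the paper. The paper works directly in $\Semi^4$: it first argues, using the structure of faces of $\Symm_3^+(\RR)$ (Proposition~\ref{prop_cone_semi}), that the extremal rays of $\Semi^4(A\times A)\simeq\Symm_3^+(\RR)\cap L_{\ge 0}$ are exactly the semipositive rank-$1$ matrices, and then, by solving an explicit $2$-variable system and a discriminant analysis, shows that each such matrix is realised either by some $g(\theta_1^2\theta_2^2)$ (when the discriminant is positive) or by some $g(\theta_1^3\theta_2)$ (when it vanishes). Your approach instead leverages the known equality $\Sym^2\Psef^1=\Semi^2$ from \cite{DELV} and the single algebraic identity above, avoiding the case analysis entirely; it also makes transparent \emph{why} the square generators $(g\theta_1)^2(g'\theta_1)^2$ already suffice and why $\Semi^2$ and $\Semi^4$ land on the same cone $\mathcal{C}$. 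The paper's method, on the other hand, is self-contained (it does not invoke the codimension-$2$ result) and yields as a by-product the explicit description of $\extr(\Semi^4)$ as the union of the two $\GL(W)$-orbits of $\theta_1^2\theta_2^2$ and $\theta_1^3\theta_2$, which your argument does not directly produce.
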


\begin{proof}
Posons
\[L_{\ge 0}=\{\alpha \in \N^4(A \times A) \mid x_{2,2,0}-x_{1,1,2}+6x_{0,0,4} \ge 0\}\]
et
\[L=\{\alpha \in \N^4(A \times A) \mid x_{2,2,0}-x_{1,1,2}+6x_{0,0,4}= 0\}.\]
On montre $\Sym^4 \Psef^1(A \times A)=\Semi^4(A \times A)$.
Par la repr\'esentation explicite du c\^one $\Semi^4(A \times A)$ donn\'ee dans (\ref{k=4_first}) et (\ref{k=4_second}), 
ce c\^one est isomorphe \`a  l'intersection du c\^one
$\Symm_3^+(\RR)$ avec le demi-espace $L_{\ge 0}$.
Il en d\'ecoule qu'une classe  semipositive $\alpha$ dans $\N^4(A \times A)$ est extr\'emale dans 
 $\Semi^4(A \times A)$ si et seulement si  $\alpha$ est semipositive et la matrice $b_\alpha':=b_\alpha|_{\det(W) \otimes \Sym^2 W}$ est de rang $1$:
 si $b_\alpha'$ est de rang $1$, il est clair
que $\alpha$ est une classe extr\'emale par la proposition \ref{prop_cone_semi}. Inversement, supposons maintenant 
$\rg(b_{\alpha}') \not =1$. Si $\rg(b_\alpha')=2$, la matrice
$b_\alpha'$ appartient \`a l'int\'erieur relative d'une face $F$ de $\Symm_3^+(\RR)$ qui est isomorphe \`a $\Symm_2^+(\RR)$
(cf. proposition \ref{prop_cone_semi}).
On voit tout de suite que $b_\alpha'$ ne peut pas \^etre extr\'emale dans $F \cap L_{\ge 0}$ et donc pas dans 
$\Semi^4(A \times A)$. Si la matrice $b_\alpha'$ est de rang $3$, elle est dans l'int\'erieur du c\^one $\Symm_3^+(\RR)$. 
Il existe donc un voisinage $U$ dans l'int\'erieur de $\Symm_3^+(\RR)$ contenant $b_\alpha'$. Il s'ensuit que l'on peut \'ecrire 
$b_\alpha'=M_1+M_2$ avec $M_1, M_2 \in \Symm_3^+(\RR) \cap L_{\ge0} \cap U$ et $M_1 \not = M_2$, de sorte que
$b_\alpha'$ n'est pas extr\'emale. 

Il suffit ainsi de montrer que toute matrice de rang $1$ contenue dans $\Symm_3^+(\RR) \cap L_{\ge 0}$ repr\'esente une classe
dans $\Sym^4 \Psef^1(A\times A)$. On montre qu'une matrice de rang $1$ est repr\'esent\'ee
soit par une classe $g(\theta_1^2 \theta_2^2)$ soit par $g(\theta_1^3 \theta_2)$ pour un $g \in \GL(W)$. 
Remarquons qu'une matrice sym\'etrique de rang $1$ est enti\`erement d\'et\'ermin\'ee par sa premi\`ere colonne si
celle-ci est non nulle.

Montrons d'abord que toute matrice $b_\alpha'$ de rang $1$ dans $\Symm_3^+(\RR) \cap L_{>0}$ correspond \`a une
classe $g(\theta_1^2 \theta_2^2)$.  
Soit $g=\begin{pmatrix}   a & b \\ c & d \end{pmatrix}$.
Supposons d'abord $c=d=1$ et regardons
\[g(\theta_1^2 \theta_2^2)=(\theta_1 +a^2 \theta_2+a \lambda)^2 (\theta_1 +b^2 \theta_2+b \lambda)^2.\]
La matrice $b_\alpha'$ repr\'esente donc une telle classe $g(\theta_1^2 \theta_2^2)$ si et seulement si les \'equations suivantes,
que l'on obtient comme conditions sur la premi\`ere colonne, sont satisfaites:
\begin{align*}
3x_{3,1,0}-2x_{2,0,2}  & =   4(a-b)^2, \\
2x_{2,1,1}-6x_{1,0,3}  & =     4(a-b)^2(a+b), \\
x_{1,1,2}-12x_{0,0,4}  & =    4(a-b)^2ab.
\end{align*}
Comme $a \not=b$, on est ramen\'e aux \'equations
\begin{align*}
3x_{3,1,0}-2x_{2,0,2}  & =   1, \\
2x_{2,1,1}-6x_{1,0,3}  & =    a+b, \\
x_{1,1,2}-12x_{0,0,4}  & =   ab.
\end{align*}
Autrement dit, la matrice $b_\alpha'$ repr\'esente une classe $g(\theta_1^2 \theta_2^2)$
si et seulement si ce syst\`eme admet une solution r\'eelle.
C'est \'equivalent \`a dire que le polyn\^ome 
\begin{equation}
\label{polynome}
P(y)=(x_{1,1,2}-12x_{0,0,4}) -(2x_{2,1,1}-6x_{1,0,3})y+y^2
\end{equation}
admet deux racines r\'eelles distinctes (car $a \not = b$). Comme la matrice est de
rang $1$, on a 
\[(2x_{2,1,1}-6x_{1,0,3})^2=4x_{2,2,0}-24x_{0,0,4}\]
et le discriminant de $P$ vaut
\[\Delta(P)=x_{2,2,0}-x_{1,1,2}+6x_{0,0,4}.\]
Ainsi on voit que toute matrice de rang $1$ avec $3x_{3,1,0}-2x_{2,0,2} \not =0$, 
qui est dans $L_{>0}$, est aussi dans $\Sym^4 \Psef^1(A \times A)$.
De la m\^eme fa\c{c}on, on montre le r\'esultat pour les matrices $b_\alpha'$ de rang $1$ telles que $3x_{1,3,0}-2x_{0,2,2} \not = 0$. Si 
\[3x_{1,3,0}-2x_{0,2,2} = 3x_{3,1,0}-2x_{2,0,2} =0,\] 
et la matrice est de rang $1$, elle repr\'esente  un 
multiple de $\theta_1^2 \theta_2^2$.

Il reste donc \`a montrer que les matrices semipositives de rang $1$ dans $L$ correspondent aux
classes $g(\theta_1^3\theta_2)$. Or une matrice de rang $1$ correspond \`a une classe $g(\theta_1^3\theta_2)$
si et seulement si les \'equations suivantes, obtenues comme conditions sur la premi\`ere colonne, sont satisfaites:
\begin{align*}
3x_{3,0,0}-2x_{2,0,2}  & =   1, \\
2x_{2,1,1}-6x_{1,0,3}  & =   2a, \\
x_{1,1,2}-12x_{0,0,4}  & =   a^2,
\end{align*}
ce qui est le cas si et seulement si  le polyn\^ome 
$P(y)$ d\'efini dans (\ref{polynome})
admet une racine double r\'eelle $a$. Par un raisonnement comme
pour $g(\theta_1^2 \theta_2^2)$, on trouve que c'est le cas si et seulement si la classe est contenue dans $L$, ce qui
fournit le r\'esultat souhait\'e. 
\end{proof}

\begin{rem}
Comme tous les modules irr\'eductibles dans une d\'ecomposition de $\bigwedge^{2n-2} W^{\oplus n}$ sont isomorphes \`a
$\det(W)^{\otimes n-1}$ ou \`a $\det(W)^{\otimes n-2} \otimes \Sym^2 W$, le c\^one $\Semi^{2n-2}(A \times A)$
peut toujours \^etre identifi\'e avec le c\^one des matrices sym\'etriques r\'eelles semipositives $3 \times 3$ intersect\'e
avec un demi-espace (cf. proposition \ref{prop_bilinear_decomp}). Pour des raisons comme dans la d\'emonstration de la proposition \ref{prop_semi4},
on devrait  avoir 
\[\Sym^{2n-2} \Psef^1(A \times A)=\Psef^{2n-2}(A \times A)=\Strong^{2n-2}(A \times A)=\Semi^{2n-2}(A \times A)\]
pour tout $n \ge 3$, et les rayons extr\'emaux de ce c\^one devraient correspondre aux classes $\GL(W)\cdot(\theta_1^n \theta_2^{n-2})$ et $\GL(W)\cdot (\theta_1^{n-1} \theta_2^{n-1})$.
Mais pour l'instant, je ne vois pas de moyen pour montrer  le cas g\'en\'eral.
\end{rem}

Avec un raisonnement semblable \`a celui utilis\'e dans la d\'emonstration de la proposition \ref{prop_semi4},
on peut montrer que le c\^one $\Sym^3 \Psef^1(A \times A)$ 
s'identifie avec le c\^one engendr\'e par les matrices de rang $1$ dans la repr\'esentation du
c\^one $\Semi^3(A \times A)$ donn\'ee dans (\ref{k=3_first}) et (\ref{k=3_second}).

\begin{question}
Si l'on regarde $\Semi^k(A \times A)$ comme un sous-c\^one de $\Symm_{k+1}^+(\RR)$, est-ce que,
pour $1\le k \le n$, l'ensemble des rayons extr\'emaux de $\Sym^k \Psef^1(A \times A)$ (resp. de $\Sym^{2n-k} \Psef^1(A \times A)$) s'identifie 
 avec l'ensemble des matrices semipositives de rang $1$ dans $\Semi^k(A \times A)$?
\end{question}

\subsection{Classes num\'eriquement effectives et classes pseudoeffectives}
\label{section_classes_nefs}

Dans \cite{DELV}, les auteurs montrent que, pour une surface ab\'elienne $A$,
la classe $\mu=4\theta_1 \theta_2-\lambda^2$ est nef mais pas pseudoeffective
 de sorte que l'on a une inclusion stricte 
$\Psef^2(A \times A) \varsubsetneq \Nef^2(A \times A)$.
Le r\'esultat principal de cette section (proposition \ref{thm_inclusion_stricte_2}) est que l'on a, en toute dimension $n$,
\begin{equation*}
\Psef^k(A^e) \varsubsetneq \Nef^k(A^e)
\end{equation*}
pour tout entier positif $e \ge 2$  pour $2 \le k \le ne-2$.

Rappelons d'abord qu'en tant que $\GL(W)$-modules, on a
\begin{equation}
\label{young_decomp_2}
\Sym^k \N^1(A \times A)=\bigoplus_{0 \le 2i \le 2k} \mu^{i} \cdot \Sym^{2k-4i}W \oplus \RR_+\mu^{n-k}
\end{equation}
et $g \cdot \mu=\det(g)^2 \mu$ pour tout $g \in \GL(W)$.

\begin{lemme}
\label{mu_nef}
La classe $\mu^k$ est nef pour 
$k \in \{1, \dots, n\}$. En particulier, on a
\[\mu^k \cdot \Psef^{n-k}(A \times A) \subset \Nef^{n+k}(A \times A).\]
\end{lemme}

\begin{proof}
Par la d\'ecomposition (\ref{young_decomp_2})
et la proposition \ref{intersections}, il suffit de montrer que
le c\^one  $\Psef^{2n-2k}(A \times A)$ est contenu dans le demi-espace
\[H^+_{\mu^{n-k}}=\bigoplus_{0 \le 2i \le 2n-2k} \mu^{i} \cdot \Sym^{4(n-k)-4i}W \oplus \RR_+\mu^{n-k}.\]
Ayant
$ \Psef^{2n-2k}(A \times A) \subset \Semi^{2n-2k}(A \times A)$, il suffit de montrer
que l'on a $\Semi^{2n-2k}(A \times A) \subset H^+_{\mu^{n-k }}$.
Pour une classe $\alpha \in \N^{2n-2k}(A \times A)$, la matrice $b_\alpha$ repr\'esentant $B_\alpha$ se 
d\'ecompose en blocs correspondant \`a une  d\'ecomposition 
de $\bigwedge^{2n-2k} V_{\RR}$ en $\GL(W)$-modules irr\'eductibles (proposition \ref{prop_bilinear_decomp}). 
Comme $\det(W)^{\otimes n-k}$ est un module irr\'eductible apparaissant
dans une telle d\'ecomposition de $\bigwedge^{2n-2k} V_{\RR}$, on a des blocs $1 \times 1$
dans la matrice $b_\alpha$ correspondant au module irr\'eductible 
\\ $\Sym^2(\det(W)^{n-k})\simeq \RR \mu^{n-k}$ pour
tout $\alpha \in \N^{2n-2k}(A \times A)$. 
Si l'on \'ecrit $P_\alpha=\sum_{0 \le 2i \le 2n-2k} \mu^i P_i(\theta_1, \theta_2, \lambda)$ avec
$P_i \in \Sym^{4(n-k)-4i}W$, selon la d\'ecomposition
donn\'ee dans la proposition \ref{corol_decomp_N}, alors $P_{n-k}$ est une constante et 
l'application 
\begin{align*}
\RR\mu^{n-k} & \to \Sym^2(\det(W)^{n-k}) \\
P_{n-k} &\mapsto L(\alpha)
\end{align*}
 \'etant la multiplication par une constante r\'eelle $c \not =0$,
on a $L(\alpha)=cP_{n-k}$. Si $c > 0$, on a donc $\Semi^{2n-2k}(A \times A) \subset H^+_{\mu^{n-k}}$ et si $c<0$, on a
$\Semi^{2n-2k}(A \times A) \subset H^-_{\mu^{n-k}}$. Comme la classe $\theta_1^{n-k}\theta_2^{n-k}$
est semipositive, il suffit ainsi de montrer
\[\theta_1^{n-k}\theta_2^{n-k} \in H^+_{\mu^{n-k}}.\]
 Ecrivons $\theta_1^{n-k}\theta_2^{n-k}=\sum_{0 \le 2i \le 2n-2k} \mu^i P_i$ avec
$P_i \in \Sym^{4(n-k)-4i}W$, selon la d\'ecomposition
(\ref{young_decomp_2}). Alors $P_{n-k}$ est une constante et on veut d\'eterminer son signe.
Comme le morphisme surjectif $\Sym^{2n-2k}\N^1(A \times A) \to \N^{2n-2k}(A \times A)$ correspond \`a une projection sur des
facteurs irr\'eductibles de $\Sym^{2n-2k} \N^1(A \times A)$, et comme $\mu^{n-k}$ engendre un module irr\'eductible
 non nul dans $\N^{2n-2k}(A \times A)$ pour $k \le n$, on peut supposer que $A$ est de dimension $n-k$.
Or on a $\N^{2n-2k}(A \times A)=\RR\mu^{n-k}$, et $\mu^{n-k} P_{n-k}$ est donc juste un nombre d'intersection de
$2n-2k$ diviseurs effectifs, de sorte que l'on a $\mu^{n-k} P_{n-k} \ge 0$. 
Comme $\mu^{n-k} >0$ par la remarque \ref{intersections}, on obtient $P_{n-k} >0$, ce qui fournit le r\'esultat souhait\'e.
\end{proof}

\begin{thm}
\label{thm_inclusion_stricte_2}
Soit $A$ une vari\'et\'e ab\'elienne principalement polaris\'ee tr\`es g\'en\'erale de dimension $n \ge 2$. 
Les classes $\theta_1^k \mu \in \N^{k+2}(A \times A)$ et $\theta_1^{n-2}\theta_2^k\mu \in \N^{n+k}(A \times A)$
ne sont pas semipositives mais nefs pour $0 \le k \le n-2$, et l'on a
\begin{equation}
\label{eqn_psefinnef}
\Psef^k(A^e) \varsubsetneq \Nef^k(A^e)
\end{equation}
pour tout entier positif $e \ge 2$ et $2 \le k \le ne-2$.
\end{thm}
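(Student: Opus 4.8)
Here is the plan. In each codimension $2\le k\le ne-2$ I will exhibit a class in $\N^k(A^e)$ that is nef but not semipositive; since $\Psef^k(A^e)\subset\Semi^k(A^e)$ by (\ref{chain_cones}), such a class lies in $\Nef^k(A^e)\setminus\Psef^k(A^e)$ and gives the strict inclusion (\ref{eqn_psefinnef}). For $e=2$ the two families in the statement already cover everything: $\theta_1^k\mu$ has codimension $k+2\in\{2,\dots,n\}$ and $\theta_1^{n-2}\theta_2^k\mu$ has codimension $n+k\in\{n,\dots,2n-2\}$, so together they realise every codimension $c$ with $2\le c\le 2n-2$. For $e\ge 3$ I will enlarge these classes by effective monomials supported on the remaining factors, the codimension ranges matching because $2n-2+(e-2)n=ne-2$.

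Both families are nef: $\mu$ is nef (Lemme \ref{mu_nef} with $k=1$), while $\theta_1^k$ and $\theta_1^{n-2}\theta_2^k$ lie in $\Sym^\bullet\Psef^1(A\times A)\subset\Psef^\bullet(A\times A)$, so $\theta_1^k\mu,\ \theta_1^{n-2}\theta_2^k\mu\in\Nef^\bullet(A\times A)$ by Lemme \ref{lemme_inclusions}~(c). For non-semipositivity of $\theta_1^k\mu$, observe that by (\ref{k=2_second}) the matrix of $B_\mu$ on the block $\Sym^2 W$ is $\left(\begin{smallmatrix}0&0&-1\\0&2&0\\-1&0&0\end{smallmatrix}\right)$, which has eigenvalue $-1$. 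Applying Lemme \ref{lemme_mat_M} repeatedly (legitimate because the intermediate classes lie in codimensions $2,\dots,k+1\le n-1$, i.e. $k\le n-2$), the block $b'_{\theta_1^k\mu}$ on $\Sym^{k+2}W$ equals this $3\times 3$ matrix placed in the upper-left corner and padded by zero rows and columns; it therefore inherits the eigenvalue $-1$ and fails to be positive semidefinite. Hence $\theta_1^k\mu\notin\Semi^{k+2}(A\times A)$ for $0\le k\le n-2$.

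The crux is the non-semipositivity of $\theta_1^{n-2}\theta_2^k\mu=4\,\theta_1^{n-1}\theta_2^{k+1}-\theta_1^{n-2}\theta_2^k\lambda^2\in\N^{n+k}(A\times A)$. By the isomorphism $\N^{n+k}(A\times A)\xrightarrow{\ \sim\ }\Sym^2\!\big(\Sym^{n-k}W^*\otimes\det(W^*)^{\otimes k}\big)$ of Proposition \ref{prop_decomp} (valid since $0\le k\le n-2$), it suffices to show the image is not positive semidefinite. I would compute this $(n-k+1)\times(n-k+1)$ matrix by the procedure of Remarque \ref{rem_algorithme}, expanding $g\mapsto\det(g)^k\rho_{\Sym^{n-k}W}(g)$ and reading off the coefficients of $\theta_1^{n-1}\theta_2^{k+1}$ and $\theta_1^{n-2}\theta_2^k\lambda^2$. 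The expectation, which generalises the computation (\ref{k=4_second}) giving $\left(\begin{smallmatrix}0&0&-1\\0&16&0\\-1&0&0\end{smallmatrix}\right)$ for $n=3$, is that the extreme diagonal entries $(0,0)$ and $(n-k,n-k)$ vanish while the corner $(0,n-k)$ equals the $\lambda^2$-coefficient $-1$; the $2\times 2$ corner submatrix $\left(\begin{smallmatrix}0&-1\\-1&0\end{smallmatrix}\right)$ then has determinant $-1<0$ and forces a negative eigenvalue. The combinatorial verification that these corner entries have the claimed values for all $n$ and $k$ is the step I expect to be the main obstacle.

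Finally, for general $e\ge 2$ and $2\le k\le ne-2$, choose $c\in\{2,\dots,2n-2\}$ and integers $d_3,\dots,d_e\in\{0,\dots,n\}$ with $c+\sum_{i\ge3}d_i=k$, and let $\nu\in\N^c(A\times A)$ be one of the two nef, non-semipositive classes above. For the projection $p_{12}\colon A^e\to A_1\times A_2$, the pullback $p_{12}^*\nu$ is nef, since the projection formula gives $\langle p_{12}^*\nu,\beta\rangle=\langle\nu,(p_{12})_*\beta\rangle\ge0$ for every pseudoeffective $\beta$ of complementary codimension, and it is not semipositive by Lemme \ref{lemme_pullback_semi}~(1). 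Set $\gamma=p_{12}^*\nu\cdot\theta_3^{d_3}\cdots\theta_e^{d_e}\in\N^k(A^e)$, which is nef by Lemme \ref{lemme_inclusions}~(c). To see $\gamma$ is not semipositive, take a negative eigenvector $\xi$ of $H_{p_{12}^*\nu}$, supported on the first two factors (the form vanishes on wedges meeting the other factors), and for each $i\ge3$ a vector $\eta_i$ on the $i$-th factor with $H_{\theta_i^{d_i}}(\eta_i,\eta_i)>0$. Since these forms are supported on mutually orthogonal coordinate blocks, the exterior product of Hermitian forms (cf. Lemme \ref{equality_maps}) evaluated on $\xi\wedge\eta_3\wedge\cdots\wedge\eta_e$ factors as a positive multiple of $H_\nu(\xi,\xi)\prod_{i\ge3}H_{\theta_i^{d_i}}(\eta_i,\eta_i)<0$. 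Thus $\gamma\in\Nef^k(A^e)\setminus\Semi^k(A^e)\subset\Nef^k(A^e)\setminus\Psef^k(A^e)$, establishing (\ref{eqn_psefinnef}) for every $2\le k\le ne-2$.
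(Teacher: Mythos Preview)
Your treatment of nefness and of the first family $\theta_1^k\mu$ coincides with the paper's. The real issue is the second family $\theta_1^{n-2}\theta_2^k\mu$: you propose to compute the block of $B$ on $\det(W)^{\otimes k}\otimes\Sym^{n-k}W$ via Remarque~\ref{rem_algorithme} and conjecture the shape of the corner entries, but you flag this combinatorial verification as the ``main obstacle'' and leave it open. No analogue of Lemme~\ref{lemme_mat_M} is available for that block, so this route would require new work. The paper bypasses the block decomposition entirely and argues directly in the \emph{standard} basis of $\bigwedge^{n+k}V$. There $h_{\theta_1^{n-1}\theta_2^{k+1}}$ is diagonal with nonzero entries only at basis vectors having exactly $n-1$ coordinates from the first factor and $k+1$ from the second. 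One then observes that $h_{\theta_1^{n-2}\theta_2^k\lambda^2}$ has a nonzero \emph{off-diagonal} entry linking the basis vector $z_1\wedge\cdots\wedge z_n\wedge z_{n+1}\wedge\cdots\wedge z_{n+k}$ (which has $n$ first-factor coordinates) to another basis vector, while its diagonal entry at that same vector vanishes (since $\lambda^2$ cannot contribute two holomorphic \emph{and} two antiholomorphic indices all from the first factor). At that vector $h_{\theta_1^{n-1}\theta_2^{k+1}}$ is also zero, so $h_{\theta_1^{n-2}\theta_2^k\mu}$ has a $2\times2$ principal minor of shape $\left(\begin{smallmatrix}0&*\\ *&c\end{smallmatrix}\right)$ with $*\ne 0$, hence determinant $-|*|^2<0$. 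This is both elementary and complete, and it is what you are missing.

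For the passage to $e\ge3$ your construction $p_{12}^*\nu\cdot\theta_3^{d_3}\cdots\theta_e^{d_e}$ is genuinely different from the paper's argument, which proceeds by induction on $e$ (pulling back a nef non-semipositive class from $A^{e-1}$ via Lemme~\ref{lemme_pullback_semi}) combined with the duality $\Psef^k\leftrightarrow\Nef^{ne-k}$ to reach the upper half of the codimension range. Your approach has the merit of giving an explicit witness in every codimension at once without invoking duality; the factorisation $H_\gamma(\xi\wedge\eta_3\wedge\cdots\wedge\eta_e,\,\xi\wedge\eta_3\wedge\cdots\wedge\eta_e)=c\,H_\nu(\xi,\xi)\prod_i H_{\theta_i^{d_i}}(\eta_i,\eta_i)$ is correct because the $(c,c)$-form $p_{12}^*\nu$ and the $(d_i,d_i)$-forms $\theta_i^{d_i}$ involve pairwise disjoint sets of coordinate $1$-forms, so only the block-diagonal contractions survive. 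You should spell this out, but once the gap above is closed your variant of the endgame is a legitimate alternative to the paper's.
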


\begin{proof}
 Par la proposition \ref{mu_nef} il est clair 
que $\theta_1^k \mu $ et $\theta_1^{n-2}\theta_2^k\mu$ sont  des classes nefs 
pour $0 \le k \le n-2$.
Pour voir que $\theta_1^k \mu$ n'est pas semipositive pour $0 \le k \le n$, il suffit de remarquer que
la matrice $b_{\mu}'$ (cf. lemme \ref{lemme_mat_M}) n'est pas semipositive, ce qui entra\^ine par 
le lemme \ref{lemme_mat_M} que
$b_{\theta_1^k\cdot \mu}'$ n'est pas semipositive,
et donc $\theta_1^k \cdot \mu  \not \in \Semi^k(A \times A)$.

Pour voir que $\theta_2^k \theta_1^{n-2} \mu=4\theta_2^{k+1} \theta_1^{n-1}-\theta_2^k \theta_1^{n-2}\lambda^2$ n'est pas semipositive, 
on regarde la matrice $h_{\theta_2^k \theta_1^{n-2} \mu}$ repr\'esentant $H_{\theta_2^k \theta_1^{n-2} \mu}$
dans la base standard de $\bigwedge^k V$.
On montre qu'il y a un $2 \times 2$ mineur principal dont le d\'eterminant est n\'egatif.
La matrice $h_{\theta_2^{k+1} \theta_1^{n-1}}$ est une matrice avec des coefficients z\'eros hors
de la diagonale et des coefficients non z\'eros dans la diagonale pour les coordonn\'ees
$z_{i_1} \wedge \dots \wedge z_{i_{n-1}} \wedge z_{j_1} \wedge \dots \wedge z_{j_{k+1}}$ avec
$i_1, \dots, i_{n-1} \in \{1, \dots, n\}$ et $j_1, \dots, j_{k+1} \in \{n+1, \dots, 2n\}$. 
En m\^eme temps la matrice $h_{\theta_2^k \theta_1^{n-2}\lambda^2}$ contient un coefficient non nul
pour
\[z_{n+1} \wedge i \bar z_{n+1} \wedge \dots \wedge z_{n+k} \wedge i \bar z_{n+k} \wedge z_{1} 
	\wedge  i \bar z_{1} \wedge \dots \wedge z_{n-2} \wedge  i \bar z_{n-2}  \wedge z_{n+(n-1)} \wedge i \bar z_{n-1} \wedge z_{2n} \wedge i \bar z_n,\]
	qui n'est pas sur la diagonale, et comme on n'a pas de coefficients de $\theta_2^k \theta_1^{n-2}\lambda^2$ dans 
	la diagonale pour
\[z_{n+1} \wedge i \bar z_{n+1} \wedge \dots \wedge z_{n+k} \wedge i \bar z_{n+k} \wedge z_{1} 
	\wedge  i \bar z_{1} \wedge \dots \wedge z_{n} \wedge  i \bar z_{n} ,\]
cela entra\^ine le r\'esultat.

Montrons maintenant l'inclusion (\ref{eqn_psefinnef}). Par les arguments pr\'ec\'edents, on a
\[\Semi^k(A \times A) \varsubsetneq \Nef^k(A \times A)\]
 pour $2 \le k \le 2n-2$. Cela nous permet de raisonner par
r\'ecurrence sur $e$ pour $n$ fix\'e, en supposant que l'\'enonc\'e est vrai pour $e-1$. De plus, on peut se restreindre \`a le montrer
pour $2 \le k \le \lfloor \frac{ne}{2} \rfloor$ par dualit\'e.
Soit $\alpha \in \Nef^k(A^{e-1})$ une classe
nef non semipositive. Alors $p_{1, \dots, e-1}^*\alpha$ est nef et non semipositive
par le lemme \ref{lemme_pullback_semi}. Ayant $ne-2 \ge \lfloor \frac{ne}{2} \rfloor$ pour $n\ge 2, e\ge 2$, cela ach\`eve la d\'emonstration.
\end{proof}

\begin{rem}
Par  \cite[Prop. 3.2]{DELV}, on a des isomorphismes
$\cdot \mu^k: \N^{n-k}(A \times A) \to \N^{n+k}(A \times A)$ pour $1 \le k \le n$, et on se demande naturellement si les c\^ones de classes positives
respectifs sont pr\'eserv\'es, ce qui a \'et\'e v\'erifi\'e dans \cite{DELV} pour $k=n-1$. Par le th\'eor\`eme \ref{thm_inclusion_stricte_2}, 
on voit que, pour $n=3$, l'isomorphisme $\cdot \mu: \N^2(A \times A) \to \N^4(A \times A)$
ne pr\'eservent pas les classes pseudoeffectives, de sorte qu'en g\'en\'eral, on 
 ne peut pas s'attendre \`a ce que les c\^ones soient
pr\'eserv\'es.
\end{rem}

\begin{rem}
On montre \'egalement que les in\'equations d\'efinissant $\Nef^{2n-2}(A  \times A)$ ne d\'ependent pas 
de $n$, si l'on \'ecrit les in\'equations en fonction des coordonn\'ees associ\'ees \`a la base
\[\{\mu^{n-2} \cdot \theta_1^2, \mu^{n-2} \cdot \theta_1 \theta_2, \mu^{n-2} \cdot \theta_2^2, 
\mu^{n-2} \cdot \theta_1 \lambda, \mu^{n-2} \cdot \theta_2 \lambda, \mu^{n-2} \cdot \lambda^2\}\]
de $\N^{2n-2}(A \times A)$. Comme les in\'equations d\'efinissant $\Nef^2(A  \times A)$ ont 
\'et\'e calcul\'ees dans \cite{DELV} pour $n=2$, on obtient ainsi des in\'equations explicites d\'efinissant $\Nef^{2n-2}(A \times A)$
pour tout $n \ge 2$.

D'autre c\^ot\'e, les in\'equations d\'efinissant $\Nef^2(A \times A )$ d\'ependent de $n$ alors que
le c\^one $\Psef^2(A \times A) =\Semi^2(A \times A)$ ne d\'epend pas de $n$ pour $n \ge 2$.
\end{rem}

\begin{rem}
Tous les r\'esultats obtenus  pour une vari\'et\'e ab\'elienne principalement polaris\'ee tr\`es g\'en\'erale $A$ concernant la structure alg\'ebrique de $\N^\bullet(A^e)$ et
les c\^ones dans $\N^k(A^e)$
sont \'egalement vrais pour une vari\'et\'e ab\'elienne $A$ principalement polaris\'ee quelconque  
 si l'on se 
restreint \`a la $\RR$-alg\`ebre $\N^\bullet_{\can}(A^e) \subset \N^\bullet(A^e)$ engendr\'ee par
les $\theta_i$ et les $\lambda_{j,k}, 0 \le i \le e, 1\le j  <k \le e$.

Remarquons de plus qu'une isog\'enie $f:B \to B'$ entre deux vari\'et\'es ab\'eliennes induit un isomorphisme $f^*:\N^\bullet(B') \to \N^\bullet(B)$
qui pr\'eserve les c\^ones en question \cite[Prop. 1.6]{DELV}. Les r\'esultats pour $\N^\bullet(A^e)$ pour $A$ principalement polaris\'ee tr\`es g\'en\'erale sont
donc \'egalement vrais pour $\N^\bullet(B)$ si $B$ est isog\`ene \`a $A^e$.
\end{rem}

\addcontentsline{toc}{section}{R\'ef\'erences}

\bibliography{ab_eff}
\bibliographystyle{plain}

\end{document}